\def\rr{{\mathbb R}}
\def\rn{{{\rr}^n}}
\def\zz{{\mathbb Z}}
\def\nn{{\mathbb N}}
\def\cd{{\mathcal D}}
\def\cs{{\mathcal S}}
\def\mi{{\mathrm I}}
\def\fz{\infty}
\def\az{\alpha}
\def\bz{\beta}
\def\gz{{\gamma}}
\def\lz{\lambda}
\def\oz{{\omega}}
\def\vz{\varphi}
\def\lf{\left}
\def\r{\right}
\def\lfz{{\lfloor}}
\def\rfz{{\rfloor}}
\def\rf{\rfloor}
\def\la{\langle}
\def\ra{\rangle}
\def\hs{\hspace{0.25cm}}
\def\ls{\lesssim}
\def\noz{\nonumber}
\def\wz{\widetilde}
\def\st{\subset}
\def\com{\complement}
\def\bh{\backslash}
\def\dyt{\,\frac{dy\,dt}{t}}
\def\dt{\,\frac{dt}{t}}
\def\supp{\mathop\mathrm{\,supp\,}}
\def\loc{\mathop\mathrm{\,loc\,}}
\def\esup{\mathop\mathrm{\,esssup\,}}
\def\aa{{\mathbb A}}
\def\lv{{L^{\vz}(\rn)}}
\def\hv{{H^{\vz}(\rn)}}
\def\bmo{{\mathop\mathrm {BMO}}}
\newtheorem{thm}{Theorem}[section]
\newtheorem{prop}[thm]{Proposition}
\newtheorem{lem}[thm]{Lemma}
\newtheorem{cor}[thm]{Corollary}
\theoremstyle{definition}
\newtheorem{defn}[thm]{Definition}
\numberwithin{equation}{section}
\begin{document}

\arraycolsep=1pt

\title{\bf\Large New Real-Variable Characterizations of Musielak-Orlicz Hardy Spaces
\footnotetext{\hspace{-0.35cm} 2010 {\it
Mathematics Subject Classification}. Primary 42B25; Secondary 42B30, 42B35.
\endgraf {\it Key words and phrases}. Musielak-Orlicz function, Hardy space, atom,
maximal function, Littlewood-Paley $g$-function, Littlewood-Paley $g_\lambda^\ast$-function.
\endgraf Yiyu Liang and Dachun Yang are partially supported by 2010 Joint Research Project Between
China Scholarship Council and German Academic Exchange
Service (PPP) (Grant No. LiuJinOu [2010]6066). Jizheng Huang is supported by the National Natural
Science Foundation (Grant No. 11001002) of China.
Dachun Yang is also partially supported by the National
Natural Science Foundation (Grant No. 11171027) of China
and Program for Changjiang Scholars and Innovative
Research Team in University of China. Part of this paper was finished
during the course of the visit of Yiyu Liang and Dachun Yang to the Mathematisches Institut
of Friedrich-Schiller-Universit\"at Jena and they want to express their sincere
and deep thanks for the gracious hospitality of the the Research group
``Function spaces" therein.} }
\author{Yiyu Liang, Jizheng Huang and Dachun Yang\,\footnote{Corresponding author}}
\date{}

\maketitle


\begin{center}
\begin{minipage}{13.8cm}
{\small {\bf Abstract}\quad Let $\varphi: {\mathbb R^n}\times
[0,\infty)\to[0,\infty)$ be such that
$\varphi(x,\cdot)$ is an Orlicz function and $\varphi(\cdot,t)$ is
a Muckenhoupt $A_\infty({\mathbb R^n})$ weight. The Musielak-Orlicz Hardy
space $H^{\varphi}(\mathbb R^n)$ is defined to be the space
of all $f\in{\mathcal S}'({\mathbb R^n})$ such that the
grand maximal function $f^*$ belongs to the Musielak-Orlicz
space $L^\varphi(\mathbb R^n)$. Luong Dang Ky established its atomic
characterization. In this paper, the authors establish some new real-variable characterizations
of $H^{\varphi}(\mathbb R^n)$ in terms of the vertical or the non-tangential maximal
functions, or the Littlewood-Paley $g$-function or $g_\lambda^\ast$-function, via first establishing
a Musielak-Orlicz Fefferman-Stein vector-valued
inequality. Moreover, the range of $\lambda$ in the
$g_\lambda^\ast$-function characterization of $H^\varphi(\mathbb R^n)$
coincides with the known best results, when $H^\varphi(\mathbb R^n)$
is the classical Hardy space $H^p(\mathbb R^n)$, with $p\in (0,1]$,
or its weighted variant.}
\end{minipage}
\end{center}


\vspace{0.2cm}

\section{Introduction\label{s1}}

\hskip\parindent As the generalization of $L^p(\mathbb R^n)$, the
Orlicz space was introduced by Birnbaum-Orlicz in \cite{bo31}
and Orlicz in \cite{o32}. Since then, the theory of the Orlicz
spaces themselves has been well developed and these spaces have
been widely used in probability, statistics, potential theory,
partial differential equations, as well as harmonic analysis and
some other fields of analysis; see, for example, \cite{aikm00, io02,
mw08}. Moreover, Orlicz-Hardy spaces are also suitable
substitutes of Orlicz spaces in dealing with many problems of
analysis; see, for example, \cite{j80, s79, v87, jyz09,jy10,jy11}. Recall
that Orlicz-Hardy spaces and their dual spaces were first studied by
Str\"{o}mberg \cite{s79} and Janson \cite{j80} on
$\mathbb R^n$ and Viviani \cite{v87} on spaces of homogeneous
type in the sense of Coifman and Weiss \cite{cw71}.

Recently, Ky \cite{ky} introduced a new
\emph{Musielak-Orlicz Hardy space}, $\hv$, via the grand maximal
function, which generalizes both the Orlicz-Hardy space of
Str\"omberg \cite{s79} and Janson \cite{j80} and the weighted
Hardy space $H^p_\oz(\rn)$ with $\oz\in A_{\fz}(\rn)$ studied by
Garc\'\i a-Cuerva \cite{gr85} and Str\"omberg and Torchinsky
\cite{st89}, here and in what follows,
$A_q(\rn)$ with $q\in[1,\fz]$ denotes the
\emph{class of Muckenhoupt's weights} (see, for example,
\cite{g79,gr85} for their definitions and properties) and
we always assume that $\vz$ is a \emph{growth function},
which means that $\vz:\,\rn\times[0,\fz)\to[0,\fz)$ is a Musielak-Orlicz function such
that $\vz(x,\cdot)$ is an Orlicz function and $\vz(\cdot,t)$ is
a Muckenhoupt $A_\fz(\rn)$ weight. Musielak-Orlicz functions are the
natural generalization of Orlicz functions that may vary in the
spatial variables; see, for example, \cite{d05,dhr09,ky,m83}.
Recall that the motivation to study function spaces of
Musielak-Orlicz type comes from applications to elasticity, fluid
dynamics, image processing, nonlinear partial differential
equations and the calculus of variation; see, for example,
\cite{bg10,bgk,bijz07,d05,dhr09,ky}.

In \cite{ky}, Ky established the atomic characterization of $\hv$
and, moreover, Ky \cite{ky} further introduced the $\mathop\mathrm{BMO}$-type space
$\mathop\mathrm{BMO}_{\vz}(\rn)$, which was proved to be the dual
space of $\hv$; as an interesting application, Ky proved
that the class of pointwise multipliers for $\bmo(\rn)$,
characterized by Nakai and Yabuta \cite{ny85}, is the dual space
of $L^1(\rn)+H^{\log}(\rn)$, where $H^{\log}(\rn)$ is the Musielak-Orlicz Hardy
space related to the growth function
$$\vz(x,t):=\frac{t}{\log(e+|x|)+\log(e+t)}$$
for all $x\in\rn$ and $t\in [0,\fz)$. Furthermore, the Lusin
area function and the molecular characterizations of $\hv$ were obtained in
\cite{hyy11}. As an application of the Lusin
area function characterization of $\hv$, the $\varphi$-Carleson measure characterization
of $\mathop\mathrm{BMO}_{\varphi}(\mathbb{R}^n)$
was also given in \cite{hyy11}. It is worth noticing that
some special Musielak-Orlicz Hardy spaces appear naturally
in the study of the products of functions in $\bmo(\rn)$ and
$H^1(\rn)$ (see \cite{bgk,bijz07}), and the endpoint estimates for
the div-curl lemma and the commutators of singular integral operators
(see \cite{bfg10,bgk,ky2}).
Moreover, the local Musielak-Orlicz
Hardy space, $h^\vz(\rn)$, and its dual space, ${\rm bmo}_\vz(\rn)$,
were studied in \cite{dcyy} and
some applications of $h^\vz(\rn)$ and ${\rm bmo}_\vz(\rn)$,
to pointwise multipliers of BMO-type spaces and to
the boundedness of local Riesz transforms and pseudo-differential
operators on $h^\vz(\rn)$,
were also obtained in \cite{dcyy}.

In this paper, we establish some new real-variable characterizations
of $H^{\varphi}(\mathbb R^n)$ in terms of the vertical or the non-tangential maximal
functions, and
in terms of the Littlewood-Paley $g$-function or $g_\lambda^\ast$-function, via first establishing
a Musielak-Orlicz Fefferman-Stein vector-valued
inequality. Moreover, the range of $\lambda$ in the
$g_\lambda^\ast$-function characterization of $H^\varphi(\mathbb R^n)$
coincides with the known best results, when $H^\varphi(\mathbb R^n)$
is the classical Hardy space $H^p(\mathbb R^n)$, with $p\in (0,1]$,
or its weighted variant.

To be precise, this paper is organized as follows.

In Section \ref{s2}, we recall some
notions concerning growth functions and some
of their properties established in \cite{ky}.
Then via some skillful applications of these properties on growth functions,
such as their equivalent property  that
$$\vz(x,t)\sim\int_0^t\frac{\vz(x,s)}s\,ds\quad\text{for all}
\quad (x,t)\in \mathbb R^n\times[0,\infty)$$
(see Lemma \ref{lem1}(ii) below),
we establish an interpolation theorem of Musielak-Orlicz type
(see Theorem \ref{inter} below) and also a vector-valued version
(see Theorem \ref{inter-vector} below). As a corollary,
we immediately obtain a Musielak-Orlicz Fefferman-Stein vector-valued
inequality (see Theorem \ref{f-s} below), which plays a key role in establishing
the $g$-function characterization of $\hv$ in Section \ref{s4} below
and might also be very useful in some other applications including the further study
of function spaces of Musielak-Orlicz type, for example, Besov-type and Triebel-Lizorkin-type spaces.

Section \ref{s3} is devoted to establishing some maximal function
characterizations of $\hv$ in terms of the vertical and the non-tangential maximal
functions (see Theorem \ref{max-ch} below), via first obtaining
some key inequalities (see Theorem \ref{t3.1} below) involving
the grand, the vertical and the tangential Peetre-type maximal functions.

In Section \ref{s4}, by using the Lusin area function characterization of $H_\vz(\rn)$
established in \cite{hyy11}, we obtain the $g$-function characterization of $\hv$
(see Theorem \ref{g-ch} below).
To do so, except using the Musielak-Orlicz Fefferman-Stein vector-valued
inequality established in Theorem \ref{f-s} of this paper,
we also need to invoke the discrete Calder\'on reproducing
formula obtained by Lu and Zhu \cite[Theorem 2.1]{lz11}
and some key estimates from \cite[Lemmas 2.1 and 2.2]{lz11}
(see also the estimates \eqref{4.y1} and \eqref{4.16} below).
Moreover, by borrowing some ideas from Folland and Stein \cite{fs82}
and Aguilera and Segovia \cite{as77},
we further obtain the Littlewood-Paley $g_\lz^*$-function
characterization of $\hv$ for all $\lz\in(2q/p,\fz)$ (see Theorem \ref{gast-ch} below).
We point out that even when $\vz(x,t):=t^p$ for all $x\in\rn$ and $t\in(0,\fz)$,
or $\vz(x,t):=w(x)t^p$ for all $x\in\rn$ and $t\in(0,\fz)$,
with $p\in(0,1]$, $q\in[1,\fz)$ and $w\in A_q(\rn)$,
the range of $\lz$ is the known best possible; see, respectively,
\cite[p.\,221,\,Corollary (7.4)]{fs82} and \cite[Theorem 2]{as77}.
In this sense, the range of $\lz$ in Theorem \ref{gast-ch}
might also be the best possible.

We remark that the Littlewood-Paley function
characterizations of $H^\varphi(\mathbb{R}^n)$ have local variants,
which will be studied in a forthcoming paper; see \cite{dcyy}
for the definition of local Musielak-Orlicz Hardy spaces $h^\varphi(\mathbb{R}^n)$.

Finally we make some conventions on notation. Throughout the whole
paper, we denote by $C$ a \emph{positive constant} which is
independent of the main parameters, but it may vary from line to
line. We also use $C_{(\az,\bz,\cdots)}$ to denote a \emph{positive
constant} depending on the indicated parameters $\gz$, $\bz$,
$\cdots$. The {\it symbol} $A\ls B$ means that $A\le CB$. If $A\ls
B$ and $B\ls A$, then we write $A\sim B$. The {\it symbol} $\lfz
s\rfz$ for $s\in\rr$ denotes the maximal integer not more than $s$.
For any measurable subset $E$ of $\rn$, we denote by $E^\complement$ the {\it set}
$\rn\setminus E$ and by $\chi_{E}$ its \emph{characteristic function}.
For any cube $Q\st\rn$, we use $\ell(Q)$ to denote its side length.
We also set $\nn:=\{1,\,2,\,
\cdots\}$ and $\zz_+:=\nn\cup\{0\}$.
For any given function $g$ on $\rn$, if $\int_\rn g(x)\,dx\neq 0$,
we let $L_g:= -1$; otherwise, we let $L_g\in\zz_+$ be the {\it maximal integer} such that
$g$ has vanishing moments up to order $L_g$, namely,
$\int_{\rn}g(x)x^{\az}\,dx=0$ for all multi-indices $\az$ with
$|\az|\le L_g$.


\section{Preliminaries\label{s2}}

\hskip\parindent In Subsection \ref{s2.1}, we first recall some
notions concerning growth functions and some
of their properties established in \cite{ky}.
Then in Subsection \ref{s2.2} we establish an interpolation theorem of Musielak-Orlicz type
and also a vector-valued version. As a corollary,
we obtain a Musielak-Orlicz type Fefferman-Stein vector-valued
inequality. In Subsection \ref{s2.3}, we recall the notion of
Musielak-Orlicz Hardy spaces and some of their known properties.

\subsection{Growth functions\label{s2.1}}

\hskip\parindent Recall that a function
$\Phi:[0,\fz)\to[0,\fz)$ is called an \emph{Orlicz function} if it
is nondecreasing, $\Phi(0)=0$, $\Phi(t)>0$ for $t\in(0,\fz)$ and
$\lim_{t\to\fz}\Phi(t)=\fz$ (see, for example,
\cite{m83,rr91,rr02}). The function $\Phi$ is said to be of
\emph{upper type $p$} (resp. \emph{lower type $p$}) for some $p\in[0,\fz)$, if
there exists a positive constant $C$ such that for all
$t\in[1,\fz)$ (resp. $t\in[0,1]$) and $s\in[0,\fz)$,
$\Phi(st)\le Ct^p \Phi(s).$

For a given function $\vz:\,\rn\times[0,\fz)\to[0,\fz)$ such that for
any $x\in\rn$, $\vz(x,\cdot)$ is an Orlicz function,
$\vz$ is called to be of \emph{uniformly upper type $p$} (resp.
\emph{uniformly lower type $p$}) for some $p\in[0,\fz)$ if there
exists a positive constant $C$ such that for all $x\in\rn$,
$t\in[0,\fz)$ and $s\in[1,\fz)$ (resp. $s\in[0,1]$), $\vz(x,st)\le Cs^p\vz(x,t)$.
Let
\begin{equation}\label{2.1}
i(\vz):=\sup\{p\in(0,\fz):\ \vz\ \text{is of uniformly lower
type}\ p\}.
\end{equation}
Observe that $i(\vz)$ may not be attainable, namely, $\vz$ may not be of uniformly
lower type $i(\vz)$; see below for some examples.

Let $\vz:\rn\times[0,\fz)\to[0,\fz)$ satisfy that
$x\mapsto\vz(x,t)$ is measurable for all $t\in[0,\fz)$. Following
\cite{ky}, $\vz(\cdot,t)$ is called \emph{uniformly locally
integrable} if, for all compact sets $K$ in $\rn$,
$$\int_{K}\sup_{t\in(0,\fz)}\lf\{|\vz(x,t)|
\lf[\int_{K}|\vz(y,t)|\,dy\r]^{-1}\r\}\,dx<\fz.$$

\begin{defn}\label{d2.1}
Let $\vz:\rn\times[0,\fz)\to[0,\fz)$ be uniformly locally
integrable. The function $\vz(\cdot,t)$ is said to satisfy the
\emph{uniformly Muckenhoupt condition for some $q\in[1,\fz)$},
denoted by $\vz\in\aa_q(\rn)$, if, when $q\in (1,\fz)$,
\begin{equation}\label{2.2}
\aa_q (\vz):=\sup_{t\in
[0,\fz)}\sup_{B\subset\rn}\frac{1}{|B|^q}\int_B
\vz(x,t)\,dx \lf\{\int_B
[\vz(y,t)]^{-q'/q}\,dy\r\}^{q/q'}<\fz,
\end{equation}
where $1/q+1/q'=1$, or
\begin{equation*}
\aa_1 (\vz):=\sup_{t\in [0,\fz)}
\sup_{B\subset\rn}\frac{1}{|B|}\int_B \vz(x,t)\,dx
\lf(\esup_{y\in B}[\vz(y,t)]^{-1}\r)<\fz.
\end{equation*}
Here the first supremums are taken over all $t\in[0,\fz)$ and the
second ones over all balls $B\subset\rn$.
\end{defn}

Recall that $\aa_q(\rn)$ with $q\in[1,\fz)$ in Definition
\ref{d2.1} was introduced by Ky \cite{ky}.
We have the following properties for $\aa_q(\rn)$ with $q\in [1,\fz)$, whose proofs
are similar to those in \cite{gr85,g09}.

\begin{lem}\label{l2.x1}
$\mathrm{(i)}$ $\aa_1(\rn)\subset\aa_p(\rn)\subset\aa_q(\rn)$ for
$1\le p\le q<\fz$.

$\mathrm{(ii)}$ If $\vz\in\aa_p(\rn)$ with $p\in(1,\fz)$, then
there exists $q\in(1,p)$ such that $\vz\in\aa_q(\rn)$.
%
\end{lem}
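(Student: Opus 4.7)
The plan is to adapt the classical proofs of the corresponding properties of Muckenhoupt $A_q$ weights (as in, e.g., \cite{gr85,g09}) to the uniformly-in-$t$ setting. Since $\aa_q(\vz)=\sup_{t\in[0,\fz)}A_q(\vz(\cdot,t))$, where $A_q$ denotes the classical Muckenhoupt characteristic, the overall strategy is to verify that every inequality in the classical arguments has its constant controlled only by $\aa_p(\vz)$, and then take the supremum over $t$.

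For part (i), the inclusion $\aa_1(\rn)\subset\aa_p(\rn)$ is essentially immediate. Fix $t\in[0,\fz)$ and a ball $B\subset\rn$, and set $m:=\essinf_{y\in B}\vz(y,t)$. Then $\vz(y,t)^{-1/(p-1)}\le m^{-1/(p-1)}$ for almost every $y\in B$, so
\[
\lf(|B|^{-1}\int_B\vz(y,t)^{-1/(p-1)}\,dy\r)^{p-1}\le m^{-1},
\]
and multiplying by $|B|^{-1}\int_B\vz(y,t)\,dy$ and invoking the $\aa_1$ condition controls the product by $\aa_1(\vz)$ uniformly in $t$ and $B$. For $\aa_p(\rn)\subset\aa_q(\rn)$ with $1<p\le q$, one writes $\vz^{-1/(q-1)}=(\vz^{-1/(p-1)})^{(p-1)/(q-1)}$ and applies Jensen's inequality to the concave function $s\mapsto s^{(p-1)/(q-1)}$, which yields
\[
\lf(|B|^{-1}\int_B\vz(y,t)^{-1/(q-1)}\,dy\r)^{q-1}\le\lf(|B|^{-1}\int_B\vz(y,t)^{-1/(p-1)}\,dy\r)^{p-1},
\]
and hence $\aa_q(\vz)\le\aa_p(\vz)$ after multiplying by $|B|^{-1}\int_B\vz(y,t)\,dy$ and taking the supremum.

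Part (ii), the self-improving (``open-ended'') property of $A_p$, is the main obstacle, since it requires more than pointwise-in-$t$ manipulation. The plan is to establish a \emph{uniform-in-$t$ reverse H\"older inequality} for the dual weight $\vz(\cdot,t)^{-1/(p-1)}$: there exist $\ez>0$ and $C>0$, depending only on $n$, $p$ and $\aa_p(\vz)$, such that for every ball $B\subset\rn$ and every $t\in[0,\fz)$,
\[
\lf(|B|^{-1}\int_B\vz(y,t)^{-(1+\ez)/(p-1)}\,dy\r)^{1/(1+\ez)}\le C\,|B|^{-1}\int_B\vz(y,t)^{-1/(p-1)}\,dy.
\]
I would obtain this via the standard Calder\'on--Zygmund stopping-time decomposition at levels above the average of $\vz(\cdot,t)^{-1/(p-1)}$ on $B$; the crucial uniformity in $t$ comes from the fact that every step only uses the quantitative $\aa_p(\vz)$ bound applied at a single value of $t$, so the resulting $\ez$ and $C$ are independent of $t$. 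Having secured such an $\ez$, I would choose $q\in(1,p)$ by $(q-1)(1+\ez)=p-1$, so that $\vz(y,t)^{-1/(q-1)}=\vz(y,t)^{-(1+\ez)/(p-1)}$. The reverse H\"older inequality above then translates directly into
\[
\lf(|B|^{-1}\int_B\vz(y,t)^{-1/(q-1)}\,dy\r)^{q-1}\le C\lf(|B|^{-1}\int_B\vz(y,t)^{-1/(p-1)}\,dy\r)^{p-1},
\]
and multiplying by $|B|^{-1}\int_B\vz(y,t)\,dy$ and taking the supremum over $B$ and $t$ yields $\aa_q(\vz)\le C\aa_p(\vz)<\fz$, which is (ii).
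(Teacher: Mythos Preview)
Your proposal is correct and is precisely the approach the paper intends: the paper does not give a proof but simply states that the arguments are ``similar to those in \cite{gr85,g09}'', i.e., the classical Muckenhoupt-weight proofs carried out pointwise in $t$ with constants depending only on the uniform characteristic $\aa_p(\vz)$. Your sketch of part (i) via Jensen's inequality and of part (ii) via a reverse H\"older inequality with constants controlled by $\aa_p(\vz)$ (hence uniform in $t$) is exactly that adaptation.
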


Let $\aa_{\fz}(\rn):=\cup_{q\in[1,\fz)}\aa_{q}(\rn)$
and define the \emph{critical index}, $q(\vz)$,
of $\vz\in\aa_{\fz}(\rn)$ by
\begin{equation}\label{2.3}
q(\vz):=\inf\lf\{q\in[1,\fz):\ \vz\in\aa_{q}(\rn)\r\}.
\end{equation}
By Lemma \ref{l2.x1}(ii), we see that if $q(\vz)\in(1,\fz)$,
then $\vz\not\in\aa_{q(\vz)}(\rn)$. Moreover, there exists
$\vz\not\in\aa_1(\rn)$ such that $q(\vz)=1$ (see, for example, \cite{jn87}).

Now we introduce the notion of growth functions.

\begin{defn}\label{d2.2}
A function $ \varphi:\mathbb R^n\times[0,\infty)\to[0,\infty)$
is called a \emph{growth function} if
the following conditions are satisfied:
\vspace{-0.25cm}
\begin{enumerate}
\item[(i)] $\vz$ is a \emph{Musielak-Orlicz function}, namely,
\vspace{-0.2cm}
\begin{enumerate}
    \item[(i)$_1$] the function $\vz(x,\cdot):[0,\fz)\to[0,\fz)$ is an
    Orlicz function for all $x\in\rn$;
    \vspace{-0.2cm}
    \item [(i)$_2$] the function $\vz(\cdot,t)$ is a measurable
    function for all $t\in[0,\fz)$.
\end{enumerate}
\vspace{-0.25cm} \item[(ii)] $\vz\in \aa_{\fz}(\rn)$.
\vspace{-0.25cm} \item[(iii)] $\vz$ is of positive
uniformly lower type $p$ for some $p\in(0,1]$ and of uniformly
upper type 1.
\end{enumerate}
\end{defn}

Clearly, $\vz(x,t):=\oz(x)\Phi(t)$ is a growth function if
$\oz\in A_{\fz}(\rn)$ and $\Phi$ is an Orlicz function of lower
type $p$ for some $p\in(0,1]$ and of upper type 1. It is known
that, for $p\in(0,1]$, if $\Phi(t):=t^p$ for all $t\in [0,\fz)$,
then $\Phi$ is an Orlicz function of lower type $p$ and of upper type $p$;
for $p\in[\frac{1}{2},1]$, if
$\Phi(t):= t^p/\ln(e+t)$ for all $t\in [0,\fz)$, then $\Phi$ is an
Orlicz function of lower type $q$ for $q\in(0, p)$ and of upper type $p$; for
$p\in(0,\frac{1}{2}]$, if $\Phi(t):=t^p\ln(e+t)$ for all $t\in
[0,\fz)$, then $\Phi$ is an Orlicz function of lower type $p$ and of upper type $q$
for $q\in(p,1]$. Recall that if an Orlicz function is of upper
type $p\in(0,1)$, then it is also of upper type 1. Another typical and useful growth
function is
$$\vz(x,t):=\frac{t^{\az}}{[\ln(e+|x|)]^{\bz}+[\ln(e+t)]^{\gz}}$$
for all $x\in\rn$ and $t\in[0,\fz)$, with any $\az\in(0,1]$,
$\bz\in[0,\fz)$ and $\gz\in [0,2\az(1+\ln2)]$; more precisely,
$\vz\in \aa_1(\rn)$, $\vz$ is of uniformly upper type $\az$ and
$i(\vz)=\az$ which is not attainable (see \cite{ky}).

Throughout the whole paper, we \emph{always
assume that $\vz$ is a growth function} as in Definition
\ref{d2.2}. Let us now introduce the Musielak-Orlicz space.

The \emph{Musielak-Orlicz space $L^{\vz}(\rn)$} is defined to be the space
of all measurable functions $f$ such that
$\int_{\rn}\vz(x,|f(x)|)\,dx<\fz$ with \emph{Luxembourg
norm}
$$\|f\|_{L^{\vz}(\rn)}:=\inf\lf\{\lz\in(0,\fz):\ \int_{\rn}
\vz\lf(x,\frac{|f(x)|}{\lz}\r)\,dx\le1\r\}.$$
In what follows, for any measurable subset $E$ of $\rn$, we denote
$\int_E\vz(x,t)\,dx$ by the \emph{symbol $\vz(E,t)$} for any $t\in[0,\fz)$.

The following Lemmas \ref{lem1}, \ref{lem2} and \ref{lem3} on the properties of growth functions are,
respectively, \cite[Lemmas 4.1, 4.2 and 4.3]{ky}.

\begin{lem}\label{lem1}
{\rm(i)} Let $\vz$ be a growth function. Then $\vz$ is uniformly
$\sigma$-quasi-subadditive on $\rn\times[0,\fz)$, namely, there
exists a positive constant $C$ such that for all
$(x,t_j)\in\rn\times[0,\fz)$ with $j\in\nn$,
$$\vz\lf(x,\sum_{j=1}^{\fz}t_j\r)\le C\sum_{j=1}^{\fz}\vz(x,t_j).$$

{\rm(ii)} Let $\vz$ be a growth function and
$$\wz{\vz}(x,t):=\int_0^t\frac{\vz(x,s)}{s}\,ds\quad {\rm for\ all}\ \
(x,t)\in\rn\times[0,\fz).$$
Then $\wz{\vz}$ is a growth function,
which is equivalent to $\vz$; moreover, $\wz{\vz}(x,\cdot)$ is
continuous and strictly increasing.
\end{lem}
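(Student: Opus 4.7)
\textbf{Proof plan for Lemma \ref{lem1}.}

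For part~(i), the key ingredient is the uniformly upper type~$1$ property of $\vz$: there exists $C_0>0$ such that $\vz(x,sr)\le C_0 s\,\vz(x,r)$ for all $x\in\rn$, $r\ge 0$ and $s\ge 1$. Given $\{t_j\}_{j\in\nn}\st[0,\fz)$, set $T:=\sum_{j=1}^{\fz}t_j$; I may assume $T<\fz$ (otherwise the inequality reduces, via monotonicity and left-continuity of $\vz(x,\cdot)$, to the finite-sum case and a limiting argument). For each $j$ with $t_j>0$ one has $T/t_j\ge 1$, so
$$\vz(x,T)=\vz\lf(x,\tfrac{T}{t_j}\,t_j\r)\le C_0\,\tfrac{T}{t_j}\,\vz(x,t_j),$$
i.e.\ $t_j\,\vz(x,T)\le C_0 T\,\vz(x,t_j)$. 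Summing in $j$ and dividing by $T$ yields $\vz(x,T)\le C_0\sum_j\vz(x,t_j)$, as desired.

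For part~(ii), the strategy is to prove the pointwise equivalence $\wz\vz\sim\vz$ and then invoke it to transfer every growth-function property. The upper bound $\wz\vz(x,t)\le (C_1/p)\vz(x,t)$ is a direct consequence of uniformly lower type $p\in(0,1]$: for $0<s\le t$, writing $s=(s/t)t$ with $s/t\le 1$ gives $\vz(x,s)\le C_1(s/t)^p\vz(x,t)$, and hence
$$\wz\vz(x,t)=\int_0^t\frac{\vz(x,s)}{s}\,ds\le\frac{C_1\vz(x,t)}{t^p}\int_0^t s^{p-1}\,ds=\frac{C_1}{p}\vz(x,t).$$
For the reverse bound, I would restrict the integral to $[t/2,t]$, use monotonicity of $\vz(x,\cdot)$ to dominate the integrand below by $\vz(x,t/2)/s$, and then apply uniformly upper type~$1$ in the form $\vz(x,t)=\vz(x,2\cdot t/2)\le 2C_0\vz(x,t/2)$ to conclude
$$\wz\vz(x,t)\ge\vz(x,t/2)\int_{t/2}^t\frac{ds}{s}=(\ln 2)\vz(x,t/2)\ge\frac{\ln 2}{2C_0}\vz(x,t).$$

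Once $\wz\vz\sim\vz$ is established with constants independent of $(x,t)$, the uniformly upper/lower type conditions, the uniform Muckenhoupt condition $\wz\vz(\cdot,t)\in\aa_\fz(\rn)$, and the defining Orlicz-function conditions all transfer from $\vz$ to $\wz\vz$ by inserting the two-sided bounds into Definition~\ref{d2.1} and the definitions in Subsection~\ref{s2.1}. Continuity and strict monotonicity of $\wz\vz(x,\cdot)$ are then immediate from its definition as the indefinite integral of the strictly positive, locally integrable function $s\mapsto\vz(x,s)/s$ on $(0,\fz)$; local integrability at $0$ is ensured by uniformly lower type $p$ via $\vz(x,s)/s\le C_1\vz(x,1)s^{p-1}$ and $p-1>-1$. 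The main obstacle is purely a matter of bookkeeping: one must check that the supremum over $t\in[0,\fz)$ built into the uniform $\aa_q$-characteristic \eqref{2.2} survives the pointwise equivalence, which it does precisely because the equivalence constants above are independent of both $x$ and $t$ and so factor cleanly out of the $A_q$ integrand.
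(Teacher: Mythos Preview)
Your proof is correct. Note, however, that the paper does not itself prove this lemma: it simply cites it as \cite[Lemma~4.1]{ky}, so there is no in-paper argument to compare against. Your approach to part~(i) via the uniformly upper type~$1$ inequality, and to part~(ii) via the two-sided pointwise bound obtained from the lower type~$p$ and upper type~$1$ conditions, is the standard route and is almost certainly what appears in \cite{ky}. One small remark on part~(i): in the case $T=\sum_j t_j=\infty$, rather than appealing to left-continuity (which is not part of the definition of an Orlicz function here), it is cleaner to observe directly that uniformly upper type~$1$ forces $\vz(x,t_j)\ge C_0^{-1}t_j\,\vz(x,1)$ for each $t_j\in(0,1]$ and $\vz(x,t_j)\ge\vz(x,1)$ for $t_j\ge 1$, so $\sum_j\vz(x,t_j)=\infty$ whenever $\sum_j t_j=\infty$, making the inequality trivial in that case.
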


\begin{lem}\label{lem2}
Let $\varphi$ be a growth function. Then

{\rm (i)} For all $f\in
L^\varphi(\mathbb R^n)\setminus\{0\}$,
$$\int_{\mathbb R^n}\varphi\lf(x,\frac{|f(x)|}{\|f\|_{L^\varphi(\rn)}}\r)=1.$$

{\rm (ii)} $\lim_{k\to\infty}\|f_k\|_{L^\varphi(\rn)}=0$
if and only if
$$\lim_{k\to\infty}\int_{\mathbb
R^n}\varphi(x,|f_k(x)|)dx=0.$$
\end{lem}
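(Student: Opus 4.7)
My plan for (i) is to introduce $F(\lambda):=\int_{\rn}\vz(x,|f(x)|/\lambda)\,dx$ for $\lambda\in(0,\fz)$, so that $\lambda_0:=\|f\|_{L^\vz(\rn)}=\inf\{\lambda>0:F(\lambda)\le1\}$. Since $\vz(x,\cdot)$ is nondecreasing, $F$ is nonincreasing. The definition of infimum, combined with this monotonicity, yields $F(\lambda)\le1$ for every $\lambda>\lambda_0$ (pick $\mu\in[\lambda_0,\lambda)$ with $F(\mu)\le1$) and $F(\lambda)>1$ for every $\lambda<\lambda_0$. The work is then to establish that $F$ is continuous at $\lambda_0$.

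For the inequality $F(\lambda_0)\le 1$, I would choose $\lambda_k\downarrow\lambda_0$; then $|f|/\lambda_k\uparrow|f|/\lambda_0$, so $\vz(x,|f(x)|/\lambda_k)$ increases pointwise to $\vz(x,|f(x)|/\lambda_0)$ and the monotone convergence theorem gives $F(\lambda_0)=\lim_k F(\lambda_k)\le1$. For the reverse inequality $F(\lambda_0)\ge1$, I would choose $\lambda_k\uparrow\lambda_0$; here $\vz(x,|f(x)|/\lambda_k)$ decreases to $\vz(x,|f(x)|/\lambda_0)$, so I need an integrable majorant. The uniformly upper type $1$ property of $\vz$ supplies it: since $\lambda_0/\lambda_1>1$, it gives $\vz(x,|f(x)|/\lambda_1)\le C(\lambda_0/\lambda_1)\,\vz(x,|f(x)|/\lambda_0)$, so $F(\lambda_1)\le C(\lambda_0/\lambda_1)F(\lambda_0)\le C(\lambda_0/\lambda_1)<\fz$. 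The dominated convergence theorem then yields $F(\lambda_0)=\lim_kF(\lambda_k)\ge1$. Combining the two inequalities proves (i).

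For (ii), both directions reduce to a direct application of the type estimates together with (i). In the forward direction, set $\epsilon_k:=\|f_k\|_{L^\vz(\rn)}$; for $k$ large, $\epsilon_k\in(0,1)$, and the uniformly lower type $p$ estimate applied with scaling factor $\epsilon_k$ gives $\vz(x,|f_k(x)|)\le C\epsilon_k^p\,\vz(x,|f_k(x)|/\epsilon_k)$, whence by part (i), $\int_{\rn}\vz(x,|f_k(x)|)\,dx\le C\epsilon_k^p\to0$. In the reverse direction, fix $\delta\in(0,1)$; the uniformly upper type $1$ estimate yields $\vz(x,|f_k(x)|/\delta)\le C\delta^{-1}\vz(x,|f_k(x)|)$, so the hypothesis forces $\int_{\rn}\vz(x,|f_k(x)|/\delta)\,dx\to0$, hence this integral is $\le1$ for large $k$, and therefore $\|f_k\|_{L^\vz(\rn)}\le\delta$. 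Since $\delta$ is arbitrary, $\|f_k\|_{L^\vz(\rn)}\to0$.

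The main obstacle is the continuity of $F$ at $\lambda_0$ from the left in part (i), which a priori could fail if $F$ were infinite on every left neighborhood of $\lambda_0$. This potential pathology is ruled out precisely by the uniformly upper type $1$ property, which is built into the definition of a growth function; without it, the bound $F(\lambda_1)<\fz$ above would not be available. Once this finiteness is secured, both the monotone convergence and dominated convergence steps are routine.
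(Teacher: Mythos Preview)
The paper does not prove this lemma; it simply cites \cite[Lemma~4.2]{ky}. So there is no in-paper argument to compare against, and your write-up is in fact more than what the paper provides.

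Your argument for (ii) is clean and correct. Your argument for (i) is the standard one and works, but it silently uses a hypothesis that is not part of the paper's definition of a growth function: continuity of $t\mapsto\vz(x,t)$. In your $F(\lambda_0)\le1$ step you assert that $\vz(x,|f(x)|/\lambda_k)\uparrow\vz(x,|f(x)|/\lambda_0)$ as $\lambda_k\downarrow\lambda_0$, which is left-continuity of $\vz(x,\cdot)$; in your $F(\lambda_0)\ge1$ step you assert the analogous right-continuity. Neither follows from the bare definition (Orlicz functions here are only assumed nondecreasing), and in fact (i) can fail for a discontinuous growth function: take $\vz(x,t)=\Phi(t)$ with $\Phi(t)=2^k$ for $t\in[2^k,2^{k+1})$, $k\in\zz$, and $f=\chi_E$ with $|E|=1$; then $\|f\|_{L^\vz(\rn)}=1/2$ but $\int_{\rn}\vz(x,2|f(x)|)\,dx=\Phi(2)=2\neq1$.

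The resolution intended in this paper is Lemma~\ref{lem1}(ii): one replaces $\vz$ by the equivalent $\wz\vz(x,t)=\int_0^t\vz(x,s)\,s^{-1}\,ds$, for which $\wz\vz(x,\cdot)$ is continuous and strictly increasing, and then your monotone/dominated convergence steps go through verbatim. You should make this reduction explicit (and note that after it, the conclusion of (i) is for the modified $\wz\vz$, which is how the lemma is used downstream). Your identification of the left-continuity of $F$ as the crux is on target; the point is that it is the continuity of $\vz(x,\cdot)$, not merely the finiteness of $F$ near $\lambda_0$, that is doing the work.
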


\begin{lem}\label{lem3}
For a given positive constant $\wz C$, there exists a positive constant
$C$ such that the following hold:

{\rm (i)} The inequality
$$\int_{\mathbb R^n}\varphi\lf(x,\frac{|f(x)|}{\lambda}\r)dx\le \wz C\quad \text{for}\quad \lambda\in (0,\fz)$$
implies that $ \|f\|_{L^\varphi(\rn)}\le C\lambda.$

{\rm (ii)} The inequality
$$\sum_j \varphi\lf(Q_j,\frac{t_j}{\lambda}\r)\le \wz C\quad \text{for}\quad \lambda\in (0,\fz)$$
implies that
$$
\inf\lf\{\alpha>0:\ \sum_j\varphi\lf(Q_j,\frac{t_j}{\alpha}\r)\le1\r\}\le
C\lambda,
$$
where $\{t_j\}_j$ is a sequence of positive constants and
$\{Q_j\}_j$ a sequence of cubes.
\end{lem}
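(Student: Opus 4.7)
The plan is to exploit the uniformly lower type $p$ property of $\vz$ for some $p\in(0,1]$ (guaranteed by Definition \ref{d2.2}(iii)), which states that there exists a positive constant $C_0$ such that $\vz(x,st)\le C_0 s^p\vz(x,t)$ for all $x\in\rn$, $t\in[0,\fz)$ and $s\in[0,1]$. Combined with the definition of the Luxembourg norm, this allows one to convert any hypothesis of the form ``some weighted integral of $\vz$ is at most $\wz C$'' into the corresponding inequality with $1$ on the right-hand side by inflating $\lz$ by a constant factor depending only on $\wz C$, $C_0$ and $p$.

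For part (i), I would fix a constant $C\ge 1$ (to be chosen) and apply the uniformly lower type $p$ estimate with scale factor $s=1/C\in(0,1]$ and argument $t=|f(x)|/\lz$, obtaining pointwise
\[
\vz\lf(x,\frac{|f(x)|}{C\lz}\r)\le C_0 C^{-p}\vz\lf(x,\frac{|f(x)|}{\lz}\r)\quad\text{for each } x\in\rn.
\]
Integrating over $\rn$ and using the hypothesis gives $\int_{\rn}\vz(x,|f(x)|/(C\lz))\,dx\le C_0 C^{-p}\wz C$. Choosing $C:=\max\{1,(C_0\wz C)^{1/p}\}$ forces the right-hand side to be at most $1$, so by the definition of the Luxembourg norm one concludes $\|f\|_{\lv}\le C\lz$.

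Part (ii) follows by the same device applied locally. With the same choice of $C$ as above, the uniformly lower type $p$ property with $s=1/C$ and $t=t_j/\lz$ gives $\vz(x,t_j/(C\lz))\le C_0 C^{-p}\vz(x,t_j/\lz)$ for each $x$; integrating over $Q_j$ and summing in $j$ yields $\sum_j\vz(Q_j,t_j/(C\lz))\le C_0 C^{-p}\wz C\le 1$. Hence $\alpha=C\lz$ lies in the set defining the infimum, so the infimum is bounded by $C\lz$.

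Since both parts reduce to a single application of the uniformly lower type $p$ condition, the argument is short and presents no real obstacle. The only point to verify is that $C$ depends solely on $\wz C$, $C_0$ and $p$, and not on $f$, $\lz$, the cubes $Q_j$ or the scalars $t_j$, which is manifest from the construction.
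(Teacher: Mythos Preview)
Your argument is correct: the uniformly lower type $p$ property of $\vz$ (Definition \ref{d2.2}(iii)) yields the pointwise inequality $\vz(x,t/(C\lz))\le C_0 C^{-p}\vz(x,t/\lz)$ for any $C\ge 1$, and the choice $C:=\max\{1,(C_0\wz C)^{1/p}\}$ reduces the bound $\wz C$ to $1$ in both (i) and (ii). The paper does not give its own proof of this lemma but simply quotes it as \cite[Lemma 4.3]{ky}; your argument is precisely the standard one used there, so there is nothing to compare.
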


\subsection{The Musielak-Orlicz Fefferman-Stein vector-valued
inequality\label{s2.2}}

\hskip\parindent
In this subsection, we establish an interpolation theorem of
operators, in the spirit of the Marcinkiewicz
interpolation theorem,
associated with a growth function,
which may have independent interest.
In what follows, for any nonnegative locally integrable function
$w$ on $\rn$ and $p\in (0,\fz)$, the \emph{space $L^p_w(\rn)$} is defined
to be the space of all measurable functions $f$ such that
$$\|f\|_{L^p_w(\rn)}:=\lf\{\int_\rn|f(x)|^pw(x)\,dx\r\}^{1/p}<\fz.$$

\begin{thm}\label{inter}
Let $p_{1},p_2\in (0,\fz)$, $p_{1}<p_{2}$ and $\vz$ be a
Musielak-Orlicz function with uniformly lower type $p_\vz^-$ and
uniformly upper type $p_\vz^+$.
If $0<p_{1}<p_\vz^-\le p_\vz^+<p_{2}<\fz$
and $T$ is a sublinear
operator defined on $L^{p_1}_{\vz(\cdot,1)}(\rn)+L^{p_2}_{\vz(\cdot,1)}(\rn)$
satisfying that for $i\in\{1,2\}$, all $\az\in(0,\fz)$ and $t\in(0,\fz)$,
\begin{eqnarray}\label{2.x1}
\vz(\{x\in\rn:\ |Tf(x)|>\az\},t)\le C_i\az^{-p_i}\int_\rn|f(x)|^{p_i}\vz(x,t)\,dx,
\end{eqnarray}
where $C_i$ is a positive constant independent of $f$,
$t$ and $\az$. Then $T$ is bounded on
$L^\vz(\rn)$ and, moreover, there exists a positive constant $C$
such that for all $f\in L^\vz(\rn)$,
\begin{eqnarray*}
\int_\rn\vz(x,|Tf(x)|)\,dx\le C\int_\rn\vz(x,|f(x)|)\,dx.
\end{eqnarray*}
\end{thm}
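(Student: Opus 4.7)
The plan is to adapt the classical Marcinkiewicz interpolation proof to the Musielak-Orlicz setting, exploiting the equivalent form $\widetilde{\vz}(x,t) = \int_0^t \vz(x,s)/s\,ds$ from Lemma \ref{lem1}(ii), whose absolutely continuous representation in $t$ converts the modular $\int\widetilde{\vz}(x,|Tf(x)|)\,dx$ into a usable layer-cake integral where the weak-type hypothesis \eqref{2.x1} can be plugged in.

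First, since $\widetilde\vz$ is equivalent to $\vz$, it suffices to estimate $\int_\rn \widetilde\vz(x,|Tf(x)|)\,dx$. By Fubini,
\begin{equation*}
\int_\rn \widetilde\vz(x,|Tf(x)|)\,dx = \int_0^\fz \frac{\vz(\{x\in\rn:\,|Tf(x)|>s\},s)}{s}\,ds.
\end{equation*}
For each fixed level $s>0$ I would split $f=f_1^s+f_2^s$ with $f_1^s:=f\chi_{\{|f|>s\}}$ and $f_2^s:=f\chi_{\{|f|\le s\}}$, observing that the uniform upper/lower type bounds guarantee $f_1^s\in L^{p_1}_{\vz(\cdot,1)}(\rn)$ and $f_2^s\in L^{p_2}_{\vz(\cdot,1)}(\rn)$, so $Tf_1^s$ and $Tf_2^s$ are meaningful. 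Sublinearity gives $\{|Tf|>s\}\subset\{|Tf_1^s|>s/2\}\cup\{|Tf_2^s|>s/2\}$, and I would apply hypothesis \eqref{2.x1} at the parameter $t=s$ (this is the crucial move in the Musielak-Orlicz setting, and is allowed because \eqref{2.x1} is stated uniformly in $t$) to bound $\vz(\{|Tf_i^s|>s/2\},s)$ by $s^{-p_i}\int |f_i^s|^{p_i}\vz(x,s)\,dx$, up to constants.

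After another Fubini, this reduces the whole estimate to
\begin{equation*}
\int_\rn \widetilde\vz(x,|Tf(x)|)\,dx
\ls \int_\rn |f(x)|^{p_1}\!\int_0^{|f(x)|}\!\! s^{-p_1-1}\vz(x,s)\,ds\,dx
+\int_\rn |f(x)|^{p_2}\!\int_{|f(x)|}^\fz\!\! s^{-p_2-1}\vz(x,s)\,ds\,dx.
\end{equation*}
For the inner integral on $(0,|f(x)|]$ I would use the uniform lower type $p_\vz^-$ to get $\vz(x,s)\ls (s/|f(x)|)^{p_\vz^-}\vz(x,|f(x)|)$; then $\int_0^{|f(x)|} s^{p_\vz^- - p_1 - 1}\,ds$ converges precisely because $p_\vz^->p_1$, and yields a factor $|f(x)|^{p_\vz^- - p_1}$ that exactly cancels the prefactor $|f(x)|^{p_1}$ against $|f(x)|^{p_\vz^-}$. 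Symmetrically, on $[|f(x)|,\fz)$ I would use the uniform upper type $p_\vz^+$, and the integral converges thanks to $p_\vz^+<p_2$. Both contributions collapse to $\int_\rn \vz(x,|f(x)|)\,dx$, proving the asserted modular inequality.

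Finally, for the $L^\vz$-norm statement, I would apply the modular inequality to $f/\lz$ with $\lz=\|f\|_{\lv}$; positive homogeneity of the sublinear $T$ and Lemma \ref{lem2}(i) give $\int_\rn\vz(x,|Tf(x)|/\lz)\,dx\le C$, and then Lemma \ref{lem3}(i) converts this modular bound into $\|Tf\|_\lv\le C'\|f\|_\lv$. The main obstacle, and the place where the hypotheses $p_1<p_\vz^-\le p_\vz^+<p_2$ are strictly needed, is the convergence of the two one-dimensional $s$-integrals in the last step; any weakening (e.g.\ not-attained type exponents, as occurs for the $H^{\log}$ example) must be dealt with by this strict inequality at the endpoints rather than by any endpoint compensation.
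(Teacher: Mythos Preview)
Your proposal is correct and follows essentially the same approach as the paper's proof: both use Lemma~\ref{lem1}(ii) to pass to the layer-cake form, split $f$ at the running level $s$, apply \eqref{2.x1} with the parameter $t=s$, and then close the two resulting one-dimensional $s$-integrals using the uniform lower type $p_\vz^->p_1$ and upper type $p_\vz^+<p_2$ exactly as you describe. One small remark: your final paragraph invokes positive homogeneity of $T$, which is not among the hypotheses; the paper simply takes the modular inequality itself as the content of ``$T$ is bounded on $L^\vz(\rn)$'' and does not derive a separate norm estimate.
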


\begin{proof} First observe that for all $t\in(0,\fz)$,
$$\int_\rn|f(x)|^p\vz(x,t)\,dx<\fz\quad \text{if and only if}\quad
\int_\rn|f(x)|^p\vz(x,1)\,dx<\fz.$$
Thus, the spaces $L^p_{\vz(\cdot,t)}(\rn)$ and $L^p_{\vz(\cdot,1)}(\rn)$
coincide as sets. Now we show that
$L^\vz(\rn)\st L^{p_{1}}_{\vz(\cdot,1)}(\rn)+L^{p_{2}}_{\vz(\cdot,1)}(\rn)$.

For any given $t\in (0,\fz)$, we decompose $f\in \lv$ as
$$f=f\chi_{\{x\in\rn:\ |f(x)|>t\}}+f\chi_{\{x\in\rn:\ |f(x)|\le t\}}
=: f^t+f_t.$$
Then by the fact that $\vz$ is of uniformly lower type $p_\vz^-$ and $p_1<p_\vz^-$,
we conclude that
\begin{eqnarray*}
\int_{\rn}|f^{ t}(x)|^{p_{1}}\vz(x,1)\,dx
&&\ls\int_{\{x\in\rn:\ |f(x)|> t\}}|f(x)|^{p_{1}}
\lf[\frac t{|f(x)|}\r]^{p_\vz^-}\vz\lf(x,\frac{|f(x)|} t\r)\,dx\\
&&\ls t^{p_{1}}\int_\rn\vz\lf(x,\frac{|f(x)|} t\r)\,dx<\fz,
\end{eqnarray*}
namely, $f^{ t}\in L^{p_{1}}_{\vz(\cdot,1)}(\rn)$.
Similarly we have $f_{ t}\in L^{p_{2}}_{\vz(\cdot,1)}(\rn)$ and hence
$Tf$ is well defined.

By the fact that $T$ is sublinear
and Lemma \ref{lem1}(ii), we further see that
\begin{eqnarray*}
\int_{\rn}\vz(x,|Tf(x)|)\,dx &&
\sim\int_{0}^{\fz}\frac1t\int_{\{x\in\rn:\ |Tf(x)|>t\}}\vz(x,t)\,dx\,dt\\
&&\ls\int_{0}^{\fz}\frac1t\int_{\{x\in\rn:\ |Tf^t(x)|>t/2\}}\vz(x,t)\,dx\,dt\\
&&\hs+\int_{0}^{\fz}\frac1t\int_{\{x\in\rn:\ |Tf_t(x)|>t/2\}}\cdots=:\mi_1+\mi_2.
\end{eqnarray*}

On $\mi_1$, since $T$ is of weak type $(p_{1},p_{1})$ (namely,
\eqref{2.x1} with $i=1$),
$\vz$ is of uniformly lower type $p_\vz^-$ and $p_1<p_\vz^-$,
we conclude that
\begin{eqnarray*}
\mi_1
&&\ls\int_{0}^{\fz}\frac1t\lf(\frac t2\r)^{-p_1}\int_\rn |f^t(x)|^{p_1}\vz(x,t)\,dx\,dt\\
&&\sim\int_{0}^{\fz}\frac1{t^{1+p_1}}\int_{\{x\in\rn:\ |f(x)|>t\}} |f(x)|^{p_1}\vz(x,t)\,dx\,dt\\
&&\sim\int_{0}^{\fz}\frac1{t^{1+p_1}}\int_{\{x\in\rn:\ |f(x)|>t\}}\vz(x,t)
\lf[\int_t^{|f(x)|} p_1s^{p_1-1}\,ds+t^{p_1}\r]\,dx\,dt\\
&&\sim\int_{0}^{\fz}s^{p_1-1}\int_{\{x\in\rn:\ |f(x)|> s\}}
\int_0^s \frac{\vz(x,t)}{t^{1+p_1}}\,dt\,dx\,ds
+\int_{0}^{\fz}\frac1{t}\int_{\{x\in\rn:\ |f(x)|>t\}}\vz(x,t)\,dx\,dt\\
&&\ls\int_{0}^{\fz}s^{p_1-1}\int_{\{x\in\rn:\ |f(x)|> s\}}
\vz(x,s)s^{-p_\vz^-}\int_0^s \frac{1}{t^{1+p_1-p_\vz^-}}\,dt\,dx\,ds
+\int_\rn\vz(x,|f(x)|)\,dx\\
&&\sim\int_{0}^{\fz}\frac1s\int_{\{x\in\rn:\ |f(x)|> s\}}\vz(x,s)dx\,ds
+\int_\rn\vz(x,|f(x)|)\,dx
\sim\int_\rn\vz(x,|f(x)|)\,dx.
\end{eqnarray*}

Also, from the weak type $(p_{2},p_{2})$ of $T$
(namely, \eqref{2.x1} with $i=2$), the uniformly upper type $p_\vz^+$
property of $\vz$ and $p_\vz^+<p_2$,
we deduce that
\begin{eqnarray*}
\mi_2
&&\ls\int_{0}^{\fz}\frac1t\lf(\frac t2\r)^{-p_2}\int_\rn |f_t(x)|^{p_2}\vz(x,t)\,dx\,dt\\
&&\sim\int_{0}^{\fz}\frac1{t^{1+p_2}}\int_{\{x\in\rn:\ |f(x)|\le t\}} |f(x)|^{p_2}\vz(x,t)\,dx\,dt\\
&&\sim\int_{0}^{\fz}\frac1{t^{1+p_2}}\int_{\{x\in\rn:\ |f(x)|\le t\}}\vz(x,t)
\int_0^{|f(x)|} p_2s^{p_2-1}\,ds\,dx\,dt\\
&&\sim\int_{0}^{\fz}s^{p_2-1}\int_{\{x\in\rn:\ |f(x)|> s\}}
\int_s^\fz \frac{\vz(x,t)}{t^{1+p_2}}\,dt\,dx\,ds\\
&&\ls\int_{0}^{\fz}s^{p_2-1}\int_{\{x\in\rn:\ |f(x)|> s\}}
\vz(x,s)s^{-p_\vz^+}\int_s^\fz \frac{1}{t^{1+p_2-p_\vz^+}}\,dt\,dx\,ds\\
&&\sim\int_{0}^{\fz}\frac1s\int_{\{x\in\rn:\ |f(x)|> s\}}\vz(x,s)dx\,ds
\sim\int_\rn\vz(x,|f(x)|)\,dx.
\end{eqnarray*}
Thus, $T$ is bounded on $\lv$, which completes the proof of Theorem \ref{inter}.
\end{proof}

Recall that for any locally integrable function $f$ and $x\in\rn$,
the \emph{Hardy-Littlewood maximal function $Mf(x)$} is defined by
$$Mf(x):=\sup_{x\in B}\frac1{|B|}\int_B|f(y)|\,dy,$$
where the supremum is taken over all balls $B$ containing $x$.
Let $q(\vz)$ be as in \eqref{2.3}.
As a simple corollary of Theorem \ref{inter}, together with
the fact that for any $p\in (q(\vz),\fz)$ if $q(\vz)\in (1,\fz)$ or if $q(\vz)=1$
and $\vz\notin\aa_1(\rn)$, or for any
$p\in [1,\fz)$ if $q(\vz)=1$ and $\vz\in\aa_1(\rn)$, there exists a positive
constant $C_{(p,\vz)}$ such that for all $f\in L^p_{\vz(\cdot,t)}(\rn)$ and $t\in(0,\fz)$,
\begin{eqnarray*}
\vz(\{x\in\rn:\ |Mf(x)|>\az\},t)\le C_{(p,\vz)}\az^{-p}\int_\rn|f(x)|^{p}\vz(x,t)\,dx,
\end{eqnarray*}
we immediately obtain the following
boundedness of $M$ on $\lv$. We omit the details.

\begin{cor}\label{HL-v}
Let $\vz$ be a
Musielak-Orlicz function with uniformly lower type $p_\vz^-$ and
uniformly upper type $p_\vz^+$ satisfying
$q(\vz)<p_\vz^-\le p_\vz^+<\fz$, where $q(\vz)$ is as in \eqref{2.3}.
Then the Hardy-Littlewood Maximal function $M$ is bounded on $\lv$ and,
moreover, there exists a positive constants $C$
such that for all $f\in L^\vz(\rn)$,
$$\int_\rn\vz(x,Mf(x))\,dx\le C\int_\rn\vz(x,|f(x)|)\,dx.$$
\end{cor}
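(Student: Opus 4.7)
The plan is to deduce Corollary \ref{HL-v} by verifying the hypotheses of Theorem \ref{inter} for the Hardy-Littlewood maximal operator $M$, with exponents $p_1, p_2$ chosen so that $q(\vz)<p_1<p_\vz^-\le p_\vz^+<p_2<\fz$. The only nontrivial input is the uniform-in-$t$ weighted weak-type $(p_i,p_i)$ estimate
\begin{equation*}
\vz(\{x\in\rn:\ Mf(x)>\az\},t)\le C_{(p_i,\vz)}\,\az^{-p_i}\int_\rn |f(x)|^{p_i}\vz(x,t)\,dx
\end{equation*}
highlighted in the excerpt. Granting this for $i=1,2$, Theorem \ref{inter} applied to the sublinear operator $T:=M$ directly yields the desired modular inequality $\int_\rn\vz(x,Mf(x))\,dx\ls\int_\rn\vz(x,|f(x)|)\,dx$, and hence $\|Mf\|_{\lv}\ls\|f\|_{\lv}$ via Lemma \ref{lem3}(i).

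To produce the weighted weak-type bound, I would use the definition of $q(\vz)$ in \eqref{2.3}: since $p_1>q(\vz)$, there exists $q\in[q(\vz),p_1)$ such that $\vz\in\aa_q(\rn)$; by Lemma \ref{l2.x1}(i) this gives $\vz\in\aa_{p_1}(\rn)$. Unwinding Definition \ref{d2.1} and \eqref{2.2}, this says precisely that the weights $\{\vz(\cdot,t)\}_{t\in[0,\fz)}$ lie in the classical Muckenhoupt class $A_{p_1}(\rn)$ with a constant independent of $t$. The standard weighted weak-type estimate for $M$ (see, for example, \cite{gr85}) then yields the claimed inequality with a constant $C_{(p_1,\vz)}$ uniform in $t$. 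The same reasoning applies with $p_2$ in place of $p_1$. The boundary case $q(\vz)=1$ splits as indicated in the statement: if $\vz\in\aa_1(\rn)$ one may take $p_1=1$ and invoke the weighted weak-type $(1,1)$ estimate; if $\vz\notin\aa_1(\rn)$ then necessarily $p_1>1=q(\vz)$, and Lemma \ref{l2.x1}(ii) or the definition of $q(\vz)$ again gives $\vz\in\aa_{p_1}(\rn)$.

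With both endpoint weak-type estimates in hand, all hypotheses of Theorem \ref{inter} are satisfied, so the theorem delivers the modular inequality stated in Corollary \ref{HL-v}, from which the $\lv$-boundedness of $M$ follows at once. The main subtlety, and the point one has to be careful with, is the uniformity in $t$ of the $A_{p_1}$ and $A_{p_2}$ constants of the weights $\vz(\cdot,t)$; this is not an additional hypothesis but is built directly into the definition of $\aa_q(\rn)$ via the outer supremum over $t\in[0,\fz)$ in \eqref{2.2}, so no further work is required beyond invoking the classical proof of the weighted weak-type inequality for $M$ with a fixed $A_{p_i}$ weight and observing that the resulting constant depends only on $\aa_{p_i}(\vz)$.
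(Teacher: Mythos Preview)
Your proposal is correct and matches the paper's own approach exactly: the paper states (without further detail) that Corollary~\ref{HL-v} follows from Theorem~\ref{inter} together with the uniform-in-$t$ weighted weak-type $(p,p)$ estimate for $M$ valid for all $p>q(\vz)$ (or $p\ge1$ when $q(\vz)=1$ and $\vz\in\aa_1(\rn)$). Your remark that the uniformity in $t$ is built into the supremum in Definition~\ref{d2.1} is precisely the point, and your choice of $p_1,p_2$ with $q(\vz)<p_1<p_\vz^-\le p_\vz^+<p_2<\fz$ is the intended one.
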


The \emph{space $L^\vz(\ell^r,\rn)$} is defined to be the set of all $\{f_j\}_{j\in\zz}$
satisfying $[\sum_j|f_j|^r]^{1/r}\in L^\vz(\rn)$ and let
$$\|\{f_j\}_j\|_{L^\vz(\ell^r,\rn)}:=\lf\|\lf[\sum_j|f_j|^r\r]^{1/r}\r\|_\lv.$$
We have the following
vector-valued interpolation theorem of Musielak-Orlicz type.

\begin{thm}\label{inter-vector}
Let $p_1,\,p_2$ and $\vz$ be as in Theorem \ref{inter} and $r\in[1,\fz]$.
Assume that $T$ is a sublinear operator defined on
$L^{p_1}_{\vz(\cdot,1)}(\rn)+L^{p_2}_{\vz(\cdot,1)}(\rn)$
satisfying that for $i\in\{1,2\}$ and all
$\{f_j\}_j\in L^{p_i}_{\vz(\cdot,1)}(\ell^r,\rn)$, $\az\in(0,\fz)$ and $t\in(0,\fz)$,
\begin{eqnarray}\label{2.x2}
&&\vz\lf(\lf\{x\in\rn:\ \lf[\sum_j|Tf_j(x)|^r\r]^{\frac 1r}>\az\r\},t\r)\\
&&\hs\le C_i \az^{-p_i}\int_\rn\lf[\sum_j|f_j(x)|^r\r]^{\frac {p_i}r}\vz(x,t)\,dx,\noz
\end{eqnarray}
where $C_i$ is a positive constant independent of $\{f_j\}_j$, $t$ and $\az$.
Then there exists a positive constant $C$ such that for all $\{f_j\}_j\in L^\vz(\ell^r,\rn)$,
$$\int_\rn\vz\lf(x,\lf[\sum_j|Tf_j(x)|^r\r]^{1/r}\r)\,dx
\le C \int_\rn\vz\lf(x,\lf[\sum_j|f_j(x)|^r\r]^{1/r}\r)\,dx.$$
\end{thm}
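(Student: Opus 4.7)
The plan is to mimic the scalar proof of Theorem \ref{inter} with $|f|$ replaced by the $\ell^r$-size of the sequence. Set
$$F(x):=\lf[\sum_j|f_j(x)|^r\r]^{1/r}\quad\text{and}\quad G(x):=\lf[\sum_j|Tf_j(x)|^r\r]^{1/r},$$
and for each $t\in(0,\fz)$ I would decompose \emph{every} component of the sequence using a common cutoff based on $F$, namely
$$f_j=f_j^t+f_{j,t},\quad f_j^t:=f_j\chi_{\{x\in\rn:\ F(x)>t\}},\quad f_{j,t}:=f_j\chi_{\{x\in\rn:\ F(x)\le t\}}.$$
This ensures that $[\sum_j|f_j^t|^r]^{1/r}=F\chi_{\{F>t\}}$ and $[\sum_j|f_{j,t}|^r]^{1/r}=F\chi_{\{F\le t\}}$ pointwise, so the same uniformly-lower-type estimate used in the scalar case gives $\{f_j^t\}_j\in L^{p_1}_{\vz(\cdot,1)}(\ell^r,\rn)$ and, symmetrically, $\{f_{j,t}\}_j\in L^{p_2}_{\vz(\cdot,1)}(\ell^r,\rn)$; hence the hypothesis \eqref{2.x2} applies to both sequences.

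Next, using $r\in[1,\fz]$ together with Minkowski's inequality in $\ell^r$ and the sublinearity of $T$, I obtain
$$G(x)\le\lf[\sum_j|Tf_j^t(x)|^r\r]^{1/r}+\lf[\sum_j|Tf_{j,t}(x)|^r\r]^{1/r},$$
so that
$$\{x\in\rn:\ G(x)>t\}\st\lf\{x\in\rn:\ \lf[\sum_j|Tf_j^t(x)|^r\r]^{1/r}>\frac t2\r\}\cup\lf\{x\in\rn:\ \lf[\sum_j|Tf_{j,t}(x)|^r\r]^{1/r}>\frac t2\r\}.$$

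From here the argument copies the scalar case verbatim. By Lemma \ref{lem1}(ii),
$$\int_\rn\vz(x,G(x))\,dx\sim\int_0^\fz\frac1t\int_{\{x\in\rn:\ G(x)>t\}}\vz(x,t)\,dx\,dt\le \mi_1+\mi_2,$$
where $\mi_1$ and $\mi_2$ are the contributions of the two sets on the right-hand side above. Applying \eqref{2.x2} with $i=1$ to $\mi_1$ and with $i=2$ to $\mi_2$, swapping the order of integration via Fubini, splitting the resulting inner $t$-integral at $s=F(x)$, and invoking the uniformly lower type $p_\vz^-$ (with $p_1<p_\vz^-$) on $\mi_1$ and the uniformly upper type $p_\vz^+$ (with $p_\vz^+<p_2$) on $\mi_2$ to make the $t$-integrals converge, yields
$$\mi_1+\mi_2\ls\int_\rn\vz(x,F(x))\,dx,$$
which is the desired estimate.

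The only genuine design choice is the decomposition of $\{f_j\}$: the components must all be truncated using the \emph{same} cutoff set $\{F>t\}$, so that the $\ell^r$-size of the truncated sequence behaves like $F\chi_{\{F>t\}}$ (resp.\ $F\chi_{\{F\le t\}}$) and the hypothesis \eqref{2.x2} produces exactly an $F^{p_i}$ integrand on the right-hand side. Once this choice is made, everything else is a mechanical repetition of the scalar computation. When $r=\fz$, the same argument applies with the $\ell^r$-sum interpreted as the supremum, and Minkowski's inequality is then trivial.
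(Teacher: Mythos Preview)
Your proof is correct. The paper takes a slightly different route in presentation: rather than repeat the layer-cake computation, it fixes the normalized directions $n_j(x):=f_j(x)/F(x)$, defines the auxiliary \emph{scalar} operator $A(g):=[\sum_j|T(gn_j)|^r]^{1/r}$, checks that $A$ is sublinear (via Minkowski in $\ell^r$, which is where $r\ge 1$ enters) and satisfies the scalar weak-type bounds \eqref{2.x1} (from \eqref{2.x2} and $[\sum_j|gn_j|^r]^{1/r}=|g|$), and then simply applies Theorem \ref{inter} to $A$ with $g=F$ as a black box. This is your argument in disguise---when the scalar proof splits $g=g^t+g_t$, one has $g^t n_j=f_j\chi_{\{F>t\}}$, exactly your $f_j^t$---but it avoids rewriting the Fubini and type-exponent calculations. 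Your direct version is equally valid and arguably more transparent; the paper's packaging buys brevity and makes it plain that no new idea beyond the scalar case is required.
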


\begin{proof}
For all $\{f_j\}_j\in L^\vz(\ell^r,\rn)$ and $x\in\rn$, let
$$n_j(x):=\frac{f_j(x)}{[\sum_j|f_j(x)|^r]^{1/r}}
\quad {\rm if}\quad \lf[\sum_j|f_j(x)|^r\r]^{1/r}\neq0,$$
and $n_j(x)=0$ otherwise.
Then $[\sum_j|n_j(x)|^r]^{1/r}=1$ for all $x\in\rn$.
Consider the operator
$$A(g):=\lf[\sum_j|T(gn_j)|^r\r]^{1/r},$$
where $g\in L_{\vz(\cdot,1)}^{p_1}(\rn)+L_{\vz(\cdot,1)}^{p_2}(\rn)$.
Then, for all
$g_1,\,g_2\in L_{\vz(\cdot,1)}^{p_1}(\rn)+L_{\vz(\cdot,1)}^{p_2}(\rn)$
and $x\in\rn$, by the sublinear property of $T$
and Minkowski's inequality, we see that
\begin{eqnarray*}
A(g_1+g_2)(x)&&=\lf[\sum_j|T((g_1+g_2)n_j)(x)|^r\r]^{1/r}\\
&&\le \lf\{\sum_j\lf[|T(g_1n_j)(x)|+|T(g_2n_j)(x)|\r]^r\r\}^{1/r}\\
&&\le\lf[\sum_j|T(g_1n_j)(x)|^r\r]^{1/r}+\lf[\sum_j|T(g_2n_j)(x)|^r\r]^{1/r}\\
&&=A(g_1)(x)+A(g_2)(x).
\end{eqnarray*}
Thus, $A$ is sublinear. Moreover, by \eqref{2.x2},
we further conclude that for all $i\in\{1,2\}$, $\az\in(0,\fz)$,
$t\in(0,\fz)$ and
$g\in L_{\vz(\cdot,1)}^{p_1}(\rn)+L_{\vz(\cdot,1)}^{p_2}(\rn)$,
\begin{eqnarray*}
\vz(\{x\in\rn:\ |A(g)(x)|>\az\},t)&&=
\vz\lf(\lf\{x\in\rn:\ \lf[\sum_j|T(gn_j)(x)|^r\r]^{1/r}>\az\r\},t\r)\\
&&\ls \az^{-p_i}\int_\rn\lf[\sum_j|gn_j(x)|^r\r]^{p_i/r}\vz(x,t)\,dx\\
&&\ls \az^{-p_i}\int_\rn|g(x)|^{p_i}\vz(x,t)\,dx,
\end{eqnarray*}
which implies that $A$ satisfies \eqref{2.x1}.
Thus, if setting $g=[\sum_j|f_j|^r]^{1/r}$, from Theorem \ref{inter},
we deduce that
\begin{eqnarray*}
\int_\rn\vz\lf(x,\lf[\sum_j|Tf_j(x)|^r\r]^{1/r}\r)\,dx
&&=\int_\rn\vz(x,|Ag(x)|)\,dx
\ls\int_\rn\vz(x,|g(x)|)\,dx\\
&&\ls \int_\rn\vz\lf(x,\lf[\sum_j|f_j(x)|^r\r]^{1/r}\r)\,dx,
\end{eqnarray*}
which completes the proof of Theorem \ref{inter-vector}.
\end{proof}

By using Theorem \ref{inter-vector} and \cite[Theorem 3.1(a)]{aj80},
we immediately obtain the following Musielak-Orlicz
Fefferman-Stein vector-valued inequality,
which, when $\vz(x,t):=t^p$ for all $t\in(0,\fz)$ and $x\in\rn$ with $p\in(1,\fz)$,
was obtained by Fefferman and Stein in \cite[Theorem 1]{fs71}
and, when $\vz(x,t):=w(x)t^p$ for all $t\in(0,\fz)$ and $x\in\rn$
with $p\in(1,\fz)$, $q\in(1,\fz)$ and $w\in A_q(\rn)$,
by Andersen and John in \cite[Theorem 3.1]{aj80}.
We point out that to apply Theorem \ref{inter-vector},
we need $r\in(1,\fz]$.

\begin{thm}\label{f-s}
Let $r\in(1,\fz]$,
$\vz$ be a Musielak-Orlicz function with uniformly lower type $p_\vz^-$
and upper type $p_\vz^+$, $q\in(1,\infty)$ and $\varphi\in\mathbb A_q(\rn)$.
If $q(\vz)<p_\vz^-\le p_\vz^+<\fz$,
then there exists a
positive constant $C$ such that,
for all $\{f_j\}_{j\in\zz}\in L^\vz(\ell^r,\rn)$,
$$\int_\rn\vz\lf(x,\lf\{\sum_{j\in\zz}\lf[M(f_j)(x)\r]^{r}\r\}^{1/r}\r)\,dx
\le C\int_\rn\vz\lf(x,\lf[\sum_{j\in\zz}|f_j(x)|^r\r]^{1/r}\r)\,dx.$$
\end{thm}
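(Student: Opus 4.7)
The plan is to reduce Theorem \ref{f-s} to the vector-valued interpolation result, Theorem \ref{inter-vector}, applied to the Hardy-Littlewood maximal operator $T=M$. To use Theorem \ref{inter-vector}, I need to exhibit two exponents $p_1,p_2$ with $0<p_1<p_\vz^-\le p_\vz^+<p_2<\fz$ such that the weak-type vector-valued bound \eqref{2.x2} holds for $M$ at both endpoints, with constants independent of the parameter $t\in[0,\fz)$.

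First I would choose $p_1$ in the interval $(q(\vz),p_\vz^-)$, which is non-empty by the hypothesis $q(\vz)<p_\vz^-$; by the very definition \eqref{2.3} of $q(\vz)$, this choice guarantees that $\vz\in\aa_{p_1}(\rn)$, and Lemma \ref{l2.x1}(i) then gives $\vz\in\aa_{p_2}(\rn)$ for any $p_2>p_\vz^+$, which I pick arbitrarily. The crucial observation is that the uniformly Muckenhoupt condition in Definition \ref{d2.1} says exactly that the ordinary Muckenhoupt constant $[\vz(\cdot,t)]_{A_{p_i}}$ is bounded above by $\aa_{p_i}(\vz)$ \emph{uniformly} in $t\in[0,\fz)$. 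Consequently, for each $i\in\{1,2\}$ and each fixed $t$, the weight $w_t(\cdot):=\vz(\cdot,t)$ belongs to the classical Muckenhoupt class $A_{p_i}(\rn)$ with a constant independent of~$t$.

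Next I would invoke the weighted vector-valued Fefferman-Stein inequality of Andersen-John \cite[Theorem 3.1(a)]{aj80}: for every $w\in A_{p_i}(\rn)$ and every $r\in(1,\fz]$, there is a constant $C$ depending only on $p_i$, $r$ and $[w]_{A_{p_i}}$ such that
\begin{equation*}
\int_\rn\lf\{\sum_{j}[M(f_j)(x)]^r\r\}^{p_i/r}w(x)\,dx
\le C\int_\rn\lf[\sum_j|f_j(x)|^r\r]^{p_i/r}w(x)\,dx.
\end{equation*}
Taking $w=w_t=\vz(\cdot,t)$ and using the uniform control of $[w_t]_{A_{p_i}}$ from the previous step, I would then derive, via Chebyshev's inequality, the weak-type bound
\begin{equation*}
\vz\lf(\lf\{x\in\rn:\ \lf[\sum_{j}|M(f_j)(x)|^r\r]^{1/r}>\az\r\},t\r)
\le C_i\az^{-p_i}\int_\rn\lf[\sum_j|f_j(x)|^r\r]^{p_i/r}\vz(x,t)\,dx,
\end{equation*}
with $C_i$ independent of both $\az$ and $t$. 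This is precisely hypothesis \eqref{2.x2} of Theorem \ref{inter-vector} for the sublinear operator $Tf_j:=Mf_j$, and applying that theorem to $\{f_j\}_j$ yields the conclusion of Theorem \ref{f-s}.

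The main obstacle, and the reason I spell out the choice of $p_1,p_2$ so carefully, is ensuring that the weak-type constants $C_i$ in \eqref{2.x2} do not depend on $t$; without this the interpolation argument of Theorem \ref{inter} breaks down. This uniformity is exactly the content of the uniformly Muckenhoupt condition in Definition \ref{d2.1}, and the restriction $q(\vz)<p_\vz^-$ in the hypotheses is what allows me to choose $p_1$ below $p_\vz^-$ while still retaining $\vz\in\aa_{p_1}(\rn)$, using Lemma \ref{l2.x1} together with \eqref{2.3}. Once this uniformity is in hand, the remaining work is bookkeeping in Theorem \ref{inter-vector}, which has already done the heavy lifting of converting weak-type bounds at two exponents into the required Musielak-Orlicz modular estimate.
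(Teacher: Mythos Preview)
Your proposal is correct and follows essentially the same route as the paper: the paper derives Theorem \ref{f-s} directly from Theorem \ref{inter-vector} together with the weighted vector-valued maximal inequality of Andersen and John \cite[Theorem 3.1(a)]{aj80}, which is exactly what you do. Your write-up simply spells out in detail the choice of $p_1\in(q(\vz),p_\vz^-)$ and $p_2\in(p_\vz^+,\fz)$ and the crucial uniformity in $t$ of the $A_{p_i}$ constants coming from Definition \ref{d2.1}, points that the paper leaves implicit.
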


\subsection{Musielak-Orlicz Hardy spaces\label{s2.3}}

\hskip\parindent In what follows, we denote by $\cs(\rn)$ the
\emph{space of all Schwartz functions} and
by $\cs'(\rn)$ its \emph{dual space} (namely, the \emph{space of all tempered distributions}).
For $m\in\nn$, define
$$\cs_m(\rn):=\lf\{\psi\in\cs(\rn):\ \sup_{x\in\rn}\sup_{
\bz\in\zz^n_+,\,|\bz|\le m+1}(1+|x|)^{(m+2)(n+1)}|\partial^\bz_x\psi(x)|\le1\r\}.
$$
Then for all $f\in\cs'(\rn)$, the \emph{nontangential grand maximal function}, $f^\ast_m$,
of $f$ is defined by setting, for all $x\in\rn$,
\begin{equation}\label{2.x3}
f^\ast_m(x):=\sup_{\psi\in\cs_m(\rn)}\sup_{|y-x|<t,\,t\in(0,\fz)}|f\ast\psi_t(y)|,
\end{equation}
where for all $t\in(0,\fz)$, $\psi_t(\cdot):=t^{-n}\psi(\frac{\cdot}{t})$.
When
\begin{equation}\label{2.y1}
m(\vz):=\lfz n[q(\vz)/i(\vz)-1]\rfz,
\end{equation}
where $q(\vz)$ and $i(\vz)$ are,
respectively, as in \eqref{2.3} and \eqref{2.1}, we \emph{denote $f^\ast_{m(\vz)}$ simply
by $f^\ast$.}

Now we recall the definition of the Musielak-Orlicz Hardy space $H^\vz(\rn)$ introduced
by Ky \cite{ky} as follows.

\begin{defn}\label{def2}
Let $\vz$ be a growth function. The \emph{Musielak-Orlicz Hardy space
$H^\vz(\rn)$} is defined to be the space of all $f\in\cs'(\rn)$ such that
$f^\ast\in L^\vz(\rn)$
with the \emph{quasi-norm}
$$\|f\|_{H^\vz(\rn)}:=\|f^\ast\|_{L^\vz(\rn)}.$$
\end{defn}

When $\varphi(x,t)=w(x)\Phi(t)$ for all $x\in\rn$ and
$t\in(0,\fz)$, with $w$ being a Muckenhoupt
weight and $\Phi$ an Orlicz function, $\hv$ is just the
weighted Hardy-Orlicz space which includes the classical
Hardy-Orlicz spaces of Janson \cite{j80} ($w=1$ in this
context) and the classical weighted Hardy spaces of
Garc\'{i}a-Cuerva \cite{gr85}, Str\"{o}mberg and
Torchinsky \cite{st89} ($\Phi(t):=t^p$ for all $t\in(0,\fz)$ in this context).

In order to introduce the atomic Musielak-Orlicz Hardy space,
Ky \cite{ky} introduced the following local Musielak-Orlicz space.

\begin{defn}\label{den3}
For any cube $Q$ in $\mathbb R^n$, the \emph{space $L_\varphi^q(Q)$} for
$q\in[1,\infty]$ is defined to be the set of all measurable functions $f$ on
$\mathbb R^n$ supported in $Q$ such that
\begin{eqnarray*}
\; \|f\|_{L_\varphi^q(Q)}:= \left\lbrace
\begin{array}{l l}
\displaystyle \sup_{t\in(0,\fz)}\lf[\frac1{\varphi(Q,t)}{\int_{\mathbb
R^n}|f(x)|^q\varphi(x,t)dx}\r]^{1/q}<\infty,
\ &q\in [1,\fz);\\
\\
\|f\|_{L^\infty(\rn)}<\infty,
&q=\infty.
\end{array} \right.
\end{eqnarray*}
\end{defn}

Now, we recall the atomic Musielak-Orlicz Hardy spaces introduced by Ky \cite{ky}
as follows.

\begin{defn}\label{atom}
A triplet $(\varphi,q,s)$ is called \emph{admissible}, if
$q\in(q(\varphi),\infty]$ and $s\in\mathbb N$ satisfies $s\geq
m(\varphi)$. A measurable function $a$ is called a \emph{$(\varphi,q,s)$-atom}
if it satisfies the following three conditions:

{\rm (i)} $a\in L_\varphi^q(Q)$ for some cube $Q$;

{\rm (ii)} $\|a\|_{L_\varphi^q(Q)}\le \|\chi_Q\|_{L^\varphi(\rn)}^{-1}$;

{\rm (iii)} $\int_{\mathbb R^n}a(x)x^\alpha dx=0$ for any $|\alpha|\le s$.

The \emph{atomic Musielak-Orlicz Hardy space}
$H_{\mathrm{at}}^{\varphi,q,s}(\mathbb R^n)$ is defined to be the space of all
$f\in\mathcal{S}'(\mathbb R^n)$ that can be
represented as a sum of multiples of $(\varphi,q,s)$-atoms, that
is, $f=\sum_j b_j$ in $\mathcal{S}'(\mathbb R^n)$,
where, for each $j$, $b_j$ is a multiple of some $(\varphi,q,s)$-atom supported in
some cube $Q_j$, with the property
$\sum_j\varphi(Q_j,\|b_j\|_{L_\varphi^q(Q_j)})<\infty.$
For any given sequence of multiples of $(\varphi,q,s)-$atoms,
$\{b_j\}_j$, let
$$\Lambda_q(\{b_j\}_j):=\inf\lf\{\lambda>0:\ \sum_j
\varphi\lf(Q_j,\frac{\|b_j\|_{L_\varphi^q(Q_j)}}{\lambda}\r)\le
1\r\}$$
and then define
$$ \|f\|_{H_{\mathrm{at}}^{\varphi,q,s}(\rn)}:=\inf\lf\{\Lambda_q(\{b_j\}_j):\ f=\sum_jb_j\quad\text{in
}\,\ \mathcal{S}'(\mathbb R^n)\r\},$$
where the infimum is taken over all decompositions of $f$ as above.
\end{defn}

The following Proposition \ref{prop0} is just \cite[Theorem 3.1]{ky}.

\begin{prop}\label{prop0}
Let $(\varphi,q,s)$ be admissible. Then $H^\varphi(\mathbb
R^n)=H_{\mathrm{at}}^{\varphi,q,s}(\mathbb R^n)$ with equivalent norms.
\end{prop}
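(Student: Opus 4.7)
The plan is to establish the two set inclusions separately.

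For $H_{\mathrm{at}}^{\vz,q,s}(\rn)\st\hv$, I would first prove a uniform bound on the grand maximal function of a single $(\vz,q,s)$-atom $a$ supported in a cube $Q$. I would split $a^\ast=a^\ast\chi_{2Q}+a^\ast\chi_{(2Q)^{\complement}}$. On $2Q$, the pointwise bound $a^\ast\ls M(a)$, the boundedness of the Hardy--Littlewood maximal operator on $L^q_{\vz(\cdot,t)}(\rn)$ (available since $q>q(\vz)$), and H\"older's inequality against the $L^q_\vz(Q)$-size condition on $a$ together yield the local estimate. On $(2Q)^{\complement}$, the vanishing moments of $a$ up to order $s\ge m(\vz)$ produce, via Taylor expansion of the test function, pointwise decay of the form
$$a^\ast(x)\ls\|a\|_{L_\vz^q(Q)}\,\ell(Q)^{n+s+1}|x-x_Q|^{-n-s-1},$$
where $x_Q$ is the center of $Q$. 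Given a decomposition $f=\sum_jb_j$, the uniform quasi-subadditivity of $\vz$ from Lemma \ref{lem1}(i) combined with the $(\vz,q,s)$-atom normalization, together with Lemmas \ref{lem2} and \ref{lem3}, then converts control of $\sum_j\vz(Q_j,\|b_j\|_{L_\vz^q(Q_j)})$ into the desired $\|f^\ast\|_{\lv}\ls\Lambda_q(\{b_j\}_j)$.

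For the converse $\hv\st H_{\mathrm{at}}^{\vz,q,s}(\rn)$, I would adapt the classical Calder\'on--Zygmund decomposition scheme to the Musielak-Orlicz setting. For each $k\in\zz$, set $\Omega_k:=\{x\in\rn:\,f^\ast(x)>2^k\}$, perform a Whitney decomposition $\Omega_k=\bigcup_iQ_{k,i}$ with bounded overlap of suitable dilates, and use a subordinate smooth partition of unity $\{\eta_{k,i}\}_i$ to form the good/bad splitting
$$g_k:=f\chi_{\Omega_k^{\complement}}+\sum_iP_{k,i}\eta_{k,i},\qquad b_k:=f-g_k=\sum_i(f-P_{k,i})\eta_{k,i},$$
where $P_{k,i}$ is the orthogonal projection of $f$ onto polynomials of degree at most $s$ in $L^2(\eta_{k,i}\,dx)$. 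A telescoping identity $f=\sum_{k\in\zz}(g_{k+1}-g_k)$ in $\cs'(\rn)$, together with a further decomposition of each increment into pieces supported on enlarged Whitney cubes $\wz Q_{k,i}$, produces an atomic representation $f=\sum_{k,i}\lz_{k,i}a_{k,i}$ with each $a_{k,i}$ a $(\vz,q,s)$-atom and $\lz_{k,i}\sim 2^k\|\chi_{\wz Q_{k,i}}\|_{\lv}$.

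The main obstacle is the final summation step in this converse direction, namely, producing
$$\sum_{k\in\zz}\sum_i\vz\lf(\wz Q_{k,i},\frac{2^k}{\az}\r)\ls\int_\rn\vz\lf(x,\frac{f^\ast(x)}{\az}\r)dx$$
for every $\az\in(0,\fz)$, after which Lemma \ref{lem3}(ii) applied with $\az\sim\|f^\ast\|_{\lv}$ yields $\Lambda_q(\{\lz_{k,i}a_{k,i}\})\ls\|f\|_{\hv}$. The estimate rests on three ingredients: the bounded overlap of the Whitney cubes at each level $k$; the $\aa_q(\rn)$-condition of $\vz$ combined with the Whitney inequality $|\Omega_k\cap 2Q_{k,i}|\gs|Q_{k,i}|$, which gives $\vz(\wz Q_{k,i},t)\ls\vz(\Omega_k\cap cQ_{k,i},t)$ uniformly in $t\in(0,\fz)$; and a layer-cake-type argument exploiting the positive uniform lower type $i(\vz)>0$ of $\vz$ to reassemble $\sum_{k\in\zz}\vz(\Omega_k\setminus\Omega_{k+1},2^k)$ into $\int_\rn\vz(x,f^\ast(x))\,dx$. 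The subtlety here is that, since $\vz$ depends jointly on $(x,t)$, the layer-cake step must be carried out with the spatial variable frozen and the growth in $t$ controlled by the uniform upper and lower type conditions, after which Lemma \ref{lem1}(ii) guarantees an equivalent strictly increasing continuous representative making the slicing arguments rigorous.
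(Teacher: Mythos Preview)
The paper does not actually prove this proposition: immediately before the statement it says ``The following Proposition \ref{prop0} is just \cite[Theorem 3.1]{ky}'', so the result is quoted from Ky's work without argument. Your proposal is a faithful outline of the classical Fefferman--Stein/Calder\'on--Zygmund scheme adapted to the Musielak--Orlicz setting, which is precisely the strategy carried out in \cite{ky}; in that sense your approach and the (cited) original coincide.

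One point worth sharpening in your sketch: in the inclusion $H_{\mathrm{at}}^{\vz,q,s}(\rn)\subset\hv$, the away-from-cube estimate $a^\ast(x)\ls\|a\|_{L_\vz^q(Q)}\,\ell(Q)^{n+s+1}|x-x_Q|^{-n-s-1}$ is not by itself enough; you must then integrate $\vz(x,a^\ast(x)/\lz)$ over the annuli $2^{j+1}Q\setminus 2^jQ$ and invoke the $\aa_{q(\vz)}(\rn)$ doubling property $\vz(2^jQ,t)\ls 2^{jnq(\vz)}\vz(Q,t)$ together with the uniform lower type $p$ of $\vz$, so that the geometric factor $2^{-j(n+s+1)p}\cdot 2^{jnq(\vz)}$ is summable precisely because $s\ge m(\vz)=\lfloor n(q(\vz)/i(\vz)-1)\rfloor$. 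This is the step that explains why $m(\vz)$ is the correct threshold, and it is the Musielak--Orlicz analogue of the usual $s\ge\lfloor n(1/p-1)\rfloor$ condition.
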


\section{Maximal function characterizations of $H^\varphi(\mathbb R^n)$\label{s3}}

\hskip\parindent In this section, we establish some maximal function
characterizations of $\hv$.
First, we recall the notions of the vertical, the tangential and the nontangential
maximal functions. In what follows, let the \emph{space} $\cd(\rn)$ be the space of all $C^\fz(\rn)$
functions with compact support, endowed with the inductive limit
topology, and $\cd'(\rn)$ its \emph{topological dual space},
endowed with the weak-$\ast$ topology.

\begin{defn}\label{d3.6}
Let
\begin{eqnarray}\label{3.2}
\psi_0\in\cd(\rn)\,\, \text{and}\,\,\int_{\rn}\psi_0 (x)\,dx\neq0.
\end{eqnarray}
For $j\in\zz$, $A,\,B\in[0,\fz)$ and $y\in\rn$, let
$m_{j,\,A,\,B}(y):=(1+2^j |y|)^A 2^{B|y|}$. The
{\it vertical maximal function} $\psi_0^{+}(f)$ of $f$ associated to $\psi_0$ is
defined by setting, for all $x\in\rn$,
\begin{eqnarray}\label{3.3}
\psi_0^{+}(f)(x):= \sup_{j\in\zz}|(\psi_0)_j \ast f(x)|,
\end{eqnarray}
the {\it tangential Peetre-type  maximal function
$\psi^{\ast\ast}_{0,\,A,\,B}(f)$} of $f$ associated to $\psi_0$ is
defined by setting, for all $x\in\rn$,
\begin{eqnarray}\label{3.4}
\psi^{\ast\ast}_{0,\,A,\,B}(f)(x):=\sup_{j\in\zz,\,y\in\rn}
\frac{|(\psi_0)_j \ast f(x-y)|}{m_{j,\,A,\,B}(y)}
\end{eqnarray}
and the {\it nontangential maximal function
$(\psi_0)^{\ast}_{\triangledown}(f)$} of $f$ associated to $\psi_0$ is
defined by setting, for all $x\in\rn$,
$$(\psi_0)^{\ast}_{\triangledown}(f)(x):= \sup_{|x-y|<t}|(\psi_0)_t
\ast f(y)|,$$
here and in what follows, for all $x\in\rn$,
$(\psi_0)_j(x):=2^{jn}\psi_0 (2^j x)$ for all $j\in\zz$ and $(\psi_0)_t
(x):=\frac{1}{t^n}\psi_0 (\frac{x}{t})$ for all $t\in(0,\fz)$.
\end{defn}

Obviously, for all $x\in\rn$, we have
$\psi_0^{+}(f)(x)\le(\psi_0)^{\ast}_{\triangledown}(f)(x)
\ls\psi^{\ast\ast}_{0,\,A,\,B}(f)(x)$.

In order to establish the vertical or the
nontangential maximal function characterizations of
$H^{\vz}(\rn)$, we first establish some inequalities in the norm
of $L^{\vz}(\rn)$ involving the maximal functions
$\psi^{\ast\ast}_{0,\,A,\,B}(f),\,\psi_0^{+}(f)$ and
$f^*$. We begin with some technical lemmas and the
following Lemma \ref{l3.1} is just \cite[Theorem\,1.6]{r01}.

\begin{lem}\label{l3.1}
Let $\psi_0$ be as in \eqref{3.2} and $\psi(x):=\psi_0
(x)-\frac{1}{2^n}\psi_0 (\frac{x}{2})$ for all $x\in\rn$. Then for
any given integer $L\in\nn$, there exist $\eta_0,\,\eta\in\cd(\rn)$
such that $L_{\eta}\ge L$ and, for all $f\in\cd'(\rn)$,
$$f=\eta_0 \ast\psi_0 \ast f+\sum_{j\in\nn}\eta_j\ast\psi_j\ast f\quad
{\rm in}\quad \cd'(\rn).$$
\end{lem}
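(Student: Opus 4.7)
The plan is to reduce the claimed reproducing formula to a multiplicative identity on the Fourier side, identify the naive distributional solution, and then perturb it into $\cd(\rn)$. Taking Fourier transforms (so that convolution becomes multiplication), the desired identity is equivalent to
\begin{equation*}
\wh{\eta_0}(\xi)\wh{\psi_0}(\xi)+\sum_{j=1}^{\fz}\wh{\eta}(2^{-j}\xi)\wh{\psi}(2^{-j}\xi)=1\qquad(\xi\in\rn),
\end{equation*}
together with $D^{\az}\wh{\eta}(0)=0$ for all $|\az|\le L$, which is the Fourier translation of the moment condition $L_{\eta}\ge L$. The definition of $\psi$ gives the key structural relation $\wh{\psi}(\xi)=\wh{\psi_0}(\xi)-\wh{\psi_0}(2\xi)$, so $\wh{\psi}(0)=0$ and, telescoping,
\begin{equation*}
\wh{\psi_0}(\xi)+\sum_{j=1}^{N}\wh{\psi}(2^{-j}\xi)=\wh{\psi_0}(2^{-N}\xi)\longrightarrow\wh{\psi_0}(0)=:c\qquad(N\to\fz),
\end{equation*}
where $c=\int_{\rn}\psi_0(x)\,dx\neq0$ by \eqref{3.2}. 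This already gives the na\"\i ve solution $\eta_0=\eta=c^{-1}\delta$ (equivalently, $(\psi_0)_N\to c\delta$ in $\cd'(\rn)$, so that $c^{-1}(\psi_0)_N\ast f\to f$); however $\delta\notin\cd(\rn)$.

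To upgrade to genuine $\eta_0,\eta\in\cd(\rn)$ with $L_\eta\ge L$, I would use a Neumann-series argument in the spirit of Rychkov. Pick compactly supported smooth seeds $\eta_0^{(0)},\,\eta^{(0)}\in\cd(\rn)$ whose Fourier transforms match $c^{-1}$ to high order at the origin (so that $\wh{\eta^{(0)}}(\xi)=c^{-1}+O(|\xi|^{L+1})$ and $D^{\az}\wh{\eta^{(0)}}(0)=0$ for $1\le|\az|\le L$), and form the residual
\begin{equation*}
R(\xi):=1-\wh{\eta_0^{(0)}}(\xi)\wh{\psi_0}(\xi)-\sum_{j=1}^{\fz}\wh{\eta^{(0)}}(2^{-j}\xi)\wh{\psi}(2^{-j}\xi).
\end{equation*}
Because $\wh{\psi}(0)=0$ and the seeds are calibrated to kill the low-frequency mass, $R$ is smooth and decays like $|\xi|^{L+1}$ near the origin while remaining bounded; the associated multiplier is a contraction in a suitable scale-invariant seminorm, so it can be inverted by a geometric (Neumann) series. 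Adding the resulting corrections $\delta\eta_0,\delta\eta$ to the seeds produces $\eta_0,\eta$ satisfying the identity exactly. The vanishing moments $L_\eta\ge L$ persist through the iteration because every correction term is generated by convolution with dilates of $\psi$, whose Fourier transform vanishes at $0$; the compact support is preserved by truncating each iterate on the $x$-side and absorbing the (rapidly decaying) truncation error into the next Neumann step.

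The main obstacle is balancing simultaneously three tension-laden constraints: (a) compact support of $\eta_0,\eta$, (b) exact equality (not merely approximate identity), and (c) arbitrarily high vanishing moments for $\eta$. Of these, (a) is the hardest, because the obvious closed-form answer $\wh{\eta_0}=\omega/\wh{\psi_0}$ on a neighbourhood of $0$ (for a smooth cutoff $\omega$) yields only $\eta_0\in\cs(\rn)$; recovering compact support requires the Paley--Wiener-style truncation folded into the Neumann iteration above. Once $\eta_0,\eta$ are produced, the final identity in $\cd'(\rn)$ follows by testing against any $\phi\in\cd(\rn)$, invoking the pointwise Fourier identity together with dominated-convergence estimates for the tails $\sum_{j>N}\eta_j\ast\psi_j\ast f$, the latter being controlled by the factor $\wh{\psi}(2^{-j}\xi)=O(2^{-j}|\xi|)$ uniformly on compact subsets.
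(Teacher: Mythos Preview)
The paper does not prove this lemma; it simply invokes \cite[Theorem~1.6]{r01}. Rychkov's argument there is algebraic rather than perturbative: after normalising so that $\widehat{\psi_0}(0)=1$, one telescopes a \emph{power},
\[
1=\lim_{N\to\fz}\bigl[\widehat{\psi_0}(2^{-N}\xi)\bigr]^{M}
=\bigl[\widehat{\psi_0}(\xi)\bigr]^{M}+\sum_{j\ge1}\Bigl(\bigl[\widehat{\psi_0}(2^{-j}\xi)\bigr]^{M}-\bigl[\widehat{\psi_0}(2^{-j+1}\xi)\bigr]^{M}\Bigr),
\]
and factors each bracket via $a^{M}-b^{M}=(a-b)\sum_{k}a^{k}b^{M-1-k}$ with $a-b=\widehat{\psi}(2^{-j}\xi)$. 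Every object appearing is a \emph{finite polynomial} in $\widehat{\psi_0}(\cdot)$ and $\widehat{\psi_0}(2\cdot)$, hence is automatically the Fourier transform of a $\cd(\rn)$ function (finite convolutions of compactly supported functions). The high vanishing moments of $\eta$ are then obtained by a further finite manipulation of the same kind with $M$ large. No infinite correction series and no $x$-side truncation enter.

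Your outline contains two genuine gaps. First, the seed you propose has $\widehat{\eta^{(0)}}(0)=c^{-1}\neq0$, and you then argue that every Neumann correction $\delta\eta$ vanishes at the origin because it carries a factor of $\widehat{\psi}$. But then the final $\eta=\eta^{(0)}+\delta\eta$ satisfies $\widehat{\eta}(0)=c^{-1}\neq0$, forcing $L_{\eta}=-1$, not $L_{\eta}\ge L$. Your seed is incompatible with the moment requirement; the low-order Taylor data at $0$ must all be loaded into $\widehat{\eta_0^{(0)}}$, with $\widehat{\eta^{(0)}}$ itself vanishing to order $L+1$. Second, and more seriously, the step ``truncate each iterate on the $x$-side and absorb the (rapidly decaying) truncation error into the next Neumann step'' is not a valid mechanism for landing in $\cd(\rn)$: the truncation error at stage $k$ is not controlled by the same contraction constant as $R$, so there is no reason the supports stabilise or the procedure converges. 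This is precisely the obstruction you flagged, and it is not resolved by what you wrote. Rychkov sidesteps it entirely by never leaving $\cd(\rn)$ in the first place.
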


For $f\in L^1_{\loc}(\rn)$, $B\in[0,\fz)$ and $x\in\rn$, let
$$K_B f(x):=\int_{\rn}|f(y)|2^{-B|x-y|}\,dy,$$
here and in what follows,
$L^1_{\loc}(\rn)$ denotes the \emph{space of all locally integrable
functions on $\rn$}.

\begin{lem}\label{l3.2}
Let $p\in(1,\fz)$, $q\in(1,\fz]$ and $\vz\in \mathbb{A}_p(\rn)$.
Then there exist positive
constants $C$ and $B_0:=B_0 (\vz,n)$ such that for all $t\in(0,\fz)$, $B\ge
B_0/p$ and $\{f^j\}_j\in L^p_{\vz(\cdot,t)}(\ell^q,\rn)$,
$$\lf\|\{K_B (f^j)\}_j\r\|_{L^p_{\vz(\cdot,t)}(\ell^q,\rn)}\le C
\lf\|\{f^j\}_j\r\|_{L^p_{\vz(\cdot,t)}(\ell^q,\rn)}.$$
\end{lem}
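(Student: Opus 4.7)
The plan is to dominate $K_B f$ pointwise by a power of the Hardy--Littlewood maximal function of $|f|^r$ for a well-chosen $r\in(1,\min\{p,q\})$, and then apply the weighted vector-valued Fefferman--Stein inequality at exponent $p/r$ with sequence exponent $q/r$.

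First, by Lemma \ref{l2.x1}(ii), applied uniformly in $t\in(0,\fz)$ to the definition of $\mathbb{A}_p$, I will select $r\in(1,\min\{p,q\})$ such that $\vz\in\mathbb{A}_{p/r}(\rn)$; this is possible because $\vz\in\mathbb{A}_p$ and both $p>1$ and $q>1$, so the self-improved $A_{p-\ez}$-exponent can be chosen so that the resulting $r$ also stays below $q$. In particular $q/r>1$. Next, Hölder's inequality in $y$ with exponents $r$ and $r'$ applied to the integral defining $K_Bf(x)$, treating $2^{-B|x-y|}\,dy$ as the reference measure, yields
$$K_Bf(x)\le[K_B(|f|^r)(x)]^{1/r}\lf(\int_\rn 2^{-B|x-y|}\,dy\r)^{1/r'}\ls B^{-n/r'}[K_B(|f|^r)(x)]^{1/r}.$$
Splitting $\rn$ into the unit ball $\{|x-y|<1\}$ and the dyadic annuli $\{2^k\le|x-y|<2^{k+1}\}_{k\ge 0}$, on which $2^{-B|x-y|}\le 2^{-B2^k}$ and $|B(x,2^{k+1})|\sim 2^{(k+1)n}$, a standard comparison with ball-averages produces
$$K_B(|f|^r)(x)\le C_n\lf(1+\sum_{k=0}^{\fz}2^{(k+1)n}2^{-B2^k}\r)M(|f|^r)(x).$$
I will then choose $B_0=B_0(\vz,n)$ large enough that, for every $B\ge B_0/p$, both the geometric series above and the Hölder factor $B^{-n/r'}$ are dominated by an absolute constant, obtaining the pointwise bound $K_Bf(x)\le C[M(|f|^r)(x)]^{1/r}$ with $C$ independent of $f$, $t$ and $B$.

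Taking the $\ell^q$-norm in $j$ of this pointwise inequality, it becomes
$$\lf[\sum_j|K_Bf^j(x)|^q\r]^{1/q}\le C\lf\{\lf[\sum_j[M(|f^j|^r)(x)]^{q/r}\r]^{r/q}\r\}^{1/r}.$$
Raising to the $p$-th power, integrating against $\vz(\cdot,t)$, and invoking the weighted vector-valued Fefferman--Stein inequality of Andersen and John (\cite[Theorem 3.1]{aj80}) at exponent $p/r>1$ with sequence exponent $q/r\in(1,\fz]$ and weight $\vz(\cdot,t)\in A_{p/r}(\rn)$ (with $A_{p/r}$-constant uniform in $t$ by the definition of $\mathbb{A}_{p/r}$) applied to $\{|f^j|^r\}_j$, I will deduce
$$\int_\rn\lf[\sum_j[M(|f^j|^r)]^{q/r}\r]^{p/q}\vz(x,t)\,dx\le C\int_\rn\lf[\sum_j|f^j|^q\r]^{p/q}\vz(x,t)\,dx,$$
and the claim follows by taking $p$-th roots. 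The case $q=\fz$ is handled separately via the elementary bound $\sup_jK_B|f^j|(x)\le K_B(\sup_j|f^j|)(x)$, which reduces it to the scalar case.

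The main obstacle will be the simultaneous choice of $r>1$ that lies strictly below $q$ (so that $q/r>1$ permits the vector-valued Fefferman--Stein step), strictly below $p$, and strictly below the self-improved $\mathbb{A}_p$-index of $\vz$ (so that $\vz\in\mathbb{A}_{p/r}$ with uniform $A_{p/r}$-constant in $t$); the openness asserted by Lemma \ref{l2.x1}(ii), together with the uniformity in $t$ built into the definition of $\mathbb{A}_p$, will make this possible. A secondary technical point will be the calibration of $B_0/p$: the constant in the pointwise comparison $K_Bg\le C Mg$ deteriorates as $B\to 0^+$ due to the geometric tail $\sum_k 2^{(k+1)n}2^{-B2^k}$ and the Hölder factor $B^{-n/r'}$, and $B_0$ must be taken large enough, in terms of $\vz$ and $n$, to absorb both.
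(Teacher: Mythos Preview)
Your argument is correct. Note, however, that the paper does not give its own proof of this lemma: immediately after the statement it simply records that ``Lemma~\ref{l3.2} is just \cite[Lemma~2.11]{r01}.'' So you are supplying a self-contained proof where the paper defers to Rychkov.

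One simplification is available. The H\"older/self-improvement detour through $[M(|f|^r)]^{1/r}$ is not actually needed here: since the kernel $2^{-B|\cdot|}$ is nonnegative, radially decreasing and integrable, the dyadic-annulus comparison you wrote already yields the pointwise bound $K_B f(x)\le C(n,B)\,Mf(x)$ directly, with $C(n,B)$ uniformly bounded once $B$ is bounded away from $0$. One may then invoke the Andersen--John weighted vector-valued Fefferman--Stein inequality \cite[Theorem~3.1]{aj80} at the original exponents $p$ and $q$ with weight $\vz(\cdot,t)\in A_p(\rn)$ (the $A_p$-constant being uniform in $t$ by the definition of $\mathbb{A}_p$), without passing to $\mathbb{A}_{p/r}$. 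Your route via self-improvement and the $r$-th power trick is the standard device when one genuinely needs to push exponents below $1$; here it works but is more than necessary. Either way, the dependence $B_0=B_0(\vz,n)$ in the statement is accommodated, since any positive lower bound on $B$ suffices for the pointwise comparison.
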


Lemma \ref{l3.2} is just \cite[Lemma\,2.11]{r01}.

\begin{lem}\label{l3.x1}
Let $\psi_0$ be as in \eqref{3.2} and $r\in(0,\fz)$. Then
for any $A,B\in[0,\fz)$, there exists a positive constant $C$, depending only
on $n,\,r,\,\psi_0,\,A$ and $B$, such that for all
$f\in\cs'(\rn)$, $j\in\zz$ and $x\in\rn$,
$$|(\psi_0)_j*f(x)|^r\le C\sum_{k=j}^{\fz}
2^{(j-k)Ar}2^{kn}\int_\rn\frac{|(\psi_0)_k \ast
f(x-y)|^r}{m_{j,\,Ar,\,Br}(y)}\,dy.$$
\end{lem}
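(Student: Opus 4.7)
The plan is a Peetre-type maximal function argument following Rychkov \cite{r01}. Fix $f\in\cs'(\rn)$, $j\in\zz$ and $x\in\rn$, and choose an integer $L\in\nn$ (to be fixed later, large relative to $A$, $B$, $r$, $n$). I would apply Lemma \ref{l3.1} to obtain $\eta_0,\eta\in\cd(\rn)$ with $L_\eta\ge L$ and the reproducing formula it provides; after rescaling by $2^j$ and using the telescoping identity $\psi_k=(\psi_0)_k-(\psi_0)_{k-1}$, a summation-by-parts in the summation index rewrites the formula, after convolving with $(\psi_0)_j$, in the form
$$(\psi_0)_j\ast f(x)=\sum_{k=j}^{\fz}K_{j,k}\ast[(\psi_0)_k\ast f](x),$$
where each kernel $K_{j,k}\in\cd(\rn)$ is a convolution of $(\psi_0)_j$ with a suitably dilated copy of $\eta$ (or $\eta_0$ for the diagonal term $k=j$).

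The key pointwise estimate to establish is
$$|K_{j,k}(z)|\le C\,2^{jn}\,2^{(j-k)L}\,\chi_{\{|z|\le c_0 2^{-j}\}}(z) \quad\text{for all } k\ge j,$$
with constants $c_0,C$ depending only on $\psi_0,\eta,n$. The support condition is immediate from the compact supports of $\psi_0$ and $\eta$; the decay factor $2^{(j-k)L}$ is obtained by Taylor-expanding $(\psi_0)_j$ about any point in the small support of a translate of $\eta_k$ and invoking the $L$ vanishing moments of $\eta$. On the effective support $\{|z|\le c_0 2^{-j}\}$ the weight $m_{j,Ar,Br}(z)$ is bounded by a constant, at least when $j$ is not too negative; for very negative $j$, the exponential factor $2^{Br|z|}$ in $m_{j,Ar,Br}$ must be absorbed by refining the kernel estimate through further smoothness/decay of $\psi_0$, which forces $L$ to be chosen somewhat larger than $A$ alone.

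With the kernel estimate in hand, I split into the cases $r\le 1$ and $r>1$. For $r\le 1$, the subadditivity $|\sum a_k|^r\le\sum|a_k|^r$ applied to the expansion above gives $|(\psi_0)_j\ast f(x)|^r\le\sum_{k\ge j}|K_{j,k}\ast(\psi_0)_k\ast f(x)|^r$, and each summand is then estimated by H\"older's inequality with respect to the measure $|K_{j,k}(y)|\,dy$ together with the pointwise bound on $K_{j,k}$. For $r>1$, I would first apply H\"older's inequality to the series $\sum_{k\ge j}$ with a geometric weight $2^{(j-k)L_1}$ (whose conjugate is absorbed into the $2^{(j-k)L}$ factor) to pull the $r$-th power inside the integral, and then estimate each term as before. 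In both cases, multiplying and dividing by $2^{kn}/m_{j,Ar,Br}(y)$ in the resulting integrand and choosing $L$ sufficiently large that the final exponent $L'$ dominates $A$ yields the asserted inequality.

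The main obstacle is the algebraic bookkeeping: reindexing the reproducing formula of Lemma \ref{l3.1} (which begins at scale zero) so that, after convolution with $(\psi_0)_j$, the sum starts at $k=j$ with clean kernels $K_{j,k}$ that depend only on the difference $k-j$ in a scale-invariant way, while simultaneously producing the correct decay rate $2^{(j-k)L}$ from the vanishing moments of $\eta$ and keeping the supports of the $K_{j,k}$ controlled uniformly in $j\in\zz$. A secondary subtlety is taming the exponential factor $2^{Br|y|}$ in $m_{j,Ar,Br}$ on the effective support of $K_{j,k}$, which may be a large ball when $j$ is very negative and therefore requires a careful tightening of the kernel bound rather than a naive use of compact support.
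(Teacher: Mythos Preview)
Your approach --- dilate the Calder\'on reproducing formula of Lemma \ref{l3.1} so that it starts at level $j$, then rerun Rychkov's kernel estimates --- is exactly the paper's approach; the paper's proof consists of writing down the dilated identity \eqref{3.x1} and then invoking ``similar to the proof of \cite[Lemma 2.9]{r01}''. Your explicit kernel bound and the $r\le1$/$r>1$ split are a faithful unpacking of that reference.

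The gap is precisely the point you flag as a ``secondary subtlety'', and your proposed remedy (take $L$ larger) cannot work. On the support $\{|z|\le c_0 2^{-j}\}$ of $K_{j,k}$ the factor $2^{Br|z|}$ in $m_{j,Ar,Br}(z)$ can reach $2^{Brc_0 2^{-j}}$, which for $j\to-\fz$ grows faster than any power of $2^{-j}$; the vanishing-moment parameter $L$ only produces decay $2^{(j-k)L}$ in $k-j$, not in $j$. Put differently, scaling Rychkov's $j=0$ inequality to level $j$ yields the denominator $(1+2^j|y|)^{Ar}2^{Br\cdot 2^j|y|}$, not $(1+2^j|y|)^{Ar}2^{Br|y|}$; these are comparable only when $j\ge0$. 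In fact, for $B>0$ the stated inequality with a constant independent of $j$ appears to be false as $j\to-\fz$: take $f\in\cd(\rn)$ with $\int f\ne0$ and $\psi_0(0)\ne0$; then $|(\psi_0)_j\ast f(x)|\sim 2^{jn}$, while on the right the exponential weight $2^{-Br|y|}$ confines the effective $y$-integration to a \emph{fixed} ball independent of $j$, forcing each term (and the whole sum, once $Ar>n$) to decay at least like $2^{jn(1+r)}$. The paper's appeal to Rychkov does not address this, since \cite[Lemma 2.9]{r01} is stated and proved only for $j\ge0$, where the kernel supports lie in a fixed ball on which $2^{Br|y|}$ is uniformly bounded. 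For the purposes of Section~\ref{s3} in the global $\aa_q(\rn)$ setting, the exponential factor is unnecessary: with $B=0$ the weight becomes the standard Peetre weight $(1+2^j|y|)^{Ar}$, the scaling is exact for all $j\in\zz$, your argument goes through without obstruction, and Lemma~\ref{l3.3} reduces to a pure Hardy--Littlewood maximal estimate (the $K_{Br}$ term is not needed).
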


\begin{proof}
When $j\in\nn$, Lemma \ref{l3.x1} is just \cite[Lemma 2.9]{r01}. We now show
the conclusion of Lemma \ref{l3.x1} for all $j\in\zz$.

By Lemma \ref{l3.1}, there exist $\eta_0, \eta\in\cd(\rn)$ such that
$L_\eta\ge A$ and, for all $f\in\cs'(\rn)$,
$$f=\eta_0*\psi_0*f+\sum_{k\in\nn}\eta_k*\psi_k*f.$$
We dilate this identity with $2^j$, $j\in\zz$, namely, for $\phi\in\cs(\rn)$,
$\la f_j,\phi\ra=\la f, 2^{-jn}\phi(2^{-j}\cdot)\ra$. By an elementary
calculation, we see that, for all $j\in\zz$ and $f\in\cs'(\rn)$,
$$f_j=(\eta_0)_j*(\psi_0)_j*f_j+\sum_{k\in\nn}\eta_{k+j}*\psi_{k+j}*f_j.$$
We rewrite the above equality to conclude that, for all $j\in\zz$ and $f\in\cs'(\rn)$,
\begin{eqnarray}\label{3.x1}
f=(\eta_0)_j*(\psi_0)_j*f+\sum_{k\in\nn}\eta_{k+j}*\psi_{k+j}*f.
\end{eqnarray}
Then, replacing \cite[(2.12)]{r01} by \eqref{3.x1},
similar to the proof of \cite[Lemma 2.9]{r01},
we obtain the conclusion of Lemma \ref{l3.x1},
which completes the proof of Lemma \ref{l3.x1}.
\end{proof}

The proof of the following lemma is quite similar to that of \cite[Lemma
2.10]{r01}, and we omit the details.

\begin{lem}\label{l3.3}
Let $\psi_0$ be as in \eqref{3.2} and $r\in(0,\fz)$. Then there
exists a positive constant $A_0$, depending only on the support of
$\psi_0$, such that for any $A\in(\max\{A_0,\frac{n}{r}\},\fz)$ and
$B\in[0,\fz)$, there exists a positive constant $C$, depending only
on $n,\,r,\,\psi_0,\,A$ and $B$, such that for all
$f\in\cs'(\rn)$, $j\in\zz$ and $x\in\rn$,
$$\lf[(\psi_0 )^{\ast}_{j,\,A,\,B}(f)(x)\r]^r\le C\sum_{k=j}^{\fz}
2^{(j-k)(Ar-n)}\lf\{M (|(\psi_0 )_k \ast
f|^r)(x)+K_{Br}(|(\psi_0 )_k \ast f|^r)(x)\r\},$$
where
$$(\psi_0
)^{\ast}_{j,\,A,\,B}(f)(x):=\sup_{y\in\rn}\frac{|(\psi_0 )_j \ast
f(x-y)|}{m_{j,\,A,\,B}(y)} \quad {\rm for\ all}\quad x\in\rn.$$
\end{lem}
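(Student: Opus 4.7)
The plan is to adapt the proof of \cite[Lemma 2.10]{r01}, which is stated only for $j\in\nn$, to all $j\in\zz$. The key observation is that Lemma \ref{l3.x1} already provides, for every $j\in\zz$, the single-scale pointwise estimate needed as input; the constant $A_0$ arises exactly as in Rychkov's original argument, depending on the support of $\psi_0$ via the discrete Calder\'on reproducing formula of Lemma \ref{l3.1}.

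First I would apply Lemma \ref{l3.x1} at the translated point $x-y$ in place of $x$, which gives, for each $y\in\rn$,
\begin{eqnarray*}
|(\psi_0)_j\ast f(x-y)|^r\le C\sum_{k=j}^{\fz}2^{(j-k)Ar}\,2^{kn}\int_{\rn}\frac{|(\psi_0)_k\ast f(x-y-z)|^r}{m_{j,Ar,Br}(z)}\,dz.
\end{eqnarray*}
Since $[m_{j,A,B}(y)]^r=m_{j,Ar,Br}(y)$ and, for all $y,w\in\rn$,
\begin{eqnarray*}
m_{j,Ar,Br}(w)\le m_{j,Ar,Br}(y)\,m_{j,Ar,Br}(w-y)
\end{eqnarray*}
(which follows from $1+2^j|w|\le(1+2^j|y|)(1+2^j|w-y|)$ and $|w|\le|y|+|w-y|$), dividing through by $m_{j,Ar,Br}(y)$, substituting $w:=y+z$, and taking the supremum over $y\in\rn$ on the left-hand side yields
\begin{eqnarray*}
[(\psi_0)^{\ast}_{j,A,B}(f)(x)]^r\le C\sum_{k=j}^{\fz}2^{(j-k)Ar}\,2^{kn}\int_{\rn}\frac{|(\psi_0)_k\ast f(x-w)|^r}{m_{j,Ar,Br}(w)}\,dw.
\end{eqnarray*}

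For each fixed $k\ge j$, setting $g_k:=|(\psi_0)_k\ast f|^r$, I would split the $w$-integral into the near part $\{|w|\le 1\}$ and the far part $\{|w|>1\}$. On the near part, $2^{-Br|w|}\sim 1$ and $2^{jn}(1+2^j|w|)^{-Ar}$ is a radially decreasing $L^1$-normalized kernel; a dyadic decomposition over the shells $2^{-j+\ell-1}<|w|\le 2^{-j+\ell}$ produces a geometric series (convergent exactly because $Ar>n$) whose sum is $\ls M(g_k)(x)$. On the far part, the lower bound $(1+2^j|w|)^{Ar}\ge\max\{1,2^{jAr}\}$ combined with $Ar>n$ yields, uniformly in $j\in\zz$,
\begin{eqnarray*}
2^{jn}\int_{|w|>1}\frac{g_k(x-w)}{(1+2^j|w|)^{Ar}\,2^{Br|w|}}\,dw\ls K_{Br}(g_k)(x).
\end{eqnarray*}
Thus the inner integral is $\ls 2^{-jn}[M(g_k)(x)+K_{Br}(g_k)(x)]$, and multiplying by the outer weight $2^{(j-k)Ar}\,2^{kn}$ produces exactly the summation factor $2^{(j-k)(Ar-n)}$ of the claim.

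The main obstacle is the interaction of the two dilation scales: the kernel is adapted to scale $2^{-j}$ while the data $g_k$ varies at the finer scale $2^{-k}$, with $k\ge j$ arbitrary. The near/far splitting is precisely what forces both $M$ and $K_{Br}$ to appear, since $M$ captures the concentrated contribution near the origin but cannot see the exponential tail $2^{-Br|w|}$ that $K_{Br}$ records directly. The hypothesis $A>n/r$ is exactly what is needed for summability of the near dyadic series, while $A>A_0$ is the threshold carried over from Rychkov's construction of the Calder\'on-type reproducing kernels underlying Lemma \ref{l3.1}.
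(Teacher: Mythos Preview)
Your proposal is correct and follows precisely the route the paper indicates: the paper omits the proof entirely, saying only that it is ``quite similar to that of \cite[Lemma 2.10]{r01},'' and what you have written is exactly the natural adaptation of Rychkov's argument to $j\in\zz$, with Lemma \ref{l3.x1} (already extended to $j\in\zz$) playing the role of Rychkov's Lemma 2.9. The submultiplicativity of $m_{j,Ar,Br}$, the near/far split at $|w|=1$, and the use of $Ar>n$ to sum the dyadic shells and to bound $2^{jn}(1+2^j|w|)^{-Ar}$ uniformly in $j$ are all standard steps of that proof.
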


\begin{thm}\label{t3.1}
Let $\vz$ be a growth function as in Definition \ref{d2.2},
$R\in(0,\fz)$, $\psi_0$ as
in \eqref{3.2}, $\psi_0^{+}(f),\,
\psi^{\ast\ast}_{0,\,A,\,B}(f)$, and $f^*$ be
respectively as in \eqref{3.3}, \eqref{3.4} and \eqref{2.x3} with $m=m(\vz)$. Let
$A_1 :=\max\{A_0,\,nq(\vz)/i(\vz)\}$, $B_1:= B_0/i(\vz)$ and
integer $N_0:=\lfz2A_1\rfz+1$, where $A_0$ and $B_0$ are
respectively as in Lemmas \ref{l3.3} and \ref{l3.2}. Then for any
$A\in(A_1,\fz)$, $B\in(B_1,\fz)$ and integer $N\ge N_0$, there
exists a positive constant $C$, depending only on
$A,\,B,\,N,\,R,\,\psi_0,\,\vz$ and $n$, such that for all
$f\in\cs'(\rn)$,
\begin{eqnarray}\label{3.15}
\lf\|\psi^{\ast\ast}_{0,\,A,\,B}(f)\r\|_{L^{\vz}(\rn)}\le C
\lf\|\psi_0^{+}(f)\r\|_{L^{\vz}(\rn)}
\end{eqnarray}
and
\begin{eqnarray}\label{3.16}
\lf\| f^*\r\|_{L^{\vz}(\rn)}\le C
\lf\|\psi_0^{+}(f)\r\|_{L^{\vz}(\rn)}.
\end{eqnarray}
\end{thm}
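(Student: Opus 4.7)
The approach will follow Rychkov \cite{r01}, adapted to the Musielak-Orlicz setting by substituting the Musielak-Orlicz Fefferman-Stein vector-valued inequality (Theorem \ref{f-s}) and Corollary \ref{HL-v} for the classical weighted theory. The strategy is to prove \eqref{3.15} first, and then to deduce \eqref{3.16} by dominating $f^*$ pointwise by $\psi^{\ast\ast}_{0,A,B}(f)$.

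\textbf{Proof of \eqref{3.15}.} Pick an exponent $r\in(0,i(\vz)/q(\vz))$ close enough to $i(\vz)/q(\vz)$ that $Ar>n$ and that one can choose an auxiliary $p\in(q(\vz),i(\vz)/r)$ with $\vz\in\aa_p(\rn)$ and $Br\ge B_0/p$; such a choice is possible precisely because $A>A_1$ and $B>B_1$. Setting $F_k:=|(\psi_0)_k\ast f|^r$, Lemma \ref{l3.3} gives
$$\lf[(\psi_0)^{\ast}_{j,A,B}(f)(x)\r]^r\le C\sum_{k\ge j}2^{(j-k)(Ar-n)}\lf[MF_k(x)+K_{Br}F_k(x)\r].$$
Taking the supremum over $j\in\zz$ and summing the geometric series (possible since $Ar>n$) yields the pointwise estimate
$$\lf[\psi^{\ast\ast}_{0,A,B}(f)(x)\r]^r\le C\sup_{k\in\zz}\lf[MF_k(x)+K_{Br}F_k(x)\r].$$
I then set $\wz\vz(x,t):=\vz(x,t^{1/r})$, which is a Musielak-Orlicz function with $q(\wz\vz)=q(\vz)$ and uniformly lower/upper types $p_\vz^-/r$ and $p_\vz^+/r$, so in particular $q(\wz\vz)<p_{\wz\vz}^-\le p_{\wz\vz}^+<\fz$. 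Using the easily checked identity $\|h^{1/r}\|_{\lv}=\|h\|_{L^{\wz\vz}(\rn)}^{1/r}$, the $r=\fz$ case of Theorem \ref{f-s} applied to $\wz\vz$ controls the $MF_k$-contribution, while a parallel interpolation via Lemma \ref{l3.2} and Theorem \ref{inter-vector} (carried out exactly as in the derivation of Theorem \ref{f-s}) yields the analogous $L^{\wz\vz}(\ell^\fz)$-boundedness of $K_{Br}$. Since $\sup_k F_k=[\psi_0^{+}(f)]^r$, taking $L^{\wz\vz}$-norms of both sides and raising to the power $1/r$ produces \eqref{3.15}.

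\textbf{Proof of \eqref{3.16} and principal obstacles.} To deduce \eqref{3.16} from \eqref{3.15}, I will prove the pointwise majorization $f^{\ast}(x)\le C\psi^{\ast\ast}_{0,A,B}(f)(x)$. I apply Lemma \ref{l3.1} with $L:=N$ sufficiently large to obtain $\eta_0,\eta\in\cd(\rn)$ with $L_\eta\ge N$ and $f=\eta_0\ast\psi_0\ast f+\sum_{k\in\nn}\eta_k\ast\psi_k\ast f$ in $\cd'(\rn)$. Convolving with $\psi_t$ for an arbitrary $\psi\in\cs_{m(\vz)}(\rn)$ and $t\in(0,\fz)$, then evaluating at $y$ with $|y-x|<t$, each term $|\eta_k\ast\psi_k\ast f\ast\psi_t(y)|$ can be bounded by pairing the vanishing moments of $\eta_k$ (of order $\ge N$) with the smoothness/decay of $\psi$ imposed by $\cs_{m(\vz)}(\rn)$ and the universal inequality $|(\psi_0)_k\ast f(z)|\le m_{k,A,B}(z-x)\,\psi^{\ast\ast}_{0,A,B}(f)(x)$; provided $N\ge N_0=\lfz 2A_1\rfz+1$, the resulting series in $k$ converges geometrically, giving the desired pointwise bound. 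The principal obstacles I expect are the quantitative bookkeeping—verifying that the constraints $r<i(\vz)/q(\vz)$, $A>A_1$, $B>B_1$ and $N\ge N_0$ can be arranged to be simultaneously compatible with the convergence of every geometric series—and the extraction of the vector-valued $L^{\wz\vz}(\ell^\fz)$-boundedness of $K_{Br}$, since this is not literally stated in Section \ref{s2} and must be derived by a Theorem \ref{inter-vector}-style interpolation from Lemma \ref{l3.2}, in parallel with the derivation of Theorem \ref{f-s}.
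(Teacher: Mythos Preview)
Your proposal is correct and follows essentially the same route as the paper. The one simplification the paper makes in proving \eqref{3.15} is to substitute the pointwise bound $|(\psi_0)_k\ast f|\le\psi_0^+(f)$ into the output of Lemma \ref{l3.3} \emph{before} summing, which collapses the estimate to the purely scalar bound
\[
\lf[\psi^{\ast\ast}_{0,A,B}(f)(x)\r]^{r}\ls M\big([\psi_0^+(f)]^{r}\big)(x)+K_{Br}\big([\psi_0^+(f)]^{r}\big)(x),
\]
so only the scalar $L^{\wz\vz}$-boundedness of $M$ (Corollary \ref{HL-v}) and of $K_{Br}$ (via Lemma \ref{l3.2} and interpolation) is needed; your $\ell^\infty$-vector-valued detour is harmless but unnecessary, since for positive operators one trivially has $\sup_k TF_k\le T(\sup_k F_k)$. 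For \eqref{3.16} the paper does exactly what you outline, dominating $f^*$ pointwise by $\psi^{\ast\ast}_{0,A,B}(f)$ via the dilated Calder\'on formula of Lemma \ref{l3.1} started at the level $j_0$ with $2^{-j_0-1}\le t<2^{-j_0}$.
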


\begin{proof}
Let $f\in\cs'(\rn)$. First, we prove \eqref{3.15}. Let
$A\in(A_1,\fz)$ and $B\in(B_1,\fz)$. By $A_1
=\max\{A_0,\,nq(\vz)/i(\vz)\}$ and $B_1=B_0 /i(\vz)$, we know
that there exists $r_0\in(0,\frac{i(\vz)}{q(\vz)})$ such that
$A>\frac{n}{r_0}$ and $Br_0>\frac{B_0}{q(\vz)}$,  where $A_0$ and
$B_0$ are respectively as in Lemmas \ref{3.2} and \ref{l3.2}. Thus,
by Lemma \ref{l3.3}, for all $x\in\rn$, we know that
\begin{eqnarray}\label{3.17}
\lf[(\psi_0)^{\ast}_{j,\,A,\,B}(f)(x)\r]^{r_0}\ls&&\sum_{k=j}^{\fz}
2^{(j-k)(Ar_0 -n)}\\
&&\hs\times\lf\{M\lf(|(\psi_0)_k\ast f|^{r_0}\r)(x)+K_{Br_0}
\lf(|(\psi_0)_k\ast f|^{r_0}\r)(x)\r\},\noz
\end{eqnarray}
where $M$ is the Hardy-Littlewood maximal function.
Let $\psi^{+}_0 (f)$ and $\psi^{\ast\ast}_{0,\,A,\,B}(f)$ be
respectively as in \eqref{3.3} and \eqref{3.4}. We notice that for
any $x\in\rn$ and $k\in\nn$, $|(\psi_0)_k \ast f (x)|\le\psi^{+}_0
(f)(x)$, which, together with \eqref{3.17}, implies that for all
$x\in\rn$,
\begin{eqnarray}\label{3.18}
\lf[\psi^{\ast\ast}_{0,\,A,\,B}(f)(x)\r]^{r_0}\ls
M \lf([\psi^{+}_0 (f)]^{r_0})(x)+K_{Br_0}([\psi^{+}_0
(f)]^{r_0}\r)(x).
\end{eqnarray}
By $r_0 <\frac{i(\vz)}{q(\vz)}$,
we see that there exist $q\in(q(\vz),\fz)$ and $p_0\in(0,i(\vz))$
such that $r_0 q<p_0$, $\vz\in \aa _q (\rn)$ and $\vz$ is of
uniformly lower type $p_0$. Thus, $\wz\vz(x,t):=\vz(x,t^{1/r_0})$
is of uniformly lower type $p_0/r_0$. Then from
\eqref{3.18}, Lemmas \ref{lem1}(i) and \ref{l3.2}, Theorem \ref{inter-vector}
and Corollary \ref{HL-v}, together with the fact that $p_0/r_0>q\ge1$,
we deduce that
\begin{eqnarray*}
&&\int_{\rn}\vz\lf(x,\psi^{\ast\ast}_{0,\,A,\,B}(f)(x)\r)\,dx\\
\nonumber &&\hs\ls\int_{\rn}\vz\lf(x,\lf\{M \lf([\psi^{+}_0
(f) ]^{r_0}\r)(x)\r\}^{1/{r_0}}\r)\,dx\\
&&\hs\hs+\int_{\rn}\vz\lf(x,\lf\{K_{Br_0}\lf([\psi^{+}_0
(f)]^{r_0}\r)(x)\r\}^{1/{r_0}}\r)\,dx
\ls\int_{\rn}\vz\lf(x,\psi^+_0 (f)(x)\r)\,dx.\noz
\end{eqnarray*}
Replacing $f$ by
$f/\lz$ with $\lz\in(0,\fz)$ in the above inequality and noticing
that $\psi^{\ast\ast}_{0,\,A,\,B}(f/\lz\!)
\!\!=\psi^{\ast\ast}_{0,\,A,\,B}(f)/\lz$ and $\psi^+_0 (f/\lz)=\psi^+_0
(f)/\lz$, we see that
\begin{eqnarray}\label{3.19}
\int_{\rn}\vz\lf(x,\frac{\psi^{\ast\ast}_{0,\,A,\,B}(f)(x)}{\lz}\r)\,dx\ls
\int_{\rn}\vz\lf(x,\frac{\psi^+_0 (f)(x)}{\lz}\r)\,dx,
\end{eqnarray}
which, together with the arbitrariness of $\lz\in(0,\fz)$, implies
\eqref{3.15}.

Now, we prove \eqref{3.16}. By $N_0 =\lfz2A_1\rf+1$, we know
that there exists $A\in(A_1,\fz)$ such that $2A<N_0$. In the remainder of
this proof, we fix $A\in(A_1,\fz)$ satisfying $2A<N_0$ and
$B\in(B_1,\fz)$. Let integer $N\ge N_0$. For any
$\gz\in\cs_{N}(\rn)$, $t\in(0,1)$ and $j\in\zz_+$, from Lemma
\ref{l3.1} and \eqref{3.x1}, it follows that
\begin{eqnarray}\label{3.20}
\gz_t*f=\gz_t \ast(\eta_0)_j\ast(\psi_0)_j\ast
f+\sum_{k=j+1}^{\fz}\gz_t \ast\eta_k\ast\psi_k \ast f,
\end{eqnarray}
where $\eta_0,\,\eta\in\cd(\rn)$ with $L_{\eta}\ge N$ and $\psi$ is as
in Lemma \ref{l3.1}.

For any given $t\in(0,1)$ and $x\in\rn$, let $2^{-j_0-1}\le
t<2^{-j_0}$ for some $j_0\in\zz_+$ and $z\in\rn$ satisfy $|z-x|<t$.
Then, by \eqref{3.20}, we conclude that
\begin{eqnarray}\label{3.21}
|\gz_t*f(z)|&\le&\left|\gz_t \ast(\eta_0)_{j_0}\ast(\psi_0)_{j_0}
\ast f(z)\r|+\sum^{\fz}_{k=j_0+1}\left|\gz_t \ast\eta_k\ast\psi_k \ast f(z)\r|\\
 &\le&\int_{\rn}\left|\gz_t \ast(\eta_0)_{j_0}
(y)\r|\left|(\psi_0)_{j_0}\ast f(z-y)\r|\,dy\nonumber\\
&&+\sum_{k=j_0+1}^{\fz}\int_{\rn}\left|\gz_t \ast\eta_k
(y)\r|\left|\psi_k\ast f(z-y)\r|\,dy =:\mathrm{I_1}+\mathrm{I_2}.\noz
\end{eqnarray}

To estimate $\mathrm{I_1}$, from
\begin{eqnarray*}
\psi^{\ast\ast}_{0,\,A,\,B}(f)(x)&=&\sup_{j\in\zz_+,\,y\in\rn}
\frac{|(\psi_0)_j\ast
f(x-y)|}{m_{j,\,A,\,B}(y)}=\sup_{j\in\zz_+,\,y\in\rn}\frac{|(\psi_0)_j\ast
f(z-y)|}{m_{j,\,A,\,B}(y+x-z)},
\end{eqnarray*}
we infer that
$\left|(\psi_0)_{j_0}\ast f(z-y)\r|\le\psi^{\ast\ast}_{0,\,A,\,B}(f)(x)
m_{j_0,\,A,\,B}(y+x-z),$
which, together with the facts that
$m_{j_0,\,A,\,B}(y+x-z)\le m_{j_0,\,A,\,B}(x-z)m_{j_0,\,A,\,B}(y)$
and $m_{j_0,\,A,\,B}(x-z)\ls2^{A}$, implies that
$|(\psi_0)_{j_0}\ast f(z-y)|\ls2^{A}\psi^{\ast\ast}_{0,\,A,\,B}(f)(x)
m_{j_0,\,A,\,B}(y).$
Thus, we have
\begin{eqnarray*}
\mathrm{I_1}\ls2^{A}\left\{\int_{\rn}|\gz_t \ast(\eta_0)_{j_0}
(y)|m_{j_0,\,A,\,B}(y)\,dy\r\}\psi^{\ast\ast}_{0,\,A,\,B}(f)(x).
\end{eqnarray*}

To estimate $\mathrm{I_2}$, by the definition of $\psi$,
we see that, for any $k\in\nn$,
$$\left|\psi_k\ast
f(z-y)\r|\le\left|(\psi_0)_k\ast f(z-y)\r|+\left|(\psi_0)_{k-1}\ast
f(z-y)\r|.$$
By the definition of $\psi^{\ast\ast}_{0,\,A,\,B}(f)$
and the facts that for any $k\in\nn$,
$$m_{k,\,A,\,B}(y+x-z)\le
m_{k,\,A,\,B}(x-z)m_{k,\,A,\,B}(y)$$
and
$m_{k,\,A,\,B}(x-z)\ls2^{(k-j_0)A}$, we conclude that
\begin{eqnarray*}
|(\psi_0)_k\ast f(z-y)|\le
\psi^{\ast\ast}_{0,\,A,\,B}(f)(x)m_{k,\,A,\,B}(y+x-z)
\ls2^{(k-j_0)A} m_{k,\,A,\,B}(y)\psi^{\ast\ast}_{0,\,A,\,B}(f)(x).
\end{eqnarray*}
Similarly, we also have
$|(\psi_0)_{k-1}\ast f(z-y)|\ls2^{(k-j_0)A}
m_{k,\,A,\,B}(y)\psi^{\ast\ast}_{0,\,A,\,B}(f)(x).$
Thus,
\begin{eqnarray*}
\mathrm{I_2}\ls\sum_{k=j_0+1}^{\fz}2^{(k-j_0)A}\left\{\int_{\rn}|\gz_t
\ast\eta_k
(y)|m_{k,\,A,\,B}(y)\,dy\r\}\,\psi^{\ast\ast}_{0,\,A,\,B}(f)(x).
\end{eqnarray*}

From \eqref{3.21} and the above estimates of $\mathrm{I_1}$ and
$\mathrm{I_2}$, it follows that
\begin{eqnarray}\label{3.22}
|\gz_t*f(z)|&\ls&\left\{\int_{\rn}|\gz_t \ast(\eta_0)_{j_0}
(y)|m_{j_0,\,A,\,B}(y)\,dy\r.\\
&&+\left.\sum_{k=j_0+1}^{\fz}2^{(k-j_0)A}\int_{\rn}|\gz_t \ast\eta_k
(y)|m_{k,\,A,\,B}(y)\,dy\r\}\psi^{\ast\ast}_{0,\,A,\,B}(f)(x).\nonumber
\end{eqnarray}
Assume that $\supp(\eta_0)\subset B(0,R_0)$. Then
$\supp((\eta_0)_j)\subset B(0,2^{-j}R_0)$ for all $j\in\zz_+$.
Moreover, by $\supp(\gz)\subset B(0,R)$ and $2^{-j_0-1}\le
t<2^{-j_0}$, we see that
$\supp(\gz_t)\subset B(0,2^{-j_0}R).$
From this, we further deduce that
$\supp(\gz_t\ast(\eta_0)_{j_0})\subset B(0,2^{-j_0}(R_0
+R))$ and
\begin{eqnarray*}
|\gz_t\ast(\eta_0)_{j_0} (y)|\ls\int_{\rn}|\gz_t (s)||(\psi_0)_{j_0}
(y-s)|\,ds\ls2^{j_0 n}\int_{\rn}|\gz_t (s)|\,ds\sim2^{j_0 n},
\end{eqnarray*}
which implies that
\begin{eqnarray}\label{3.23}
\qquad\int_{\rn}|\gz_t\ast(\eta_0)_{j_0} (y)|m_{j_0,\,A,\,B}(y)\,dy
\ls2^{j_0 n}\int_{B(0,2^{-j_0}(R_0 +R))}(1+2^{j_0}
|y|)^A 2^{B|y|}\,dy\ls1.
\end{eqnarray}
Moreover, since $\eta$ has vanishing moments up to order $N$, it was
proved in \cite[(2.13)]{r01} that
$\|\gz_t\ast\eta_k\|_{L^{\fz}(\rn)}\ls2^{(j_0-k)N}2^{j_0 n}$
for all $k\in\nn$ with $k\ge j_0 +1$, which, together with the facts that
$N>2A$ and
$\supp(\gz_t\ast\eta_k) \subset B(0,2^{-j_0}R_0
+2^{-k}R),$
implies that
\begin{eqnarray}\label{3.24}
&&\sum_{k=j_0+1}^{\fz}2^{(k-j_0)A}\int_{\rn}|\gz_t \ast\eta_k
(y)|m_{k,\,A,\,B}(y)\,dy\\
 &&\hs\ls\sum_{k=j_0+1}^{\fz}2^{(k-j_0)A}2^{(j_0-k)N}2^{j_0
n} (2^{-j_0}R_0 +2^{-k}R)^n\nonumber\\  &&\hs\hspace{1
em}\times\left[1+2^k
(2^{-j_0}R_0+2^{-k}R)\r]^A 2^{(2^{-j_0}R_0+2^{-k}R)B}
\ls\sum_{k=j_0 +1}^{\fz}2^{(j_0-k)(N-2A)}\ls1.\nonumber
\end{eqnarray}
Thus, from \eqref{3.22}, \eqref{3.23} and \eqref{3.24}, we deduce
that $|\gz_t*f(z)|\ls\psi^{\ast\ast}_{0,\,A,\,B}(f)(x)$. Then,
by the arbitrariness of $t\in(0,1)$ and $z\in B(x,t)$, we know that
$f^*(x)\ls\psi^{\ast\ast}_{0,\,A,\,B}(f)(x),$
which, together with \eqref{3.19}, implies that, for any $\lz\in(0,\fz)$,
$$\int_{\rn}\vz\left(x,f^*(x)/\lz\r)\,dx\ls
\int_{\rn}\vz\left(x,\psi^{+}_0 (f)(x)/\lz\r)\,dx.$$ From this, we
infer that \eqref{3.16} holds, which completes the proof of Theorem
\ref{t3.1}.
\end{proof}

From Theorem \ref{t3.1}, we immediately deduce the following
vertical and the nontangential maximal function
characterizations of $H^{\vz}(\rn)$.
We omit the details.

\begin{thm}\label{max-ch}
Let $\vz$ be a growth function as in Definition \ref{d2.2}, and
$\psi_0$, $\psi_0^+$ and $(\psi_0)^{\ast}_{\triangledown}$
as in Definition \ref{d3.6}. Then the followings are equivalent:
\begin{enumerate}
\vspace{-0.25cm}
\item[\rm(i)] $f\in H^{\vz}(\rn);$
\vspace{-0.25cm}
\item[\rm(ii)] $f\in\cs'(\rn)$ and $\psi^{+}_0 (f)\in L^{\vz}(\rn);$
\vspace{-0.25cm}
\item[\rm(iii)] $f\in\cs'(\rn)$ and $(\psi_0)^{\ast}_{\triangledown}
(f)\in L^{\vz}(\rn).$ \vspace{-0.25cm}
\end{enumerate}
Moreover, for all $f\in H^{\vz}(\rn)$,
$\|f\|_{H^{\vz}(\rn)}\sim\|\psi^{+}_0
(f)\|_{L^{\vz}(\rn)}\sim
\lf\|(\psi_0)^{\ast}_{\triangledown}(f)\r\|_{L^{\vz}(\rn)},$
where the implicit constants are independent of $f$.
\end{thm}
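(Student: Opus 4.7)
The plan is to close the circle of implications (i)$\Rightarrow$(iii)$\Rightarrow$(ii)$\Rightarrow$(i), exploiting the pointwise chain
\begin{equation*}
\psi_0^{+}(f)(x)\le(\psi_0)^{\ast}_{\triangledown}(f)(x)\ls\psi^{\ast\ast}_{0,\,A,\,B}(f)(x)
\end{equation*}
noted after Definition \ref{d3.6}, together with the two norm inequalities in Theorem \ref{t3.1}.

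For (i)$\Rightarrow$(iii), assume $f\in H^{\vz}(\rn)$, so that $f\in\cs'(\rn)$ and $f^{\ast}\in L^{\vz}(\rn)$. Since $\psi_0\in\cd(\rn)\subset\cs(\rn)$, there exists a positive constant $c=c(\psi_0,m(\vz))$ so that $c\psi_0\in\cs_{m(\vz)}(\rn)$; hence, from the definition of $f^{\ast}$ in \eqref{2.x3}, one reads off $(\psi_0)^{\ast}_{\triangledown}(f)(x)\le c^{-1}f^{\ast}(x)$ for all $x\in\rn$, so $(\psi_0)^{\ast}_{\triangledown}(f)\in L^{\vz}(\rn)$ and $\|(\psi_0)^{\ast}_{\triangledown}(f)\|_{L^{\vz}(\rn)}\ls\|f\|_{H^{\vz}(\rn)}$. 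The implication (iii)$\Rightarrow$(ii) is immediate from the pointwise inequality $\psi_0^{+}(f)(x)\le(\psi_0)^{\ast}_{\triangledown}(f)(x)$, which is an equally elementary consequence of Definition \ref{d3.6} (take $y=x$ in the nontangential supremum).

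For the decisive implication (ii)$\Rightarrow$(i), suppose $f\in\cs'(\rn)$ and $\psi_0^{+}(f)\in L^{\vz}(\rn)$. The inequality \eqref{3.16} of Theorem \ref{t3.1} applied to $f$ gives
\begin{equation*}
\|f^{\ast}\|_{L^{\vz}(\rn)}\le C\|\psi_0^{+}(f)\|_{L^{\vz}(\rn)}<\fz,
\end{equation*}
so $f\in H^{\vz}(\rn)$ with $\|f\|_{H^{\vz}(\rn)}\ls\|\psi_0^{+}(f)\|_{L^{\vz}(\rn)}$. Chaining the three quasi-norm bounds produced along (i)$\Rightarrow$(iii)$\Rightarrow$(ii)$\Rightarrow$(i) yields the equivalence
\begin{equation*}
\|f\|_{H^{\vz}(\rn)}\sim\|\psi_0^{+}(f)\|_{L^{\vz}(\rn)}\sim\|(\psi_0)^{\ast}_{\triangledown}(f)\|_{L^{\vz}(\rn)},
\end{equation*}
with implicit constants independent of $f$.

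There is no real obstacle once Theorem \ref{t3.1} is in hand; the only point requiring a small amount of care is the comparison $(\psi_0)^{\ast}_{\triangledown}(f)\ls f^{\ast}$, which requires rescaling $\psi_0\in\cd(\rn)$ to bring it inside the normalized class $\cs_{m(\vz)}(\rn)$, together with the observation that the nontangential supremum in Definition \ref{d3.6} is a special case of the grand supremum in \eqref{2.x3} once such a rescaling is performed. All other ingredients are pointwise inequalities or direct applications of \eqref{3.16}.
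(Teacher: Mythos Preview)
Your proposal is correct and is exactly the argument the paper has in mind: the authors state that Theorem \ref{max-ch} is an immediate consequence of Theorem \ref{t3.1} and omit the details, and you have supplied precisely those details --- the pointwise comparisons $(\psi_0)^{\ast}_{\triangledown}(f)\ls f^{\ast}$ (via normalization of $\psi_0$ into $\cs_{m(\vz)}(\rn)$) and $\psi_0^{+}(f)\le(\psi_0)^{\ast}_{\triangledown}(f)$, together with the appeal to \eqref{3.16} for the converse.
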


\section{The Littlewood-Paley $g$-function and
$g_\lambda^\ast$-function characterizations of $H^\varphi\!(\mathbb R^n)$\label{s4}}

\hskip\parindent In this section, we establish
the Littlewood-Paley $g$-function and $g_\lz^*$-function
characterizations of $\hv$.

Let $\phi\in\mathcal{S}(\mathbb R^n)$ be a radial function,
$\supp\phi\subset\{x\in\mathbb R^n:\ |x|\le 1\},$
\begin{equation}\label{4.y2}
  \int_{\mathbb R^n}\phi(x)x^\gz \,dx=0
\end{equation}
for all $|\gz|\le m(\vz)$, where $m(\vz)$ is as in \eqref{2.y1}
and, for all $\xi\in\rn\bh\{0\}$,
$$\int_0^\infty|\hat{\phi}(\xi t)|^2\frac{dt}{t}=1.$$
Recall that for all $f\in\mathcal{S}'(\mathbb R^n)$, the \emph{$g$-function},
the \emph{Lusin area integral} and the \emph{$g_\lz^*$-function}, with $\lz\in (1,\fz)$,
of $f$ are
defined, respectively, by setting, for all $x\in\rn$,
$$g(f)(x):=\lf[\int_0^\infty\lf|f\ast\phi_t(y)\r|^2\dt\r]^{1/2},$$
$$S(f)(x):=\lf[\int_0^\infty\int_{\{y\in\rn:\ |y-x|<t\}}
 |f\ast\phi_t(y)|^2\,\frac{dy\,dt}{t^{n+1}}\r]^{1/2}$$
and
$$g_\lz^*(f)(x):=\lf[\int_0^\infty\int_{\mathbb  R^n}\lf(\frac{t}{t+|x-y|}\r)^{\lambda
 n}\lf|f\ast\phi_t(y)\r|^2\,\frac{dy\,dt}{t^{n+1}}\r]^{1/2}.$$

Recall that $f\in\mathcal{S}'(\mathbb R^n)$ is called to \emph{vanish weakly at
infinity}, if for every $\phi\in\mathcal{S}(\mathbb R^n)$,
$f\ast\phi_t\to 0$ in $\mathcal{S}'(\mathbb R^n)$ as $t\to \infty$;
see, for example, \cite[p.\,50]{fs82}.
We have the following useful property of $\hv$, which is just \cite[Lemma 4.12]{hyy11}.

\begin{prop}\label{van}
Let $\vz$ be a growth function.
If $f\in H^\varphi(\mathbb R^n)$, then
$f$ vanishes weakly at infinity.
\end{prop}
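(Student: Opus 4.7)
The plan is to combine the pointwise domination of $f\ast\phi_t$ by the grand maximal function $f^*$ with a measure-theoretic consequence of $f^*\in L^\vz(\rn)$ to get pointwise decay, and then to upgrade to $\cs'(\rn)$-convergence by splitting the test integral. Given $\phi\in\cs(\rn)$, after rescaling so that $\phi\in\cs_{m(\vz)}(\rn)$ up to a multiplicative constant $C_\phi$, the definition of $f^*$ in \eqref{2.x3} immediately gives $|f\ast\phi_t(y)|\le C_\phi f^*(x)$ whenever $x\in B(y,t)$, whence
$$|f\ast\phi_t(y)|\le C_\phi\essinf_{x\in B(y,t)}f^*(x).$$

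For the key convergence of this essinf, fix $\lz>0$. Monotonicity of $\vz(x,\cdot)$ yields
$$\vz\lf(\lf\{x\in\rn:\ f^*(x)>\lz\r\},\lz\r)\le\int_\rn\vz(x,f^*(x))\,dx<\fz,$$
while $\vz(\cdot,\lz)\in\aa_\fz(\rn)$ is a nontrivial Muckenhoupt weight and hence satisfies $\vz(\rn,\lz)=\fz$ (a standard $A_\fz$ consequence: the reverse-Hölder bound $w(A_R)\gs w(B(0,2R))\ge w(B(0,1))$ on dyadic annuli $A_R=B(0,2R)\setminus B(0,R)$ forces $\sum_k w(A_{2^k})=\fz$). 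Consequently,
$$\vz\lf(B(y,t)\cap\lf\{f^*\le\lz\r\},\lz\r)=\vz(B(y,t),\lz)-\vz\lf(B(y,t)\cap\lf\{f^*>\lz\r\},\lz\r)\to\fz$$
as $t\to\fz$, so $|B(y,t)\cap\{f^*\le\lz\}|>0$ for $t$ large and thus $\essinf_{B(y,t)}f^*\le\lz$. Letting $\lz\downarrow0$ yields $\essinf_{B(y,t)}f^*\to0$ as $t\to\fz$, which combined with the displayed bound gives $f\ast\phi_t(y)\to0$ pointwise; the inclusion $B(y,t)\supset B(0,t-|y|)$ for $|y|<t$ upgrades this to uniform convergence on compact subsets of $\rn$.

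To pass to $\cs'(\rn)$, test against arbitrary $\eta\in\cs(\rn)$ and split
$$\lf|\int_\rn f\ast\phi_t(y)\,\eta(y)\,dy\r|\le\int_{|y|\le t/2}|f\ast\phi_t(y)||\eta(y)|\,dy+\int_{|y|>t/2}|f\ast\phi_t(y)||\eta(y)|\,dy.$$
The first integral is bounded by $C_\phi\,\essinf_{B(0,t/2)}f^*\cdot\|\eta\|_{L^1(\rn)}\to0$ by the previous paragraph. The second is controlled using $|f\ast\phi_t|\le C_\phi f^*$ together with the Schwartz decay $|\eta(y)|\ls_N(1+|y|)^{-N}$ (any $N$) and the uniformly lower-type inequality $(f^*)^p\vz(\cdot,1)\ls\vz(\cdot,f^*)\in L^1(\rn)$ on $\{f^*>1\}$ (with $p$ the lower type of $\vz$), plus the trivial bound $f^*|\eta|\le|\eta|\in L^1(\rn)$ on $\{f^*\le1\}$; this gives $f^*|\eta|\in L^1(\rn)$ and hence $\int_{|y|>t/2}f^*|\eta|\to 0$ by absolute continuity of the integral.

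The main obstacle is this last tail estimate: for lower-type exponent $p\in(0,1)$ the complementary Musielak-Orlicz function degenerates, so the usual Hölder duality in $L^\vz(\rn)$ is unavailable and one must argue integrability of $f^*\eta$ via the direct split above combining the uniformly lower-type inequality with the rapid Schwartz decay of $\eta$.
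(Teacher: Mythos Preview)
The paper itself does not prove this proposition, merely quoting it as \cite[Lemma 4.12]{hyy11}, so there is no in-paper argument to compare to. Your overall strategy---the pointwise domination $|f\ast\phi_t(y)|\le C_\phi\essinf_{B(y,t)}f^*$ together with $\vz(\{f^*>\lz\},\lz)<\infty$ and $\vz(\rn,\lz)=\infty$ forcing the essential infimum to zero---is the natural one, and the pointwise and locally-uniform convergence parts are correct.

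The gap is precisely where you flag it, in the tail estimate, but you have not actually closed it. You assert that $(f^*)^p\vz(\cdot,1)\in L^1(\{f^*>1\})$ combined with rapid decay of $\eta$ ``gives $f^*|\eta|\in L^1(\rn)$'', yet no link between the weight $\vz(\cdot,1)$ and the unrelated Schwartz function $\eta$ is supplied; for $p<1$ one even has $f^*\ge(f^*)^p$ on $\{f^*>1\}$, so the lower-type bound points the wrong way. The repair is to keep, for $t\ge1$, the sharper estimate $|f\ast\phi_t(y)|\le C_\phi\,G(y)$ with $G(y):=\essinf_{B(y,1)}f^*$, rather than weakening to $\le C_\phi f^*(y)$. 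Where $G(y)>1$ one has
\[
G(y)^p\,\vz(B(y,1),1)\le\int_{B(y,1)}[f^*(x)]^p\vz(x,1)\,dx\ls\int_{\rn}\vz(x,f^*(x))\,dx<\infty.
\]
Choosing $q\in(1,\infty)$ with $\vz\in\aa_q(\rn)$, the dual weight $\sigma:=\vz(\cdot,1)^{-1/(q-1)}\in A_{q'}(\rn)$ is doubling, so $\sigma(B(y,1))\le\sigma(B(0,|y|+1))\ls(1+|y|)^{D}$; H\"older's inequality $|B|\le(\int_Bw)^{1/q}(\int_B\sigma)^{1/q'}$ then yields $\vz(B(y,1),1)\gtrsim(1+|y|)^{-D(q-1)}$, whence $G(y)\ls(1+|y|)^{D(q-1)/p}$. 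This polynomial majorant times any $\eta\in\cs(\rn)$ lies in $L^1(\rn)$, and dominated convergence (using the pointwise limit you already established) finishes the $\cs'$-convergence directly, without the need to split the integral.
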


The following Proposition \ref{s-ch} is just \cite[Theorem 4.11]{hyy11}.

\begin{prop}\label{s-ch}
Let $\vz$ be a growth function.
Then $f\in\hv$ if and only if $f\in\cs'(\rn)$, $f$ vanishes weakly at infinity
and $S(f)\in\lv$ and, moreover,
$$\frac1C\|S(f)\|_\lv\le\|f\|_\hv\le C\|S(f)\|_\lv$$
with $C$ being a positive constant independent of $f$.
\end{prop}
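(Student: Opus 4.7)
The plan is to establish the two directions of the equivalence separately, exploiting the atomic characterization of $\hv$ from Proposition \ref{prop0} for one direction, and the Calder\'on reproducing formula together with a stopping-time/tent-space argument for the other. The norm equivalence will be extracted from quantitative estimates at each step.

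For the necessity direction, namely $f\in\hv$ implies $f$ vanishes weakly at infinity and $\|S(f)\|_\lv\ls\|f\|_\hv$, I would first note that weak vanishing at infinity is exactly the content of Proposition \ref{van}. For the area-function bound, I would invoke Proposition \ref{prop0} to obtain an atomic decomposition $f=\sum_j b_j$ in $\cs'(\rn)$, where each $b_j$ is a multiple of a $(\vz,q,s)$-atom supported in a cube $Q_j$ with $\sum_j\vz(Q_j,\|b_j\|_{\vq[_j]})\ls 1$ after normalizing by $\|f\|_\hv$. For a single atom $a$ supported in $Q$, the vanishing moments \eqref{4.y2} of $\phi$ together with its compact support yield, by standard size/cancellation estimates, that $S(a)$ decays rapidly away from $Q$ (powers of $\ell(Q)/|x-x_Q|$), while near $Q$ one controls $\|S(a)\|_{L^q(Q^\ast)}$ using Plancherel together with the boundedness of $M$ on appropriate weighted $L^q$ (Corollary \ref{HL-v}) and H\"older's inequality with the $\aa_q(\rn)$ weight. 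Summing the resulting pointwise estimates via the uniform $\sigma$-quasi-subadditivity of $\vz$ (Lemma \ref{lem1}(i)) and Lemma \ref{lem3} then yields $\|S(f)\|_\lv\ls\|f\|_\hv$.

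For the sufficiency direction, I would start from the Calder\'on reproducing formula
\begin{equation*}
f=\int_0^\infty (f\ast\phi_t)\ast\phi_t\,\dt
\end{equation*}
in $\cs'(\rn)/\cp(\rn)$, which becomes an honest identity in $\cs'(\rn)$ once one knows $f$ vanishes weakly at infinity. Setting $\boz_k:=\{x\in\rn:\,S(f)(x)>2^k\}$ and writing $\wz\boz_k$ for its tent, I would perform a Whitney decomposition $\boz_k=\bigcup_l\qkl$ and truncate the $t$-integral over the tent pieces
$T(\qkl):=\wz\boz_k\cap((\wz\boz_{k+1})^\complement)\cap(\qkl\times(0,\fz))$
to obtain a decomposition $f=\sum_{k,l}b_{k}^{l}$ in $\cs'(\rn)$, with each $b_k^l$ supported in a dilate of $\qkl$ and having vanishing moments up to order $m(\vz)$ inherited from $\phi$. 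The key quantitative estimate, which follows from a duality/tent-space identity together with the $L^q$-bound on $\phi_t$-reconstruction, is
\begin{equation*}
\|b_k^l\|_{L_\vz^q(\wz{Q}_k^l)}\ls 2^k\|\chi_{\wz{Q}_k^l}\|_\lv^{-1},
\end{equation*}
so each $b_k^l$ is (a multiple of) a $(\vz,q,m(\vz))$-atom, and the Whitney/tent structure gives
$\sum_{k,l}\vz(\wz{Q}_k^l,2^k)\ls\sum_k\vz(\boz_k,2^k)\ls\int_\rn\vz(x,S(f)(x))\,dx.$
Applying Lemma \ref{lem3}(ii) to pass from this summable inequality to the quasi-norm estimate, and then invoking Proposition \ref{prop0}, I would conclude $\|f\|_\hv\ls\|S(f)\|_\lv$.

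The main obstacle will be the sufficiency half, and within it two delicate points must be handled with care. First, the passage from the Calder\'on integral identity to the atomic sum $f=\sum_{k,l}b_k^l$ in $\cs'(\rn)$ requires justifying convergence, which is where the weak vanishing at infinity of $f$ is essential and where one uses Proposition \ref{van} in the converse direction only as a hypothesis. Second, the $L_\vz^q(\wz{Q}_k^l)$ estimate on each $b_k^l$ must be produced using a tent-space/duality argument tuned to the Musielak--Orlicz setting; the correct exponent $q>q(\vz)$ is chosen via Lemma \ref{l2.x1}(ii) so that $\vz(\cdot,t)\in\aa_{q'}(\rn)$ uniformly, which then makes the underlying maximal-function and $\aa_{q'}$-duality tools available. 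Once these two points are in place, the summation and the appeal to the atomic norm equivalence complete the proof.
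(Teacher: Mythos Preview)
The paper does not actually prove Proposition~\ref{s-ch}; it simply records it as \cite[Theorem 4.11]{hyy11} and uses it as a black box. Your proposed two-step strategy---atomic decomposition plus pointwise size/cancellation estimates on $S(a)$ for the necessity, and Calder\'on reproducing formula plus a tent-space/Whitney stopping-time construction producing atoms for the sufficiency---is exactly the standard route and is essentially what is carried out in \cite{hyy11} (and, in the weighted-$H^p$ precursor, in \cite{st89}). So there is no meaningful divergence to compare.

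One small slip worth flagging: in your final paragraph you write that $q>q(\vz)$ is chosen ``so that $\vz(\cdot,t)\in\aa_{q'}(\rn)$ uniformly.'' Lemma~\ref{l2.x1} gives $\vz\in\aa_q(\rn)$, not $\aa_{q'}(\rn)$; what one actually uses in the tent-space duality step is that the \emph{dual weight} $[\vz(\cdot,t)]^{-1/(q-1)}$ lies in $A_{q'}(\rn)$ (equivalently, the $\aa_q$ condition itself in its dual formulation), together with the $L^q_{\vz(\cdot,t)}$-boundedness of the area integral. With that correction the outline is sound.
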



Similar to the proof of \cite[Lemma 5.1]{ky}, we
easily obtain the following boundedness of the Littlewood-Paley $g$-function
from $\hv$ to $\lv$. We omit the details.

\begin{prop}\label{g-bd}
Let $\vz$ be a growth function.
If $f\in H^\varphi(\mathbb R^n)$, then $g (f)\in
L^\varphi(\rn)$ and, moreover, there exists a positive constant $C$
such that for all $f\in H^\varphi(\rn)$,
$$\|g (f)\|_{L^\varphi(\rn)}\le C \|f\|_{H^\varphi(\rn)}.$$
\end{prop}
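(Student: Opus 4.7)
The plan is to reduce to the atomic characterization of $\hv$ established in Proposition \ref{prop0}, and then to estimate $g$ on each atom separately by splitting into local and nonlocal parts in the standard Hardy space manner.

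First, I would fix an admissible triplet $(\vz,q,s)$ with $q\in(q(\vz),\fz)$ chosen large enough so that $\vz\in\aa_q(\rn)$ and $s\ge m(\vz)$, and use Proposition \ref{prop0} to write $f=\sum_j b_j$ in $\cs'(\rn)$, where each $b_j$ is a multiple of a $(\vz,q,s)$-atom supported in a cube $Q_j$ and $\Lambda_q(\{b_j\}_j)\ls\|f\|_\hv$. By Minkowski's integral inequality, $g$ is sublinear, so $g(f)\le\sum_j g(b_j)$ pointwise. Invoking the uniform $\sigma$-quasi-subadditivity of $\vz$ from Lemma \ref{lem1}(i), I would obtain, for any $\lz\in(0,\fz)$,
$$\int_\rn \vz\!\lf(x,\frac{g(f)(x)}{\lz}\r)dx\ls \sum_j \int_\rn \vz\!\lf(x,\frac{g(b_j)(x)}{\lz}\r)dx,$$
reducing the problem to an atom-by-atom modular estimate.

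For each $j$, with $x_j$ the center of $Q_j$, I would split the inner integral as $\int_{2\sqrt n\,Q_j}+\int_{(2\sqrt n\,Q_j)^\com}$. On the near part $2\sqrt n\,Q_j$, the $L^q(\rn)$ boundedness of $g$ (standard via Plancherel and vector-valued Calder\'on-Zygmund theory), combined with H\"older's inequality exploiting $\vz(\cdot,t)\in\aa_q(\rn)$, would give
$$\int_{2\sqrt n\,Q_j}\vz(x,g(b_j)(x))\,dx\ls \vz\!\lf(Q_j,\|b_j\|_\vq\r).$$
On the far part, I would use the vanishing moments of the atom (up to order $s$) together with Taylor's theorem applied to $\phi_t(y-\cdot)$ around $x_j$ and the support/smoothness of $\phi$, obtaining a decay factor $(\ell(Q_j)/|x-x_j|)^{n+s+1}$ in $g(b_j)(x)$ for $x\not\in 2\sqrt n\,Q_j$. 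Summing over the dyadic annuli $2^{k+1}\sqrt n\,Q_j\setminus 2^k\sqrt n\,Q_j$, $k\ge1$, and using the uniformly lower type $p$ of $\vz$ together with the choice $s\ge m(\vz)=\lfz n[q(\vz)/i(\vz)-1]\rfz$, the resulting geometric series in $k$ converges and also produces a bound of the form $C\,\vz(Q_j,\|b_j\|_\vq)$.

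Combining the two parts and summing in $j$, I would arrive at
$$\sum_j \int_\rn \vz\!\lf(x,\frac{g(b_j)(x)}{\lz}\r)dx\ls \sum_j \vz\!\lf(Q_j,\frac{\|b_j\|_\vq}{\lz}\r),$$
and finally applying Lemma \ref{lem3}(ii) would convert this modular estimate into the desired Luxemburg norm inequality $\|g(f)\|_\lv\ls \|f\|_\hv$.

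The main obstacle I expect is the far part: one must choose $q$ close enough to $q(\vz)$ and $s$ large enough (driven by $m(\vz)$) so that the weighted H\"older step is valid and the annular series converges uniformly in the auxiliary scaling parameter $\lz$. The interplay between the uniformly lower type exponent $i(\vz)$ and the critical weight index $q(\vz)$ through \eqref{2.1}, \eqref{2.3} and \eqref{2.y1} is precisely what makes this balance possible, and verifying that all estimates are uniform in $t\in(0,\fz)$ appearing in $\vz(\cdot,t)$ is the delicate bookkeeping one has to carry out carefully.
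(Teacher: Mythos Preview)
Your proposal is correct and follows precisely the atomic-decomposition strategy that the paper implicitly invokes by citing \cite[Lemma 5.1]{ky} (whose proof is exactly the near/far splitting for each atom combined with the uniformly lower type and $\aa_q$ properties that you describe). The only technical point worth flagging is the justification of $g(f)\le\sum_j g(b_j)$ when $f=\sum_j b_j$ holds a priori only in $\cs'(\rn)$; in Ky's argument this is handled either by the specific Calder\'on--Zygmund construction of the atomic decomposition or by a finite-atom density step, and you should make this passage explicit when writing out the details.
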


We have the following Littlewood-Paley $g$-function characterization of $\hv$.

\begin{thm}\label{g-ch}
Let $\vz$ be as in Definition \ref{d2.2}.
Then $f\in\hv$ if and only if $f\in\cs'(\rn)$, $f$ vanishes weakly at infinity and $g(f)\in\lv$
and, moreover,
$$\frac1C\|g(f)\|_\lv\le\|f\|_\hv\le C\|g(f)\|_\lv$$
with $C$ being a positive constant independent of $f$.
\end{thm}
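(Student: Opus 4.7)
The plan is to establish the equivalence in two directions, with the forward direction being essentially immediate from results already stated. If $f\in\hv$, then Proposition \ref{van} gives that $f$ vanishes weakly at infinity, while Proposition \ref{g-bd} yields $\|g(f)\|_\lv\ls\|f\|_\hv$. The substance of the theorem therefore lies in the converse: assuming $f\in\cs'(\rn)$ vanishes weakly at infinity and $g(f)\in\lv$, to show $f\in\hv$ with $\|f\|_\hv\ls\|g(f)\|_\lv$. By virtue of Proposition \ref{s-ch} (the Lusin area characterization), this reduces to establishing the single inequality $\|S(f)\|_\lv\ls\|g(f)\|_\lv$.

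To prove this, I would invoke the discrete Calder\'on reproducing formula of Lu and Zhu (\cite[Theorem 2.1]{lz11}): there exist Schwartz functions $\wz\phi$ (with smoothness and moment properties compatible with $\phi$) and sampling points $\{y_Q\}$, indexed by dyadic cubes $Q\in\cd_k$ with side length $2^{-k}$, such that for every $f\in\cs'(\rn)$ that vanishes weakly at infinity,
$$f=\sum_{k\in\zz}\sum_{Q\in\cd_k}|Q|\,(f*\phi_k)(y_Q)\,\wz\phi_k(\cdot-y_Q)\quad\text{in }\cs'(\rn).$$
For $(y,t)\in\{(y,t):|y-x|<t\}$, I substitute this expansion into $f*\phi_t(y)$ and exploit the cancellation and smoothness of $\phi$ and $\wz\phi$. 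The key estimate from \cite[Lemma 2.1]{lz11} (displayed below as \eqref{4.y1}) gives pointwise decay of $\phi_t*\wz\phi_k(y-y_Q)$ in both the scale separation $|k+\log_2 t|$ and the spatial separation $|y-y_Q|$, which upon integration over the cone $\{(y,t):|y-x|<t\}$ against $\frac{dy\,dt}{t^{n+1}}$ yields, for a suitable small $r\in(0,1)$,
$$[S(f)(x)]^2\ls\sum_{k\in\zz}\lf\{M\lf(\sum_{Q\in\cd_k}|(f*\phi_k)(y_Q)|^r\chi_Q\r)(x)\r\}^{2/r},$$
where the passage from the continuous integral to the discretized maximal function uses the companion estimate \eqref{4.16} from \cite[Lemma 2.2]{lz11}.

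With this pointwise bound in hand, I would select $r\in(0,i(\vz)/q(\vz))$, so that the rescaled growth function $\wz\vz(x,s):=\vz(x,s^{1/r})$ has uniform lower type strictly greater than $q(\vz)$ and thus satisfies the hypotheses of Theorem \ref{f-s}. Applying that Musielak-Orlicz Fefferman-Stein vector-valued inequality to the sequence
$$\lf\{\sum_{Q\in\cd_k}|(f*\phi_k)(y_Q)|^r\chi_Q\r\}_{k\in\zz}\in L^{\wz\vz}(\ell^{2/r},\rn),$$
then undoing the $r$-th power and passing from the discrete samples back to the continuous $g$-function via a standard comparison (noting that $\sum_{Q\in\cd_k}|(f*\phi_k)(y_Q)|^2\chi_Q$ is controlled by integrals of $|f*\phi_t|^2\,dt/t$ over a dyadic strip around $t=2^{-k}$), I would obtain $\|S(f)\|_\lv\ls\|g(f)\|_\lv$, completing the argument.

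The main obstacle I anticipate is the careful pointwise domination step: converting the Lu-Zhu reproducing identity into an estimate whose right-hand side has exactly the $\ell^{2/r}$-valued maximal-function form required by Theorem \ref{f-s}, with $r$ chosen strictly below the critical ratio $i(\vz)/q(\vz)$. A second subtlety is that the hypothesis that $f$ vanishes weakly at infinity is crucial both for the reduction via Proposition \ref{s-ch} and for the very applicability of the Lu-Zhu reproducing formula, so this hypothesis must be carried throughout. Once these technical points are dispatched, the Musielak-Orlicz Fefferman-Stein inequality of Theorem \ref{f-s}, which was expressly developed in Section \ref{s2} for this purpose, does the remaining heavy lifting.
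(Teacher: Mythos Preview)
Your proposal is correct and follows essentially the same route as the paper: reduce via Propositions \ref{van}, \ref{s-ch}, \ref{g-bd} to the single inequality $\|S(f)\|_\lv\ls\|g(f)\|_\lv$, invoke the Lu--Zhu pointwise estimates to dominate $S(f)$ by an $\ell^{2/r}$-valued maximal function of discrete samples of $\phi_j\ast f$, and apply Theorem \ref{f-s} to the rescaled growth function $\wz\vz(x,t)=\vz(x,t^{1/r})$. The one technical refinement the paper makes explicit is that $r$ is constrained from below as well as above (the Lu--Zhu lemmas require $r>n/(n+L)$ with $L<m(\vz)+1$), so the working window is $r\in(p_0/q_0,\,p_0)$ for suitably chosen $p_0<i(\vz)$ and $q_0>q(\vz)$ rather than all of $(0,i(\vz)/q(\vz))$; the return from discrete samples to $g(f)$ is then handled simply by the arbitrariness of the sample points $x_{\wz I}\in\wz I$.
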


\begin{proof}
By Propositions \ref{van} through \ref{g-bd},
it suffices to prove that if $f\in\cs'(\rn)$ and
$g(f)\in\lv$, then
$\|S(f)\|_{L^\varphi(\rn)}\ls\|g (f)\|_{L^\varphi(\rn)}.$

For $j\in\zz$, let
$$\cd_j:=\{I\st\rn:\ I {\rm\ is\ a\ dyadic\ cube\ and\ }\ell(I)=2^{-j} \}.$$
From \cite[Theorem 2.1, Lemmas 2.1 and 2.2]{lz11},
we deduce that for any fixed $L\in[0, m(\vz)+1)$,
$K\in\nn$ and $r\in(\frac n{n+K},1]$,
there exists $N\in\nn$ large enough
such that, for all $j\in\zz$,
$f\in\cs'(\rn)$, $u, u^*\in I$
and $x_{\wz I}\in \wz I$,
\begin{eqnarray}\label{4.y1}
|\phi_j*f(u)|&&\!\ls\!\sum_{\wz j\in\zz}\sum_{\wz I\in\cd_{\wz j+N}}
\frac{2^{-|j-\wz j|L}2^{-(\min\{j,\wz j\})K}|\wz I|}{(2^{-\min\{j,\wz j\}}+|u-x_I|)^{n+K}}
|(\phi_{\wz j}*f)(x_{\wz I})|\\
&&\!\ls\!\sum_{\wz j\in\zz}2^{-|j-\wz j|L
+n(1-\frac1r)[\min(\wz j,j)-\wz j]}\noz\\
&&\hs\times\lf[M\lf(\!\lf[\sum_{\wz I\in\cd_{\wz j+N}}
|\phi_{\wz j}*f(x_{\wz I})|\chi_{\wz I}\r]^r\!\r)(u^*)\r]^{1/r}.\noz
\end{eqnarray}
We point out that $L$ in \cite[Lemma 2.1]{lz11} must
be strictly less than $M+1$, which can not be arbitrary,
as claimed in \cite[Lemma 2.1]{lz11}. This is why we need
to restrict $L\in[0, m(\vz)+1)$.

Let $i(\vz)$,
$q(\vz)$ and $m(\vz)$ be, respectively, as in
\eqref{2.1}, \eqref{2.3} and \eqref{2.y1}. By
$m(\vz)+1=\lfz n[q(\vz)/i(\vz)-1]\rfz+1$ and the definitions of $q(\vz)$ and $i(\vz)$,
we know that there exist $q_0\in(q(\vz),\fz)$ and $p_0\in(0,i(\vz))$ such that
$\vz\in\aa_{q_0}(\rn)$, $\vz$ is uniformly lower type $p_0$ and
$L:= n(q_0/p_0-1)<m(\vz)+1$.
Then $\frac n{n+L}=\frac {p_0}{q_0}<p_0$.
Choosing $r\in(\frac n{n+L},p_0)$, by \eqref{4.y1},
we further conclude that, for all $x\in\rn$,
\begin{eqnarray}\label{4.16}
S(f)(x)&&\sim\lf[\sum_{j\in\zz}2^{jn}\int_{B(x,2^{-j})}|\phi_j*f(y)|^2\,dy\r]^{1/2}\\
&&\ls\lf(\sum_{j\in\zz}2^{jn}
\int_{B(x,2^{-j})}
\lf\{\sum_{\wz j\in\zz}2^{-|j-\wz j|L
+n(1-\frac1r)[\max(\wz j,j)-j]}\r.\r.\noz\\
&&\hs\times\lf.\lf.\lf[M\lf(\lf[\sum_{\wz I\in\cd_{\wz j+N}}|\phi_{\wz j}*f(x_{\wz I})|
\chi_{\wz I}\r]^r\r)(x)\r]^{1/r}\r\}^2\,dy\r)^{1/2}\noz\\
&&\sim\lf\{\sum_{\wz j\in\zz}
\lf[M\lf(\lf[\sum_{\wz I\in\cd_{\wz j+N}}|\phi_{\wz j}*f(x_{\wz I})|
\chi_{\wz I}\r]^r\r)(x)\r]^{2/r}\r\}^{1/2}.\noz
\end{eqnarray}

Choose $K$ large enough such that $\frac n{n+K}<p_0$ and $r\in(\max
\{\frac n{n+L},\frac n{n+K}\},p_0)$.
Let $\wz\vz(x,t):=\vz(x,t^{1/r})$ for all $x\in\rn$ and $t\in [0,\fz)$,
and $p_1\in[1,\fz)$. From the fact that $\vz$ is of uniformly upper type $p_1$ and lower
type $p_0$, it follows that $\wz\vz$ is of uniformly
upper type $p_1/r$ and lower type $p_0/r$. Then, by Theorem \ref{f-s},
together with $p_1/r>p_0/r>1$,
we conclude that
\begin{eqnarray*}
\int_\rn\vz\lf(x,\lf\{\sum_{j\in\zz}\lf[M(f_j^r)(x)\r]^{2/r}\r\}^{1/2}\r)\,dx
\ls\int_\rn\vz\lf(x,\lf[\sum_{j\in\zz}|f_j(x)|^2\r]^{1/2}\r)\,dx,
\end{eqnarray*}
which, together with \eqref{4.16}, further implies that
\begin{eqnarray*}
&&\int_\rn\vz(x,S(f)(x))\,dx\\
&&\hs\ls\int_\rn\vz\lf(x,\lf\{\sum_{\wz j\in\zz}\lf[M\lf(\lf[\sum_{\wz I\in\cd_{\wz j+N}}
|\phi_{\wz j}*f(x_{\wz I})|
\chi_{\wz I}\r]^r\r)(x)\r]^{2/r}\r\}^{1/2}\r)\,dx\\
&&\hs\ls\int_\rn\vz\lf(x,\lf\{\sum_{\wz j\in\zz}\sum_{\wz I\in\cd_{\wz j+N}}
\lf[|\phi_{\wz j}*f(x_{\wz I})|\chi_{\wz I}(x)\r]^2\r\}^{1/2}\r)\,dx
\sim\int_\rn\vz\lf(x,g (f)(x)\r)\,dx,
\end{eqnarray*}
where, in the last step, we used the arbitrariness of $x_{\wz I}\in \wz I$.
This finishes the proof of Theorem \ref{g-ch}.
\end{proof}

It is easy to see that $S (f)(x)\le g_\lz^*(f)(x)$ for all $x\in\rn$,
which, together with Proposition \ref{s-ch}, immediately implies the
following conclusion.

\begin{prop}\label{prop3}
Let $\vz$ be as in Definition \ref{d2.2} and $\lz\in (1,\fz)$.
If $f\in\mathcal{S}'(\mathbb R^n)$ vanishes weakly at infinity and
$g_\lz^*(f)\in L^\varphi(\rn)$, then $f\in
H^\varphi(\rn)$ and, moreover,
$$\|f\|_{\hv}\le C\|g_\lz^*(f)\|_{L^\varphi(\rn)}$$
with $C$ being a positive constant independent of $f$.
\end{prop}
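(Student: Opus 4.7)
The plan is to execute exactly the reduction hinted at in the text: prove the pointwise domination $S(f)(x)\ls g_\lz^*(f)(x)$, pass to $L^\vz(\rn)$-norms, and then invoke the Lusin area function characterization of $\hv$ from Proposition \ref{s-ch}.

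First, I would establish the pointwise estimate. The only region of integration in $S(f)(x)$ is the cone $\{(y,t):\ |y-x|<t\}$, and on this set one has $t+|x-y|\le 2t$, so $\frac{t}{t+|x-y|}\ge \frac 12$. Therefore
\begin{eqnarray*}
[S(f)(x)]^2 &=& \int_0^\fz\int_{\{|y-x|<t\}}|f\ast\phi_t(y)|^2\,\frac{dy\,dt}{t^{n+1}}\\
&\le& 2^{\lz n}\int_0^\fz\int_{\{|y-x|<t\}}\lf(\frac{t}{t+|x-y|}\r)^{\lz n}|f\ast\phi_t(y)|^2\,\frac{dy\,dt}{t^{n+1}}\\
&\le& 2^{\lz n}[g_\lz^*(f)(x)]^2,
\end{eqnarray*}
which yields $S(f)(x)\le 2^{\lz n/2}g_\lz^*(f)(x)$ for every $x\in\rn$.

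Next, monotonicity of the Luxembourg norm on $L^\vz(\rn)$ (applied after dividing both sides by $2^{\lz n/2}\|g_\lz^*(f)\|_\lv$ and using the uniformly upper type $1$ scaling of $\vz$) gives $\|S(f)\|_\lv\ls \|g_\lz^*(f)\|_\lv$. In particular, the hypothesis $g_\lz^*(f)\in\lv$ forces $S(f)\in\lv$.

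Finally, since by assumption $f\in\cs'(\rn)$ vanishes weakly at infinity and $S(f)\in\lv$, Proposition \ref{s-ch} yields $f\in\hv$ together with $\|f\|_\hv\ls\|S(f)\|_\lv$. Chaining this with the previous inequality gives
$$\|f\|_\hv\ls\|S(f)\|_\lv\ls\|g_\lz^*(f)\|_\lv,$$
which is the desired conclusion. There is essentially no obstacle here: the proof is purely a pointwise cone-to-whole-space comparison followed by invocation of Proposition \ref{s-ch}; the only mild subtlety is tracking the constant $2^{\lz n/2}$ through the Musielak-Orlicz norm, which is handled by the positive uniform upper type property of $\vz$.
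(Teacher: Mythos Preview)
Your proof is correct and follows exactly the route the paper indicates: the pointwise comparison $S(f)(x)\ls g_\lz^*(f)(x)$ on the cone (the paper states this without the explicit constant $2^{\lz n/2}$, but your version is the precise one), followed by an appeal to Proposition~\ref{s-ch}. There is nothing to add.
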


Next we consider the boundedness of $g_\lz^*$ on $\lv$. To this end,
we need to introduce the following variant of the Lusin area function
$S$. For all $\az\in(0,\fz)$, $f\in \cs'(\rn)$ and $x\in\rn$, let
$$ S_{\alpha}(f)(x):=\lf[\int_0^\infty\int_{\{y\in\rn:\ |y-x|<\alpha t\}}
\lf|f\ast\phi_t(y)\r|^2(\az t)^{-n}\dyt\r]^{1/2}.$$

The following technical lemma plays a key role to obtain
the $g_\lz^\ast$-function characterization of $\hv$, whose
proof was motivated by Folland and Stein \cite[p.\,218, Theorem (7.1)]{fs82}
and Aguilera and Segovia \cite[Theorem 1]{as77}.

\begin{lem}\label{gx.sbd}
Let $q\in[1,\fz)$, $\varphi$ be as in Definition \ref{d2.2} and $\vz\in\aa_q(\rn)$.
Then there exists a positive constant $C$ such that,
for all $\az\in[1,\fz)$, $t\in[0,\fz)$ and measurable functions $f$,
$$\int_{\mathbb R^n}\varphi(x,S_\az(f)(x))\,dx\le
C\az^{n(q-p/2)}\int_{\mathbb R^n}\varphi(x,S(f)(x))\,dx.$$
\end{lem}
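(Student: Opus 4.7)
The plan is to adapt the classical Aguilera--Segovia argument \cite{as77} for weighted area functions to the Musielak--Orlicz setting by establishing two endpoint distributional estimates (uniformly in the weight parameter $t$) and then interpolating using the uniform lower type $p$ of $\vz$. The two endpoints will carry losses $\az^{nq}$ (weak type) and $\az^{n(q-1)}$ (strong $L^2$), and the convex combination $(1-p/2)\cdot nq+(p/2)\cdot n(q-1)=n(q-p/2)$ will yield the claimed factor $\az^{n(q-p/2)}$.

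First, I would establish the strong $L^2$-type bound
$$\int_\rn[S_\az(f)(x)]^2\vz(x,t)\,dx\le C\az^{n(q-1)}\int_\rn[S(f)(x)]^2\vz(x,t)\,dx,$$
valid for every $t\in[0,\fz)$. This is a direct Fubini computation: both sides equal, up to constants, an integral of the form $\int_0^\fz\int_\rn|f\ast\phi_s(y)|^2(\rho s)^{-n}\vz(B(y,\rho s),t)\,dy\,\frac{ds}{s}$ with $\rho=\az$ on the left and $\rho=1$ on the right; the $A_q$-doubling estimate $\vz(B(y,\az s),t)\le C\az^{nq}\vz(B(y,s),t)$ combined with the normalization $(\az s)^{-n}$ versus $s^{-n}$ produces the factor $\az^{nq}\cdot\az^{-n}=\az^{n(q-1)}$. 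Second, I would establish the weak-type comparison
$$\vz(\{x\in\rn:\ S_\az(f)(x)>\lz\},t)\le C\az^{nq}\vz(\{x\in\rn:\ S(f)(x)>c\lz\},t)$$
for every $\lz,t\in(0,\fz)$ and some $c\in(0,1)$ independent of $\az$. Its key geometric ingredient is the pointwise inclusion
$$\{S_\az f>\lz\}\st\{M(\chi_{\{S f>c\lz\}})>c'\az^{-n}\},$$
proved by a covering/averaging argument (averaging $|f\ast\phi_s(y)|^2$ over $z\in B(y,s)$ forces at least a fraction $\gtrsim\az^{-n}$ of a suitable ball centered at $x$ to meet $\{Sf>c\lz\}$), together with the weak $(q,q)$-boundedness of $M$ with respect to the $A_q$-weight $\vz(\cdot,t)$, uniformly in $t$ via $\vz\in\aa_q(\rn)$.

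Finally, I would combine the two endpoint estimates via a Marcinkiewicz-type interpolation tailored to Musielak--Orlicz norms, in the spirit of Theorem \ref{inter}. Using Lemma \ref{lem1}(ii) to rewrite
$$\int_\rn\vz(x,S_\az f(x))\,dx\sim\int_0^\fz\vz(\{S_\az f>s\},s)\,\frac{ds}{s},$$
one splits this integral at a level $s_0$ optimally balancing the Chebyshev consequence of the $L^2$-bound against the weak-type bound; the uniform lower type $p$ of $\vz$ then produces the exponent $\az^{n(q-p/2)}$ via the convex combination stated above. The hardest step will be the weak-type estimate: the naive Fubini relating $S_\az$ and $S$ introduces a kernel behaving like $|z-x|^{-n}$ near $z=x$, which is not locally integrable, so obtaining the geometric pointwise inclusion requires a more delicate stopping-time or Whitney-type covering argument in the style of \cite{fs82,as77}. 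A further subtlety is that the Marcinkiewicz-type interpolation must be performed uniformly in the weight parameter $t$, which the integral representation of $\vz$ in Lemma \ref{lem1}(ii) is precisely designed to facilitate.
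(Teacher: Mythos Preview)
Your two endpoint estimates are the right objects, but the way you propose to combine them does not give the lemma. The global $L^2$ bound $\int[S_\alpha f]^2\vz(\cdot,t)\ls\alpha^{n(q-1)}\int[Sf]^2\vz(\cdot,t)$ is correct (by Fubini and $A_q$-doubling), but if you feed it through Chebyshev and then integrate against $\lambda^{-1}\,d\lambda$ you obtain $\alpha^{n(q-1)}\int_0^\infty\lambda^{-3}\int[Sf]^2\vz(\cdot,\lambda)\,d\lambda$; expanding $\int[Sf]^2\vz(\cdot,\lambda)$ by the layer cake and using the lower type $p$ and upper type $1$ of $\vz$, one term carries $\int_0^{s_0}\lambda^{p-3}\,d\lambda$, which diverges at $0$ since $p\le1<2$. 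No choice of split point $s_0$ rescues this, because the divergence comes from the small-$\lambda$ regime where you intended the $L^2$ estimate to be \emph{better} than the weak-type one. So a straight Marcinkiewicz-style interpolation between your (A) and (B) cannot produce a finite bound, let alone the sharp exponent.

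What the paper does (following \cite{as77}) is a good-$\lambda$ argument rather than interpolation. The crucial input is not the global $L^2$ bound but its \emph{localized} version, \cite[Lemma~2]{as77}: for any set $A$ and $U:=\{M\chi_A>(4\alpha)^{-n}\}$, one has $\int_{U^\complement}[S_\alpha f]^2\vz(\cdot,\lambda)\ls\alpha^{n(q-1)}\int_{A^\complement}[Sf]^2\vz(\cdot,\lambda)$. Applying this with $A=A_\lambda:=\{Sf>\lambda\alpha^{n/2}\}$ (note the $\alpha^{n/2}$ scaling, not a fixed $c$) gives, on $A_\lambda^\complement$, the truncation $Sf\le\lambda\alpha^{n/2}$, which is exactly what kills the divergent tail: the resulting inner integral becomes $\int_0^{\lambda\alpha^{n/2}}t\,\vz(\{Sf>t\},\lambda)\,dt$ rather than $\int_0^\infty$. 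Your global $L^2$ bound is just the case $A=\emptyset$ of this lemma and loses precisely this truncation. Likewise, the pointwise inclusion $\{S_\alpha f>\lambda\}\subset\{M\chi_{\{Sf>c\lambda\}}>c'\alpha^{-n}\}$ with $c$ independent of $\alpha$ is not what is used (and your averaging sketch cannot give it, as you yourself note, because of the $|z-x|^{-n}$ kernel); the paper never claims such an inclusion, instead bounding $\vz(\{S_\alpha f>\lambda\},\lambda)$ by $\vz(U,\lambda)+\vz(U^\complement\cap\{S_\alpha f>\lambda\},\lambda)$ and handling the two pieces separately. The balance of exponents $\alpha^{nq}$ and $\alpha^{n(q-1)}$ against the $\alpha^{n/2}$ in the threshold of $A_\lambda$, together with lower type $p$ and upper type $1$, is what produces $\alpha^{n(q-p/2)}$.
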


\begin{proof}
For all $\lz\in(0,\fz)$, let $A_\lz:=\{x\in\rn:\ S(f)(x)>\lz\az^{n/2}\}$  and
$$U:=\{x\in\rn:\ M(\chi_{A_\lz})(x)>(4\az)^{-n}\},$$
where $M$ is the Hardy-Littlewood maximal function.
Since $\vz\in\aa_q(\rn)$, we see that
\begin{eqnarray}\label{az1}
\vz(U,\lz)&&=\vz\lf(\{x\in\rn:\ M(\chi_{A_\lz})(x)>(4\az)^{-n}\},\lz\r)\\
&&\ls (4\az)^{nq}\|\chi_{A_\lz}\|_{L_{\vz(\cdot,\lz)}^q(\rn)}^q
\sim \az^{nq}\vz(A_\lz,\lz)\noz
\end{eqnarray}
and, by \cite[Lemma 2]{as77}, we know that
\begin{eqnarray}\label{az2}
\az^{n(1-q)}\int_{U^\com}[S_\az(f)(x)]^2\vz(x,\lz)\,dx
\ls \int_{A_\lz^\com}[S(f)(x)]^2\vz(x,\lz)\,dx.
\end{eqnarray}
Thus, from \eqref{az1} and \eqref{az2}, it follows that
\begin{eqnarray*}
&&\vz\lf(\lf\{x\in\rn:\ S_\az(f)(x)>\lz\r\},\lz\r)\\
&&\hs\le \vz(U,\lz)+\vz\lf(U^\com\cap\{x\in\rn:\ S_\az(f)(x)>\lz\},\lz\r)\\
&&\hs\ls \az^{nq}\vz(A_\lz,\lz)+\lz^{-2}\int_{U^\com}[S_\az(f)(x)]^2\vz(x,\lz)\,dx\\
&&\hs\ls \az^{nq}\vz(A_\lz,\lz)+\az^{n(q-1)}\lz^{-2}\int_{A_\lz^\com}[S(f)(x)]^2\vz(x,\lz)\,dx\\
&&\hs\sim \az^{nq}\vz(A_\lz,\lz)+
\az^{n(q-1)}\lz^{-2}\int_0^{\lz\az^{n/2}}t \vz(\lf\{x\in\rn:\ S(f)(x)>t\r\},\lz)\,dt,
\end{eqnarray*}
which, together with the assumption that $\az\in[1,\fz)$, Lemma \ref{lem1}(ii),
the uniformly lower type $p$ and upper type $1$ properties of $\vz$,
further implies that
\begin{eqnarray*}
&&\int_{\mathbb R^n}\varphi(x,S_\az(f)(x))\,dx\\
&&\hs=\int_0^\fz\frac1\lz\vz\lf(\lf\{x\in\rn:\ S_\az(f)(x)>\lz\r\},\lz\r)\,d\lz\\
&&\hs\ls \az^{nq}\int_0^\fz\frac1\lz\vz(A_\lz,\lz)\,d\lz
+
\az^{n(q-1)}\int_0^\fz\lz^{-3}\int_0^{\lz\az^{n/2}}t \vz(\lf\{x\in\rn:\ S(f)(x)>t\r\},\lz)\,dt\,d\lz\\
&&\hs\ls \az^{n(q-p/2)}\int_0^\fz\frac1\lz\vz(\{x\in\rn:\ S(f)(x)>\lz\},\lz)\,d\lz\\
&&\hs\hs+
\az^{n(q-1)}\lf\{\int_0^\fz\lz^{-3}\int_0^{\lz}\lz \vz(\lf\{x\in\rn:\ S(f)(x)>t\r\},t)\,dt\,d\lz\r.\\
&&\hs\hs+\lf.
\int_0^\fz\lz^{-3}\int_\lz^{\lz\az^{n/2}}(\lz/t)^pt \vz(\lf\{x\in\rn:\ S(f)(x)>t\r\},t)\,dt\,d\lz\r\}\\
&&\hs\ls
\az^{n(q-p/2)}\int_\rn\vz(x,S(f)(x))\,dx\\
&&\hs\hs+
\az^{n(q-1)}\lf\{
\int_0^\fz\frac1t\lz \vz(\lf\{x\in\rn:\ S(f)(x)>t\r\},t)\,dt\r.\\
&&\hs\hs+\lf.
\int_0^\fz\frac1t\lf[\az^{(2-p)n/2}-1\r] \vz(\lf\{x\in\rn:\ S(f)(x)>t\r\},t)\,dt\r\}\\
&&\hs\ls\az^{n(q-p/2)}\int_{\mathbb R^n}\varphi(x,S(f)(x))\,dx.
\end{eqnarray*}
This finishes the proof of Lemma \ref{gx.sbd}.
\end{proof}

Using Lemma \ref{gx.sbd}, we obtain the following boundedness
of $g_\lambda^*$ from $\hv$ to $\lv$.

\begin{prop}\label{gast-bd}
Let $\vz$ be as in Definition \ref{d2.2}, $q\in [1,\fz)$, $\vz\in\aa_q(\rn)$,
and $\lambda\in({2q}/{p},\fz)$. Then, there exists a positive constant
$C_{(\varphi,q)}$ such that,
for all $f\in\hv$,
$$\|g_\lambda^*(f)\|_{L^\varphi(\rn)}\le C_{(\varphi,q)}\|f\|_\hv.$$
\end{prop}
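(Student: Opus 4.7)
The plan is to dominate $g_\lambda^\ast(f)$ by a weighted sum of the modified area functions $S_{2^k}(f)$, and then use Lemma \ref{gx.sbd} together with Proposition \ref{s-ch} to transfer the estimate to $\|f\|_{H^\vz(\rn)}$. The condition $\lambda>2q/p$ appears at the very end as the condition for geometric-series convergence.

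\textbf{Step 1 (Annular decomposition).} For fixed $x\in\rn$ and $t\in(0,\fz)$, split the $y$-integration in the definition of $g_\lambda^\ast(f)(x)^2$ into the cone $\{|y-x|<t\}$ and the dyadic annuli $\{2^{k-1}t\le|y-x|<2^kt\}$ for $k\ge 1$. On the $k$-th annulus, $(t/(t+|x-y|))^{\lambda n}\le 2^{-(k-1)\lambda n}$, and the annulus sits inside the wider cone $\{|y-x|<2^kt\}$. A direct calculation, together with the definition of $S_\alpha(f)$, yields
\begin{equation*}
g_\lambda^\ast(f)(x)^2\ls S(f)(x)^2+\sum_{k=1}^{\fz}2^{kn}2^{-(k-1)\lambda n}S_{2^k}(f)(x)^2
\ls\sum_{k=0}^{\fz}2^{-kn(\lambda-1)}S_{2^k}(f)(x)^2,
\end{equation*}
so that, after taking square roots,
\begin{equation*}
g_\lambda^\ast(f)(x)\ls\sum_{k=0}^{\fz}2^{-kn(\lambda-1)/2}S_{2^k}(f)(x).
\end{equation*}

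\textbf{Step 2 (Integrating against $\vz$).} Applying the uniformly $\sigma$-quasi-subadditivity of $\vz$ from Lemma \ref{lem1}(i), and then the uniformly lower type $p$ property of $\vz$ (legal since $\lambda>2q/p\ge 2$, so $2^{-kn(\lambda-1)/2}\le 1$ for every $k\ge 0$), we obtain
\begin{equation*}
\int_\rn\vz(x,g_\lambda^\ast(f)(x))\,dx\ls\sum_{k=0}^{\fz}2^{-kn(\lambda-1)p/2}\int_\rn\vz(x,S_{2^k}(f)(x))\,dx.
\end{equation*}
Then I invoke Lemma \ref{gx.sbd} (with $\alpha=2^k$) to bound $\int_\rn\vz(x,S_{2^k}(f)(x))\,dx$ by $2^{kn(q-p/2)}\int_\rn\vz(x,S(f)(x))\,dx$, which gives
\begin{equation*}
\int_\rn\vz(x,g_\lambda^\ast(f)(x))\,dx\ls\lf[\sum_{k=0}^{\fz}2^{-kn[(\lambda-1)p/2-(q-p/2)]}\r]\int_\rn\vz(x,S(f)(x))\,dx.
\end{equation*}
The bracketed series converges precisely when $(\lambda-1)p/2>q-p/2$, equivalently $\lambda>2q/p$, which is our hypothesis; this is the single place where the sharp range of $\lambda$ is used.

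\textbf{Step 3 (From modular inequality to norm inequality).} Replacing $f$ by $f/\mu$ for arbitrary $\mu\in(0,\fz)$ (both $g_\lambda^\ast$ and $S$ are positively homogeneous of degree one), the estimate from Step 2 becomes
\begin{equation*}
\int_\rn\vz(x,g_\lambda^\ast(f)(x)/\mu)\,dx\ls\int_\rn\vz(x,S(f)(x)/\mu)\,dx.
\end{equation*}
Setting $\mu:=\|S(f)\|_\lv$ and using Lemma \ref{lem2}(i) to see that the right-hand side equals $1$, I then apply Lemma \ref{lem3}(i) to conclude $\|g_\lambda^\ast(f)\|_\lv\ls\|S(f)\|_\lv$. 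Proposition \ref{s-ch} identifies $\|S(f)\|_\lv\sim\|f\|_{H^\vz(\rn)}$, completing the proof.

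\textbf{Anticipated difficulty.} The combinatorial and analytic steps are standard once the annular decomposition is in place; the only delicate point is bookkeeping the three competing exponents (the $2^{-kn(\lambda-1)/2}$ from the decomposition of $g_\lambda^\ast$, the loss $2^{kn(q-p/2)}$ from Lemma \ref{gx.sbd}, and the gain of $p$ from uniformly lower type) so that the hypothesis $\lambda>2q/p$ emerges exactly as the threshold. Everything else (subadditivity, homogeneity, passage from the modular to the Luxemburg norm) is formal given the results already available in Sections \ref{s2} and \ref{s4}.
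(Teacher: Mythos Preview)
Your proof is correct and follows essentially the same route as the paper's: the same annular decomposition giving $[g_\lambda^\ast(f)]^2\ls\sum_{k\ge0}2^{-kn(\lambda-1)}[S_{2^k}(f)]^2$, then $\sigma$-quasi-subadditivity plus uniformly lower type $p$, then Lemma~\ref{gx.sbd}, with the geometric-series condition yielding exactly $\lambda>2q/p$. Your Step~3 passage from the modular to the norm inequality is slightly more explicit than the paper's, but the argument is the same.
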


\begin{proof}
For all $f\in\hv$ and $x\in\rn$,
\begin{eqnarray}\label{4.gx}
\lf[g_\lambda^*(f)(x)\r]^2&&=\int_{0}^{\infty}
 \int_{|x-y|<t}\lf(\frac{t}{t+|x-y|}\r)^{\lambda
 n}\lf|f\ast\phi_t(y)\r|^2\frac{dy\, dt}{t^{n+1}}\\
 &&\hs+\sum_{k=1}^{\infty}\int_{0}^{\infty}
 \int_{2^{k-1}t\le|x-y|<2^kt}\cdots\noz\\
&&\ls\lf[Sf(x)\r]^2+
 \sum_{k=1}^{\infty}2^{-kn(\lambda-1)}\lf[S_{ 2^k}f(x)\r]^2.\noz
\end{eqnarray}
Then from \eqref{4.gx},
Lemmas \ref{lem1}(i) and \ref{gx.sbd}, and $\lambda\in({2q}/{p},\fz)$, we deduce that
\begin{eqnarray*}
\int_\rn\vz(x,g_\lambda^*(f)(x))\,dx
&&\ls\sum_{k=0}^\fz\int_\rn\varphi\lf(x,2^{-kn\lz/2}S_{2^k}(f)(x)\r)\,dx\\
&&\ls \sum_{k=0}^\infty
 2^{-knp(\lambda-1)/2}2^{kn(q-p/2)}\int_\rn\varphi\lf(x,2^{-kn\lz/2}S_{2^k}(f)(x)\r)\,dx\\
&&\ls\int_\rn\varphi(x,S(f)(x))\,dx.
\end{eqnarray*}
By Lemma \ref{lem2}(i), we see that
\begin{eqnarray*}
\int_\rn\vz\lf(x,\frac{g_\lambda^*(f)(x)}{\|f\|_\hv}\r)\,dx
&&\ls\int_\rn\varphi\lf(x,\frac{S(f)(x)}{\|f\|_\hv}\r)\,dx\\
&&\sim\int_\rn\varphi\lf(x,\frac{S(f)(x)}{\|S(f)\|_\lv}\r)\,dx\sim1,
\end{eqnarray*}
which, together with Lemma \ref{lem3}(i), then completes the proof of
Proposition \ref{gast-bd}.
\end{proof}

By Propositions \ref{van}, \ref{prop3} and \ref{gast-bd},
we have the following $g_\lz^*$-function characterization
of $\hv$. We omit the details.

\begin{thm}\label{gast-ch}
Let $\vz$ be as in Definition \ref{d2.2}, $q\in[1,\fz)$, $\vz\in\aa_q(\rn)$
and $\lambda\in(2q/p,\fz)$.
Then $f\in\hv$ if and only if $f\in\cs'(\rn)$, $f$ vanishes weakly at infinity and $g_\lz^*(f)\in\lv$
and, moreover,
$$\frac1C\|g_\lz^*(f)\|_\lv\le\|f\|_\hv\le C\|g_\lz^*(f)\|_\lv$$
with $C$ being a positive constant independent of $f$.
\end{thm}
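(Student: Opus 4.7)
The plan is to recognize that Theorem \ref{gast-ch} is a direct assembly of three propositions already in place, namely Propositions \ref{van}, \ref{prop3}, and \ref{gast-bd}; the real technical content has been established earlier, so the only task here is a clean bookkeeping of which proposition supplies which implication.

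For the forward implication, I would start from $f\in H^\varphi(\mathbb R^n)$. The inclusion $f\in\mathcal S'(\mathbb R^n)$ is built into the definition of $H^\varphi(\mathbb R^n)$; Proposition \ref{van} then supplies that $f$ vanishes weakly at infinity; and Proposition \ref{gast-bd}, whose hypotheses ($\varphi\in\mathbb A_q(\mathbb R^n)$ and $\lambda\in(2q/p,\infty)$) are exactly those of the present theorem, yields both $g_\lambda^*(f)\in L^\varphi(\mathbb R^n)$ and the quantitative bound $\|g_\lambda^*(f)\|_{L^\varphi(\mathbb R^n)}\le C\|f\|_{H^\varphi(\mathbb R^n)}$.

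For the reverse implication, I would suppose $f\in\mathcal S'(\mathbb R^n)$ vanishes weakly at infinity and $g_\lambda^*(f)\in L^\varphi(\mathbb R^n)$. Since $p\in(0,1]$ and $q\in[1,\infty)$ force $2q/p\ge 2>1$, the hypothesis $\lambda\in(2q/p,\infty)$ of the theorem is strictly stronger than the hypothesis $\lambda\in(1,\infty)$ required by Proposition \ref{prop3}. Thus Proposition \ref{prop3} applies and immediately delivers both $f\in H^\varphi(\mathbb R^n)$ and the reverse estimate $\|f\|_{H^\varphi(\mathbb R^n)}\le C\|g_\lambda^*(f)\|_{L^\varphi(\mathbb R^n)}$. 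Concatenating the two bounds gives the claimed norm equivalence.

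There is no genuine obstacle at the level of this theorem itself; the subtlety worth recording is simply that the lower threshold $2q/p$ for $\lambda$ is the sharp one dictated by Lemma \ref{gx.sbd}, which controls $S_\alpha(f)$ by $S(f)$ with precisely the exponent $\alpha^{n(q-p/2)}$. It is this exponent, together with the dyadic splitting \eqref{4.gx} of $g_\lambda^*$ into pieces weighted by $2^{-kn(\lambda-1)}$, that pins down the range $\lambda>2q/p$ and that, by the authors' remark at the end of Section \ref{s1}, matches the best-known range in the unweighted and weighted $H^p$ cases.
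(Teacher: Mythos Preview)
Your proposal is correct and matches the paper's approach exactly: the paper itself simply cites Propositions \ref{van}, \ref{prop3}, and \ref{gast-bd} and omits the details. Your added observation that $\lambda>2q/p\ge 2>1$ ensures the hypothesis of Proposition \ref{prop3} is a nice clarification that the paper leaves implicit.
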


We point out that the range of $\lz$
in Theorem \ref{gast-ch} is the known best possible,
even when $\vz(x,t):=t^p$ for all $x\in\rn$ and $t\in(0,\fz)$,
or $\vz(x,t):=w(x)t^p$ for all $x\in\rn$ and $t\in(0,\fz)$,
with $q\in[1,\fz)$ and $w\in A_q(\rn)$; see, respectively,
\cite[p.\,221,\,Corollary (7.4)]{fs82} and \cite[Theorem 2]{as77}.


\medskip

\noindent \emph{Yiyu Liang} $\&$ \emph{Dachun Yang} (Corresponding author)

\smallskip

\noindent School of Mathematical Sciences, Beijing Normal University,
Laboratory of Mathematics and Complex Systems, Ministry of
Education, Beijing 100875, People's Republic of China

\smallskip

\noindent E-mails: \emph{yyliang@mail.bnu.edu.cn} $\&$ \emph{dcyang@bnu.edu.cn}

\medskip

\noindent \emph{Jizheng Huang}

\smallskip

\noindent College of Sciences, North
China University of
Technology, Beijing 100144, People's Republic of China

\smallskip

\noindent E-mail: \emph{hjzheng@163.com}


\begin{thebibliography}{99}

\bibitem{as77} N. Aguilera and C. Segovia,
Weighted norm inequalities relating the $g^*_{\lambda }$ and the area functions,
Studia Math. 61 (1977), 293-303.

\vspace{-0.3cm}

\bibitem{aj80} K. Andersen and R. John,
Weighted inequalities for vector-valued maximal functions and singular integrals,
Studia Math. 69 (1980/81), 19-31.

\vspace{-0.3cm}

\bibitem{aikm00} K. Astala, T. Iwaniec, P. Koskela and G. Martin,
Mappings of BMO-bounded distortion,
Math. Ann. 317 (2000), 703-726.

\vspace{-0.3cm}

\bibitem{bo31} Z. Birnbaum and W. Orlicz,
\"{U}ber die verallgemeinerung des begriffes der zueinander konjugierten potenzen,
Studia Math. 3 (1931), 1-67.

\vspace{-0.3cm}

\bibitem{bfg10} A. Bonami, J. Feuto, and S. Grellier,
Endpoint for the DIV-CURL lemma in Hardy spaces,
Publ. Mat. 54 (2010), 341-358.

\vspace{-0.3cm}

\bibitem{bg10} A. Bonami and S. Grellier,
Hankel operators and weak factorization for Hardy-Orlicz spaces,
Colloq. Math. 118 (2010), 107-132.

\vspace{-0.3cm}

\bibitem{bgk} A. Bonami, S. Grellier and L. D. Ky,
Paraproducts and products of functions in $BMO(\mathbb R^n)$ and $H^1(\mathbb R^n)$ through wavelets,
J. Math. Pure Appl. 97 (2012), 230-241.

\vspace{-0.3cm}

\bibitem{bijz07} A. Bonami, T. Iwaniec, P. Jones and M. Zinsmeister,
On the product of functions in $BMO$ and $H^1$,
Ann. Inst. Fourier (Grenoble) 57 (2007), 1405-1439.

\vspace{-0.3cm}

\bibitem{cw71} R. R. Coifman and G. Weiss,
Analyse Harmonique Non-commutative sur Certains Espaces Homog\`{e}nes,
Lecture Notes in Math. 242, Springer, Berlin, 1971.

\vspace{-0.3cm}

\bibitem{d05} L. Diening,
Maximal function on Musielak-Orlicz spaces and generalized Lebesgue spaces,
Bull. Sci. Math. 129 (2005), 657-700.

\vspace{-0.3cm}

\bibitem{dhr09} L. Diening, P. H\"ast\"o and S. Roudenko,
Function spaces of variable smoothness and integrability,
J. Funct. Anal. 256 (2009), 1731-1768.

\vspace{-0.3cm}

\bibitem{fs71} C. Fefferman and E. M. Stein,
Some maximal inequalities,
Amer. J. Math. 93 (1971), 107-115.

\vspace{-0.3cm}

\bibitem{fs82} G. B. Folland and E. M. Stein,
Hardy spaces on homogeneous groups,
Princeton Univ. Press, Princeton, 1982.

\vspace{-0.3cm}

\bibitem{g79} J. Garc\'ia-Cuerva,
Weighted $H^p$ spaces,
Dissertationes Math. (Rozprawy Mat.) 162 (1979), 1-63.

\vspace{-0.3cm}

\bibitem{gr85} J. Garc\'{i}a-Cuerva and J. Rubio de Francia,
Weighted Norm Inequalities and Related Topics,
Amsterdam, North-Holland, 1985.

\vspace{-0.3cm}

\bibitem{g09} L. Grafakos,
Modern Fourier Analysis, Second edition,
Graduate Texts in Mathematics 250, Springer, New York, 2009.

\vspace{-0.3cm}

\bibitem{hyy11} S. Hou, D. Yang and S. Yang,
Lusin area function and molecular characterizations of Musielak-Orlicz Hardy spaces and their applications,
arXiv: 1201.1945v3.

\vspace{-0.3cm}

\bibitem{io02} T. Iwaniec and J. Onninen,
$\mathcal{H}^1$-estimates of Jacobians by subdeterminants,
Math. Ann. 324 (2002), 341-358.

\vspace{-0.3cm}

\bibitem{jn87} R. Johnson and C. J. Neugebauer,
Homeomorphisms preserving $A_p$,
Rev. Mat. Iberoamericana 3 (1987), 249-273.

\vspace{-0.3cm}

\bibitem{j80} S. Janson,
Generalizations of Lipschitz spaces and an application to Hardy spaces and bounded mean oscillation,
Duke Math. J. 47 (1980), 959-982.

\vspace{-0.3cm}

\bibitem{jy11} R. Jiang and D. Yang,
Orlicz-Hardy spaces associated with operators satisfying Davies-Gaffney estimates,
Commun. Contemp. Math. 13 (2011), 331-373.

\vspace{-0.3cm}

\bibitem{jy10} R. Jiang and D. Yang,
New Orlicz-Hardy spaces associated with divergence form elliptic operators,
J. Funct. Anal. 258 (2010), 1167-1224.

\vspace{-0.3cm}

\bibitem{jyz09} R. Jiang, D. Yang and Y. Zhou,
Orlicz-Hardy spaces associated with operators,
Sci. China Ser. A 52 (2009), 1042-1080.

\vspace{-0.3cm}

\bibitem{ky} L. D. Ky,
New Hardy spaces of Musielak-Orlicz type and boundedness of sublinear operators,
arXiv: 1103.3757.

\vspace{-0.3cm}

\bibitem{ky2} L. D. Ky,
Bilinear decompositions and commutators of singular integral operators,
Trans. Amer. Math. Soc. (to appear) or arXiv: 1105.0486.

%

%

\vspace{-0.3cm}

\bibitem{lz11} G. Lu and Y. Zhu,
Bounds of Singular Integrals on Weighted Hardy Spaces and Discrete Littlewood-Paley Analysis,
J. Geom. Anal. DOI 10.1007/s12220-010-9209-1.

\vspace{-0.3cm}

\bibitem{mw08} S. Mart\'{i}nez and N. Wolanski,
A minimum problem with free boundary in Orlicz spaces,
Adv. Math. 218 (2008), 1914-1971.

\vspace{-0.3cm}

\bibitem{m83} J. Musielak,
Orlicz Spaces and Modular Spaces,
Lecture Notes in Math. 1034, Springer-Verlag, Berlin, 1983.

\vspace{-0.3cm}

\bibitem{ny85} E. Nakai and K. Yabuta,
Pointwise multipliers for functions of bounded mean oscillation,
J. Math. Soc. Japan 37 (1985), 207-218.

\vspace{-0.3cm}

\bibitem{o32} W. Orlicz,
\"{U}ber eine gewisse Klasse von R¨aumen vom Typus B,
Bull. Int. Acad. Pol. Ser. A 8 (1932), 207-220.

\vspace{-0.3cm}

\bibitem{rr91} M. M. Rao and Z. D. Ren,
Theory of Orlicz Spaces,
Monographs and Textbooks in Pure and Applied Mathematics, 146, Marcel Dekker, Inc., New York, 1991.

\vspace{-0.3cm}

\bibitem{rr02} M. M. Rao and Z. D. Ren,
Applications of Orlicz Spaces,
Monographs and Textbooks in Pure and Applied Mathematics, 250, Marcel Dekker, Inc., New York, 2002.

\vspace{-0.3cm}

\bibitem{r01} V. S. Rychkov,
Littlewood-Paley theory and function spaces with $A^{\loc}_p$ weights,
Math. Nachr. 224 (2001), 145-180.

\vspace{-0.3cm}

\bibitem{s79} J.-O. Str\"{o}mberg,
Bounded mean oscillation with Orlicz norms and duality of Hardy spaces,
Indiana Univ. Math. J 28 (1979), 511-544.

\vspace{-0.3cm}

\bibitem{st89} J.-O. Str\"{o}mberg and A. Torchinsky,
Weighted Hardy Spaces,
Lecture Notes in Math. 1381, Springer-Verlag, Berlin, 1989.

\vspace{-0.3cm}

\bibitem{v87} B. E. Viviani,
An atomic decomposition of the predual of $BMO(\rho)$,
Rev. Mat. Ibero. 3 (1987), 401-425.

%

%

\vspace{-0.3cm}

\bibitem{dcyy} D. Yang and S. Yang,
Local Hardy spaces of Musielak-Orlicz type and their applications,
Sci. China Math. 2012, doi: 10.1007/s11425-012-4377-z or arXiv: 1108.2797.
\end{thebibliography}
\end{document}